\definecolor{bluecite}{HTML}{0875b7}
\newtheorem{example}{Example}[section]
\newtheorem{proposition}{Proposition}[section]
\newtheorem{theorem}{Theorem}[section]
\newtheorem{lemma}{Lemma}[section]
\newtheorem{corollary}{Corollary}[section]
\newtheorem{remark}{Remark}[section]
\numberwithin{equation}{section}
\address{\textsc{Zolt\'an Balogh}: 
	Mathematisches Institut, University of Bern, Sidlerstrasse 12, 3012, Bern, Switzerland.}
	\email{zoltan.balogh@unibe.ch}
\address{\textsc{Sebastiano Don}:
Mathematisches Institut, University of Bern, Sidlerstrasse 12, 3012, Bern, Switzerland.}
\email{sebastiano.don@unibe.ch}
\address{\textsc{Alexandru Krist\'aly}: 
Department of Economics, Babe\c s-Bolyai University, Cluj-Napoca, Romania  \& Institute of Applied Mathematics, \'Obuda
	University, 
	Budapest, Hungary 
}
\email{alex.kristaly@econ.ubbcluj.ro; kristaly.alexandru@uni-obuda.hu
}
\subjclass[]{ 
	28A25, 
	26D15,
	46E35, 
	49Q22
	}
\keywords{Gagliardo-Nirenberg inequalities, weights, optimal mass transport, sharpness, rigidity.}
\thanks{Z. M. Balogh and S. Don were
	supported by the Swiss National Science Foundation, Grant Nr. {200020\_191978}.  
	A. Krist\'aly  was supported by the Excellence Researcher Program \'OE-KP-2-2022 of  \'Obuda University, Hungary, and by the 
	UEFISCDI/CNCS grant PN-III-P4-ID-PCE2020-1001, Romania.
}
\title[Sharp weighted GN inequalities]{Weighted Gagliardo-Nirenberg inequalities via Optimal Transport Theory and Applications}
\date{\today}
\author[Z.\ M.\ Balogh, S.\ Don and A.\ Krist\'aly]{Zolt\'an M. Balogh, Sebastiano Don and Alexandru Krist\'aly}
\begin{document}
	\begin{abstract}
		We prove Gagliardo-Nirenberg inequalities with three  weights -- verifying a joint concavity condition --   on open convex cones of $\mathbb R^n$. If the weights are equal to each other the inequalities become sharp and we compute explicitly the sharp constants. For a certain range of parameters we can characterize the class of extremal functions; in this case, we also show that the sharpness in the main three-weighted Gagliardo-Nirenberg inequality implies that the weights must be equal up to some constant multiplicative factors. 
		Our approach uses optimal mass transport theory and a careful analysis of the joint concavity condition of the weights. As applications we establish sharp weighted $p$-log-Sobolev, Faber-Krahn and isoperimetric inequalities with explicit sharp constants. 
	\end{abstract}
	\maketitle 
	\tableofcontents

\vspace{-1cm}
\section{Introduction}

Schwarz-type symmetrization techniques, that are instrumental e.g.\ in the P\'olya-Szeg\H o inequality, are the classical tools to prove Sobolev-type inequalities both in Euclidean and in curved spaces, see Talenti \cite{Talenti}, Aubin \cite{Aubin} and Hebey \cite{Hebey}. More recently,  further methods have been provided to prove such functional inequalities, even in more general settings involving weights and normed spaces. The first remarkable  result in this direction is attributed to Cordero-Erausquin, Nazaret and Villani \cite{CENV}, who proved sharp Sobolev and Gagliardo-Nirenberg inequalities in $\mathbb R^n$ via  optimal mass transport theory (for short, OMT). A nice exposition of this method can be found in the book of Villani \cite{Villani}. An alternative method is provided by Cabr\'e,  Ros-Oton and Serra \cite{Cabre-Ros-Oton, Cabre-Ros-Oton-Serra}, where sharp weighted Sobolev-type inequalities are established  by the ABP-method. In fact, in \cite{Cabre-Ros-Oton} the weights are considered to be monomials defined on certain convex open cones of $\mathbb R^n$, arising from the theory of reaction-diffusion problems in domains with symmetry of double revolution. 

More recently, by using OMT,  Lam  \cite{Lam} established the validity of the following one-weighted version of the  Gagliardo-Nirenberg inequality  \begin{equation}\label{eq:Lam1-0}
	\left(\int_E |u|^{\alpha p}\omega dx\right)^{\frac{1}{\alpha p}}\leq  C_1 \left(\int_E|\nabla u|^p\omega dx\right)^{\frac{\theta_1}{p}}\left(\int_E|u|^{\alpha p \gamma}\omega dx\right)^{\frac{1-\theta_1}{\alpha p \gamma}},\ \ u\in C_c^\infty(\mathbb R^n),
\end{equation} 
together with its dual, where $C_1>0$,  $1-\frac{1}{n+\tau}\leq \gamma < 1$,  $1<p<n+\tau$, $\alpha=\frac 1{p(\gamma-1)+1}$ and $\theta_1=\frac{(n+\tau)(1-\gamma)}{\alpha \gamma (p-\tau-n)+n+\tau}$. Here, $E\subseteq \mathbb R^n$ is an open convex cone and  $\omega\colon E\to (0,+\infty)$ is a homogeneous weight  with degree $\tau\geq 0$ verifying the condition 
\begin{equation}\label{Lam-1-0}
	\frac{1}{1-\gamma}\left(\frac{\omega(\nabla \varphi(x))}{\omega(x)}\right)^{1-\gamma}({\rm det}(M))^{1-\gamma}\leq \frac{1}{1-\gamma}-(\tau+n)+\frac{ \nabla \omega(x)\cdot\nabla \varphi(x)}{\omega (x)}+{\rm tr}(M),
\end{equation}
for  any positive definite symmetric matrix $M$ and locally Lipschitz function $\varphi$ with $\nabla \varphi(x)\in \overline E$ for any $x\in \overline E$ and $\nabla \varphi\cdot \textbf{n}\leq 0$ on $\partial E$. Hereafter, $ \textbf{n}(x)$ stands for the outer normal vector at $x\in \partial E$.  A closer analysis shows that particular cases of  Lam's Gagliardo-Nirenberg inequality  \eqref{eq:Lam1-0}    provide the same (sharp) results of Cordero-Erausquin, Nazaret and Villani \cite{CENV} in the unweighted case, and those of Cabr\'e,  Ros-Oton and Serra \cite{Cabre-Ros-Oton, Cabre-Ros-Oton-Serra} and Nguyen \cite{Nguyen} for monomial weights of the form $\omega(x)=x_1^{a_1}\dots x_n^{a_n}$ on $E=\{x=(x_1,\dots,x_n)\in \mathbb R^n: x_i>0\ {\rm whenever}\ a_i>0\}$. 

We shall see in section \S \ref{sec:Preliminar-0}, that the condition \eqref{Lam-1-0} can be formulated in a simpler form. It  is in fact equivalent to the  $\frac{1-\gamma}{1-n(1-\gamma)}$-concavity of $\omega$ on $E$,  and to the inequality:

\begin{equation}\label{Lam-equiv}
		\left(\frac{1}{1-\gamma}-n\right)\left(\frac{\omega(y)}{\omega(x)}\right)^\frac{1-\gamma}{1-n(1-\gamma)}\leq \frac{1}{1-\gamma}-(n+\tau)+\frac{ \nabla \omega(x)\cdot y}{\omega (x)}, \ x, y  \in E.
		\end{equation}

Due to this observation,  the Gagliardo-Nirenberg inequality \eqref{eq:Lam1-0} can be related to the weighted Sobolev inequality proved by  Ciraolo, Figalli and  Roncoroni \cite{CFR} for a homogeneous and  $1/\tau$-concave weight $\omega$ on $E$, where $\frac 1\tau =\frac{1-\gamma}{1-n(1-\gamma)}$. 

The goal of the present paper is to consider Gagliardo-Nirenberg-type inequalities with \textit{different weights}. Such inequalities are motivated by applications in mathematical physics and sub-Riemannian geometry. Indeed, in order to study fast diffusion problems, Bonforte, Dolbeault,  Muratori and Nazaret \cite{Bonforte-etal-1, Bonforte-etal-2, Dolbeault-etal} established Gagliardo-Nirenberg inequalities with different power-law weights. Furthermore, Sobolev-type inequalities with two different weights appear both in reaction-diffusion equations, see Cabr\'e,  Ros-Oton and Serra \cite{Cabre-Ros-Oton, Cabre-Ros-Oton-Serra} and Castro \cite{Castro}, and in the axially symmetric reduction of sub-Riemannian Sobolev-type inequalities, see  Balogh, Guti\'errez and Krist\'aly \cite{BGK_PLMS}. 

Recall that in \cite{BGK_PLMS} a \textit{two-weighted}  Sobolev-type inequality has been established. To formulate this result, denote by $E\subseteq \mathbb R^n$ an open convex cone and  consider two homogeneous weights $\omega_1,\omega_2\colon E \to (0,+\infty)$ with degrees $\tau_1$ and $\tau_2$, respectively. Let $q$ be defined by 
\[
\frac{\tau_1+n}{q}=\frac{\tau_2+n}{p}-1;
\]
Assume, that  $\tau_2>(1-\frac pn)\tau_1$, and define the fractional dimension $n_\tau\geq n$ as given by  $\frac 1n_\tau=\frac 1p-\frac 1q$. 
The main result of \cite{BGK_PLMS} states that if there exists a constant $C_{0}>0$ such that the joint concavity condition 
\begin{equation} \label{PLMS-cond}
\left(\left(\frac{\omega_2(y)}{\omega_2(x)}\right)^{\frac 1p}\left(\frac{\omega_1(x)}{\omega_1(y)}\right)^{\frac1q}\right)^{\frac{n_\tau}{n_\tau-n}}\leq C_0 \left(\frac1{p'} \frac{\nabla \omega_1(x)}{\omega_1(x)}+\frac{1}{p}\frac{\nabla \omega_2(x)}{\omega_2(x)}\right)\cdot y,\ \  \forall x,y\in E
\end{equation}
is satisfied,  then there exists a constant $C_{1}>0$ such that the  following two-weighted Sobolev inequality holds: 
\begin{equation}\label{eq:PLMS}
\left(\int_E |u|^{q}\omega_{1} dx\right)^{\frac{1}{q}}\leq C_1 \left(\int_E|\nabla u|^p\omega_{2} dx\right)^{\frac{1}{p}} ,  u\in C_c^\infty(\mathbb R^n).
\end{equation}

 The aim of the present paper is to provide a common extension of the two conditions \eqref{Lam-equiv} and \eqref{PLMS-cond} and their consequences \eqref{eq:Lam1-0} and \eqref{eq:PLMS}, respectively. More precisely we consider Gagliardo-Nirenberg inequalities for  \textit{three}   homogeneous  weights $\omega_i\colon E\to (0,+\infty)$ of class $\mathcal C^1$, $i\in \{1,2,3\},$ having their degree of homogeneity $\tau_i>-n$, (i.e.\ $\omega_{i}(tx) = t^{\tau_{i}}\omega_{i}(x)$ for $t>0, x \in E$) , where $E\subseteq \mathbb R^n$ is an open convex cone, $n\geq 2$. 
 To do this, we formulate the condition connecting these weights, which plays the central role in our study.\\ 

\noindent {\sc Main Condition}: \textit{Let  $\gamma> 1-\frac 1n$ with $\gamma\neq 1$ and $1<p < \infty$. The triplet $(\omega_1,\omega_2,\omega_3)$  satisfies  {\rm  condition (C) on $E$} if there exist $K\in \mathbb R$  and $C_0>0$ such that
\begin{equation}\label{eq:conditionC}
\begin{aligned}
	\left(\frac 1{1-\gamma}-n\right)\left(\left(\frac{\omega_2(y)}{\omega_2(x)}\right)^{\frac 1p}\left(\frac{\omega_1(y)}{\omega_1(x)}\right)^{\frac1{p'}-\gamma}\right)^{\frac1{1-n(1-\gamma)}}\leq& \left(\frac 1{1-\gamma}+K\right)\frac{\omega_3(x)}{\omega_1^{1/{p'}}(x)\omega_2^{1/p}(x)}\\
	&+C_0\left(\frac 1{p}\frac{\nabla\omega_2(x)}{\omega_2(x)}+\frac{1}{p'}\frac{\nabla\omega_1(x)}{\omega_1(x)}\right)\cdot y,
	\end{aligned}
\end{equation}
for every $x,y\in E$.}\\
 
 Hereafter, $p'=p/(p-1)$ is the conjugate of $p>1$, and ``$\cdot$'' stands for the usual inner product in $\mathbb R^n$. Various examples verifying condition (C)  will be discussed in Section \ref{sec:Preliminar-0}. 

 Notice that \eqref{eq:conditionC} is a natural extension of \eqref{Lam-equiv} and \eqref{PLMS-cond} when three weights are taken into account. More precisely, 
	\begin{itemize}
		\item if $\omega_1=\omega_2=\omega_3=:\omega$ in \eqref{eq:conditionC}, $K=-n-\tau$ and $C_0=1$, we recover \eqref{Lam-equiv}, which is equivalent to the fact that the triplet $(E,|\cdot|,\omega)$ verifies the curvature-dimension condition $CD(0,n+\tau)$;
		\item if $\gamma<1$ and $K=-\frac1{1-\gamma}$, from \eqref{eq:conditionC} we obtain \eqref{PLMS-cond} with the identities
		$
		\frac 1q=\gamma-\frac1{p'}$, $ \frac 1{n_\tau}=1-\gamma=\frac 1p-\frac 1q.
		$
	\end{itemize}

Let us begin with some notations. Let $p>1$,  $\alpha>0$ and $\gamma>0$ be such that 
$\alpha=\frac 1{p(\gamma-1)+1}.$ We denote by  $L^p(\omega;E)= \{ u: E \to \mathbb R:  \int_{E} |u(x)|^{p}\omega(x) dx < +\infty \} $ and introduce the weighted Sobolev space  
\begin{equation}\label{W-p-q-0}
	\dot{W}^{p,\alpha }(\omega_1,\omega_2;E):=\{u\in L^{\alpha p}(\omega_1;E):|\nabla u| \in L^p(\omega_2;E)\},
\end{equation}
and
\[
\mathcal G_{\omega_1,\omega_2}\coloneqq\left\{G\colon E\to [0,+\infty): G\in L^1(\omega_1;E)\cap L^\gamma(\omega_1^\frac{1}{p'}\omega_2^\frac{1}{p};E),\ \int_E G(y)|y|^{p'}\omega_1(y)dy<+\infty\right\}.
\]
To formulate our first main result we introduce the following constants
\begin{equation}\label{L-M}
	L:=-(n+\tau_1)\gamma+n+\tau_3, \ M:=p+\frac{n+\tau_1}{\alpha}-(n+\tau_2),
\end{equation}
\begin{equation}\label{C-1-1-constant-initial}
	C_{K,L,M,C_0}=\left(\frac{1}{1-\gamma}+K\right)^\frac{M}{p}C_0^{L-(1-\gamma)n\left(\frac{M}{p}+L\right)}(\gamma \alpha)^L(1-\gamma)^{\frac{M}{p}+L}\frac{(M+pL)^{\frac{M}{p}+L}}{M^\frac{M}{p}L^L},
\end{equation}
and 
\begin{equation}\label{C-1-2-constant-initial}
	C_{\mathcal G_{\omega_1,\omega_2}}=\inf_{G\in \mathcal G_{\omega_1,\omega_2};\int_E G\omega_1=1}\frac{\displaystyle\left(\int_E G(y)|y|^{p'}\omega_1(y)dy\right)^\frac{L}{p'}}{\displaystyle\left(\int_E G^\gamma(y)\omega_1^\frac{1}{p'}(y)\omega_2^\frac{1}{p}(y)dy\right)^{\frac{M}{p}+L}}.
\end{equation}

Our main Gagliardo-Nirenberg inequalities with three weights will be proved by the OMT method and are stated as follows; we first deal with the case $\gamma<1$:


\begin{theorem}\label{th:CKN2weights} $(\gamma<1)$ Let $n\geq 2$, $1<p< \infty$, $1>\gamma>\max\{1-1/n,1/{p'}\}$ and $E\subseteq \mathbb R^n$ be an open convex cone, and  $\omega_i\colon E\to (0,+\infty)$ be homogeneous weights  with degree $\tau_i>-n$ and of class $\mathcal C^1$, $i\in \{1,2,3\}$, such that the triplet  $(\omega_1,\omega_2,\omega_3)$ satisfies  condition \textbf{\rm (C)} on $E$. 
	Let $\alpha=\frac 1{p(\gamma-1)+1}$ and assume that $L>0$ and $M\geq 0$. 
	 Then if $ \frac{1}{1-\gamma}+K >0$ we have 
		\begin{equation}\label{eq:CKN2weights}
			\left(\int_E |u|^{\alpha p}\omega_1dx\right)^{\frac{1}{\alpha p}}\leq {\sf C}_1 \left(\int_E|\nabla u|^p\omega_2 dx\right)^{\frac{\theta_1}{p}}\left(\int_E|u|^{\alpha p \gamma}\omega_3 dx\right)^{\frac{1-\theta_1}{\alpha p \gamma}},\ \forall u\in \dot{W}^{p,\alpha}(\omega_1,\omega_2;E),
		\end{equation} 
	 where  
	 \begin{equation}\label{C-1-optimal}
	 	 {\sf C}_1=\left(C_{K,L,M,C_0}C_{\mathcal G_{\omega_1,\omega_2}}\right)^\frac{1}{\alpha\gamma M+L} \  and\  \theta_1=\frac{L}{\alpha\gamma M+L}\in (0,1],
	 \end{equation}
If $\frac{1}{1-\gamma}+K =0$ we have that $M=0$ and the above formula still holds by using the convention $0^0=1$.

\end{theorem}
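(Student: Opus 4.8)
The plan is to run the optimal mass transport scheme of Cordero-Erausquin, Nazaret and Villani, in the weighted form used by Lam and by Ciraolo--Figalli--Roncoroni, with the three-weight condition \textbf{(C)} playing the role of the curvature-dimension inequality. First I would reduce to $u\ge 0$, bounded and compactly supported in $\overline E$, by replacing $u$ with $|u|$ and using a density argument in $\dot W^{p,\alpha}(\omega_1,\omega_2;E)$. Since both sides of \eqref{eq:CKN2weights} are homogeneous of degree $1$ in $u$ (the left side scales as $\lambda$, the right side as $\lambda^{\theta_1}\lambda^{1-\theta_1}=\lambda$), I may normalize $\int_E|u|^{\alpha p}\omega_1\,dx=1$. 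Set $F:=|u|^{\alpha p}$ and fix an arbitrary profile $G\in\mathcal G_{\omega_1,\omega_2}$ with $\int_E G\,\omega_1\,dx=1$. By McCann's theorem there is a convex potential $\varphi$ whose gradient $T=\nabla\varphi$ pushes $F\omega_1\,dx$ forward to $G\omega_1\,dx$; because $E$ is an open convex cone, $T(E)\subseteq\overline E$ and $\nabla\varphi\cdot\mathbf n\le 0$ on $\partial E$ in the a.e.\ sense, and the weighted Monge--Amp\`ere equation $F(x)\omega_1(x)=G(T(x))\,\omega_1(T(x))\,\det D_A^2\varphi(x)$ holds a.e., with $D_A^2\varphi$ the Alexandrov Hessian.

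The next step brings in condition \textbf{(C)}. As established in Section \ref{sec:Preliminar-0}, the matrix-free inequality \eqref{eq:conditionC} is equivalent to a matrix inequality valid for every symmetric positive-definite $M$ (the three-weight analogue of the passage from Lam's Jacobian form to \eqref{Lam-equiv}), in which the left side carries a power of $\det M$ and the right side an additional term $C_0\,\mathrm{tr}\,M$. I would apply that matrix form at the point $y=\nabla\varphi(x)$ with $M=D_A^2\varphi(x)$, and substitute $\det D_A^2\varphi$ from the Monge--Amp\`ere equation. Reintroducing $\det M$ and $\mathrm{tr}\,M$ in this way is precisely what makes the subsequent change of variables close up: the $\det M$ factors convert the weight-ratio term into the profile functional, while the $\mathrm{tr}\,M=\Delta_A\varphi$ term is what will be integrated by parts. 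The outcome is a pointwise inequality whose right side is $\bigl(\tfrac1{1-\gamma}+K\bigr)\,\omega_3\,\omega_1^{-1/p'}\omega_2^{-1/p}$ plus $C_0\bigl(\tfrac1p\tfrac{\nabla\omega_2}{\omega_2}+\tfrac1{p'}\tfrac{\nabla\omega_1}{\omega_1}\bigr)\cdot\nabla\varphi+C_0\,\Delta_A\varphi$.

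I would then integrate this pointwise inequality over $E$ against the weight $|u|^{\alpha p\gamma}\omega_1^{1/p'}\omega_2^{1/p}\,dx$, writing $w:=\omega_1^{1/p'}\omega_2^{1/p}$. The key algebraic identity $\tfrac1p\tfrac{\nabla\omega_2}{\omega_2}+\tfrac1{p'}\tfrac{\nabla\omega_1}{\omega_1}=\tfrac{\nabla w}{w}$ makes the divergence combination equal to $\tfrac{C_0}{w}\,\mathrm{div}(w\nabla\varphi)$, so integration by parts moves the derivative onto $|u|^{\alpha p\gamma}$ and produces $\int_E\nabla(|u|^{\alpha p\gamma})\cdot\nabla\varphi\,w\,dx$; the boundary contributions vanish thanks to $\nabla\varphi\cdot\mathbf n\le 0$ and the compact support. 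A weighted H\"older inequality with exponents $p,p'$, splitting $w=\omega_2^{1/p}\cdot\omega_1^{1/p'}$, bounds this by $\bigl(\int_E|\nabla u|^p\omega_2\bigr)^{1/p}\bigl(\int_E|\nabla\varphi|^{p'}|u|^{\alpha p}\omega_1\bigr)^{1/p'}$, where the defining relation $\alpha=\tfrac1{p(\gamma-1)+1}$ gives $(\alpha p\gamma-1)p'=\alpha p$, so that the second factor is exactly the transport cost $\int_E|y|^{p'}G(y)\omega_1(y)\,dy$. Meanwhile the $\det M$-substituted left side becomes, after the change of variables induced by $T$, the profile functional $\int_E G^\gamma\omega_1^{1/p'}\omega_2^{1/p}\,dy$, and the $\omega_3$-term becomes a multiple of $\int_E|u|^{\alpha p\gamma}\omega_3\,dx$. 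Euler's homogeneity identity $\nabla\omega_i(x)\cdot x=\tau_i\omega_i(x)$ and the degrees $\tau_i$ feed in here, and matching the homogeneities is what produces the exponents $L$ and $M$ of \eqref{L-M}; the three moment quantities are precisely those in \eqref{C-1-2-constant-initial}.

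Finally I would optimize. Inserting a one-parameter dilation of the profile $G$ and optimizing over it reproduces the combinatorial factor $\tfrac{(M+pL)^{M/p+L}}{M^{M/p}L^L}$ together with the remaining powers in $C_{K,L,M,C_0}$ of \eqref{C-1-1-constant-initial}; taking the infimum over admissible profiles $G\in\mathcal G_{\omega_1,\omega_2}$ yields $C_{\mathcal G_{\omega_1,\omega_2}}$, and collecting the homogeneities gives the stated $\theta_1=L/(\alpha\gamma M+L)$ and $\mathsf C_1=(C_{K,L,M,C_0}C_{\mathcal G_{\omega_1,\omega_2}})^{1/(\alpha\gamma M+L)}$. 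In the degenerate case $\tfrac1{1-\gamma}+K=0$ one checks $M=0$, and the convention $0^0=1$ makes the formulas continuous. I expect the two main obstacles to be, first, the rigorous integration by parts for the merely Alexandrov-differentiable Brenier potential on the cone (requiring the Cordero-Erausquin--Nazaret--Villani-type control of $\det D_A^2\varphi$ against the distributional Laplacian and a careful treatment of $\partial E$), and second, the exponent bookkeeping that makes the weight homogeneities output exactly $L$, $M$ and the sharp optimization constant rather than merely a finite one.
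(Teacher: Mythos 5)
Your scheme coincides with the paper's own proof up to and including its key inequality \eqref{eq:2.7}: Brenier map between $|u|^{\alpha p}\omega_1\,dx$ and $G\omega_1\,dx$, the Monge--Amp\`ere equation, Lemma \ref{tr-det-inequality} combined with condition (C) at $y=\nabla\phi(x)$, the integration by parts of Proposition \ref{prop:integrationbyparts}, and the H\"older/Young splitting turning $\int_E f^{\alpha p}|\nabla\phi|^{p'}\omega_1\,dx$ into the transport cost. The genuine gap is in your final step: you propose to generate the exponents by inserting a one-parameter dilation of the \emph{profile} $G$, whereas the paper keeps $G$ fixed and dilates the \emph{function} $u$, replacing $u$ by $u_\lambda(x)=\lambda^{(n+\tau_1)/(\alpha p)}u(\lambda x)$ and optimizing in $\lambda$ (see \eqref{eq:f_lambda}). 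These two dilations are not interchangeable, and only the second one can output the stated $\theta_1$.

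The reason is that $\theta_1=\frac{L}{\alpha\gamma M+L}$ genuinely depends on $\tau_3$ (through $L=n+\tau_3-\gamma(n+\tau_1)$), while every exponent your optimization can produce is a function of $(n,p,\gamma,\tau_1,\tau_2)$ only: the two $G$-moments in \eqref{eq:2.7} involve only $\omega_1,\omega_2$, so under the mass-preserving dilation $G_\lambda(y)=\lambda^{n+\tau_1}G(\lambda y)$ one computes
\begin{equation*}
\int_E G_\lambda^\gamma\,\omega_1^{1/p'}\omega_2^{1/p}\,dy=\lambda^{a}\int_E G^\gamma\,\omega_1^{1/p'}\omega_2^{1/p}\,dy,
\qquad
\int_E G_\lambda\,|y|^{p'}\omega_1\,dy=\lambda^{-p'}\int_E G\,|y|^{p'}\omega_1\,dy,
\end{equation*}
with $a=\gamma(n+\tau_1)-n-\frac{\tau_1}{p'}-\frac{\tau_2}{p}$, and no $\tau_3$ appears anywhere. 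Carrying out the $\lambda$- and H\"older/Young-optimization on \eqref{eq:2.7} then bounds the profile functional, for normalized $u$, by a constant times $\left(\int_E|u|^{\alpha p\gamma}\omega_3\,dx\right)^{1+a}\left(\int_E|\nabla u|^p\omega_2\,dx\right)^{-a/p}$. A short computation using $1/\alpha=p(\gamma-1)+1$ gives $M=p(1+a)$ and $-a=L-\delta$, where $\delta=\tau_3-\frac{\tau_1}{p'}-\frac{\tau_2}{p}$; so the gradient-to-$\omega_3$ exponent ratio you obtain is $(L-\delta):M$, while \eqref{eq:CKN2weights} raised to the power $\alpha p$ requires the ratio $L:M$. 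These agree only when $\delta=0$, i.e.\ $\tau_3=\frac{\tau_1}{p'}+\frac{\tau_2}{p}$ --- an identity that condition (C) forces when $\gamma>1$ and that happens to hold in the paper's monomial example, but which Theorem \ref{th:CKN2weights} ($\gamma<1$) does not assume: it only requires $L>0$ and $M\geq 0$. Thus in general your bookkeeping yields exponents that do not match \eqref{eq:CKN2weights}, and since only the ratio can be adjusted by raising the inequality to a power, the discrepancy cannot be repaired; the fix is exactly the paper's $u$-dilation, under which the $\omega_3$-term scales as $\lambda^{-L}$ and the gradient term as $\lambda^{M}$, and whose optimization (followed by the choice \eqref{mu-choice} of the Young parameter) produces $\theta_1$ and $C_{K,L,M,C_0}$. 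The same $u$-dilation is also what the paper uses to prove your parenthetical claim that $\frac{1}{1-\gamma}+K=0$ forces $M=0$. Two minor further points: your opening reduction to compactly supported $u$ via density in $\dot W^{p,\alpha}(\omega_1,\omega_2;E)$ is not established anywhere (the paper deliberately works with Sobolev functions directly through Proposition \ref{prop:integrationbyparts}, which is also what later permits the rigidity analysis), and the ``matrix form'' equivalence of condition (C) you invoke is only proved in the paper for a single weight (Proposition \ref{Lam-equivalence}); what the proof actually needs is just the one-directional combination of Lemma \ref{tr-det-inequality} with \eqref{eq:conditionC}, as in \eqref{eq:from21toC}.
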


\noindent We notice that when  $\displaystyle \int_E|u|^{\alpha p \gamma}\omega_3 dx=+\infty$ in \eqref{eq:CKN2weights}, we have nothing to prove in \eqref{eq:CKN2weights}; thus, we may consider without loss of generality that it is finite. Note, that in the limit case $K=-\frac{1}{1-\gamma}$ (and consequently,  $M=0$ and $\theta_1=1$),  $\omega_3$ disappears from condition (C). In this case, setting  with $\gamma=1-\frac{1}{n_\tau}$,  where $n_\tau$ is the fractional dimension, Theorem \ref{th:CKN2weights} reduces to the Sobolev inequality on $E$ with two weights, stated above (see   \cite[Theorem 1.1]{BGK_PLMS}). One can also observe that when $\omega_1=\omega_2=\omega_3=:\omega$,  $K=-n-\tau$ and $C_0=1$, where $\tau$ is the degree of homogeneity of the weight $\omega$, Theorem \ref{th:CKN2weights} reduces to the above mentioned main result of Lam \cite[Theorem 1.7]{Lam}, see also Theorem \ref{th:Lam} (and Proposition \ref{Lam-equivalence}). 

Since we obtain a sharp result  in the case when the three weights coincide (see Theorem \ref{th:Lam}), the  question of the converse implication naturally arises for the Gagliardo-Nirenberg inequalities: assuming sharpness and the existence of non-zero extremals in Theorem~\ref{th:CKN2weights} (see  inequality \eqref{eq:CKN2weights}), are the weights related to each other?
The answer turns out to be affirmative and it reads as follows.
\begin{theorem}\label{prop:equalitycase-intro}
	Under the same assumptions as in Theorem \ref{th:CKN2weights},  if  
	 equality holds 
	in \eqref{eq:CKN2weights} for some 
	$u\in \dot{W}^{p,\alpha }(\omega_1,\omega_2;E)\setminus \{0\}$ and the infimum in \eqref{C-1-2-constant-initial} is achieved, then the following statements hold.  
	\begin{itemize}
		\item[(i)] 	If $\frac{1}{1-\gamma} + K \neq 0$, then the  weights $\omega_1,$  $\omega_2$  and $\omega_3$  are equal up to some constant multiplicative factors and $\frac{1-\gamma}{1-n(1-\gamma)}$-concave on $E$.
		\item[(ii)] If $\frac{1}{1-\gamma} + K =0$, 
		then the $\omega_{3}$ term disappears from \eqref{eq:conditionC}, the Gagliardo-Nirenberg inequality reduces to the Sobolev inequality,  $\omega_{1}$ and $\omega_{2}$ are equal up to a constant multiplicative factor and  $\frac{1-\gamma}{1-n(1-\gamma)}$-concave on $E$.
	\end{itemize}
In addition, in both cases {\rm (i)} and {\rm (ii)}, one has that $u(x)=A(\lambda+|x+x_0|^{p'})^\frac{1}{1-\alpha}$, $x\in E$, for some  $A,\lambda >0$ and $x_0\in -\overline E\cap \overline E$ with $\omega_1(x+x_0)=\omega_1(x)$, $x\in E$.
\end{theorem}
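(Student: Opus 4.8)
The plan is to read off the rigidity from the equality cases in the optimal-transport proof of Theorem~\ref{th:CKN2weights}. That proof compares the competitor $u$ with the target density coming from a minimizer $G^{*}$ of \eqref{C-1-2-constant-initial}, builds the Brenier map $T=\nabla\varphi$ (with $\varphi$ convex) transporting the weighted measure attached to $u$ onto the one attached to $G^{*}$, and then estimates through a chain of inequalities: an arithmetic--geometric mean inequality applied to the eigenvalues of the Hessian $D^{2}\varphi$ (equivalently the Monge--Amp\`ere/concavity step $\det(D^{2}\varphi)^{1/n}\le \tfrac1n\,\mathrm{tr}(D^{2}\varphi)$), the pointwise use of condition~\textbf{(C)} along the pairs $(x,T(x))$, and finally the optimization over $\mathcal G_{\omega_1,\omega_2}$. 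Since equality holds in \eqref{eq:CKN2weights} for a nonzero $u$ and the infimum in \eqref{C-1-2-constant-initial} is attained at some $G^{*}$, \emph{every} inequality in the chain must be an equality for a.e.\ $x$ in the (full-dimensional, homogeneity-invariant) support of the source measure; the existence of $G^{*}$ is exactly what guarantees this support is large enough to sample the relations below.

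First I would exploit the equality case of the AM--GM step. Equality forces $D^{2}\varphi(x)=\lambda(x)\,I$ a.e. Since $\nabla\varphi$ is a gradient, differentiating $\partial_i^2\varphi=\lambda$ and using $\partial_i\partial_j\varphi=0$ for $i\neq j$ yields $\partial_k\lambda=0$ for all $k$ once $n\ge 2$; hence $\lambda$ is constant and $T$ is affine, and the cone structure of $E$ together with the boundary condition $\nabla\varphi\cdot\mathbf n\le 0$ on $\partial E$ pins it down to a homothety $T(x)=\lambda x$ centered at the origin. This is the structural backbone of the argument.

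Next I would turn to the equality case of condition~\textbf{(C)}. The crucial observation is that \eqref{eq:conditionC} holds for \emph{all} $x,y\in E$ and is \emph{tight} at $y=T(x)=\lambda x$; thus $y=\lambda x$ is an interior minimizer of $y\mapsto \mathrm{RHS}-\mathrm{LHS}$, giving both a value-matching and a first-order (gradient-in-$y$) condition. Using homogeneity (Euler's identity $\nabla\omega_i\cdot x=\tau_i\omega_i$ and the degree $\tau_i-1$ homogeneity of $\nabla\omega_i$), the left-hand side of \eqref{eq:conditionC} evaluated along $y=\lambda x$ is a \emph{constant}, so in case~(i), where $\tfrac{1}{1-\gamma}+K\neq0$, value-matching forces $\omega_3=c\,\omega_1^{1/p'}\omega_2^{1/p}$ on $E$. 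The $y$-gradient tangency condition then reads, after the same homogeneity reduction, as an identity of the form $\kappa\big(\tfrac1p\nabla\log\omega_2+(\tfrac1{p'}-\gamma)\nabla\log\omega_1\big)=C_0\big(\tfrac1p\nabla\log\omega_2+\tfrac1{p'}\nabla\log\omega_1\big)$ for a scalar $\kappa$; since $\gamma\neq0$ this can hold only if $\nabla\log\omega_1$ and $\nabla\log\omega_2$ are everywhere proportional, whence $\omega_1$ and $\omega_2$ are equal up to a constant (homogeneity forcing equal degrees), and then $\omega_3$ is too. Once the three weights coincide up to constants, \eqref{eq:conditionC} reduces to the single-weight condition \eqref{Lam-equiv}, which by the equivalence recorded in the Introduction is exactly $\tfrac{1-\gamma}{1-n(1-\gamma)}$-concavity; this proves~(i). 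For case~(ii), $\tfrac1{1-\gamma}+K=0$ annihilates the $\omega_3$-term, forces $M=0$ and $\theta_1=1$, so \eqref{eq:CKN2weights} degenerates into the two-weight Sobolev inequality; the same homothety-plus-tangency analysis (equivalently, the rigidity of \cite{BGK_PLMS}) yields $\omega_1\propto\omega_2$ and the stated concavity.

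The \textbf{main obstacle} I anticipate is the propagation step: a priori the equality in \textbf{(C)} is known only along the graph $\{(x,\lambda x)\}$ of a single transport map, whereas the conclusion is a relation valid on all of $E$. The resolution is precisely the interplay of two features established above --- the homothetic (hence homogeneity-compatible) form of $T$ and the full-dimensional, cone-invariant support of the optimal $G^{*}$ --- which together let the pointwise identities be spread over an open subcone and then, by continuity and homogeneity of the $\mathcal C^{1}$ weights, over all of $E$. Secondary technical points, which I would treat carefully but not belabor, are the a.e.\ justification of the first-order tangency (regularity of the Brenier map and the handling of boundary pairs where $T(x)\in\partial E$) and the verification that the degenerate conventions $0^0=1$, $M=0$ in the borderline regime are compatible with the equality-case bookkeeping.
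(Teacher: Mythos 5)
Your overall strategy---tracing equality back through the OMT proof (equality in AM--GM forces the Hessian to be a multiple of the identity, then a first-order tangency condition in condition (C) at $y=T(x)$, then reduction to a one-weight statement)---is the same as the paper's, but the proposal has genuine gaps. The most serious one: you differentiate the identity $D^{2}\varphi=\lambda(x)I$ as if $\varphi$ were $\mathcal C^{2}$. The AM--GM equality only controls the \emph{Alexandrov} (absolutely continuous) part $D^{2}_{A}\phi$ of the Hessian; a priori the distributional Hessian has a nontrivial singular part, and then ``$D^{2}_{A}\phi=\lambda I$ a.e.'' does not make $\nabla\phi$ affine. The paper's Step~1 is devoted exactly to killing this singular part: equality in Young's inequality \eqref{eq:CauchySchwartz} gives $|\nabla u|^{p}\omega_2=Mu^{\alpha p}|\nabla\phi|^{p'}\omega_1$ a.e., from which $u^{1-\alpha}$ is locally Lipschitz and $u$ is bounded away from zero on compact subsets of $\Omega=\{\phi<+\infty\}$; feeding this into the equality case of the integration-by-parts inequality of Proposition~\ref{prop:integrationbyparts} forces $\Delta_{s}\phi=0$, and only then does the Schwarz-lemma argument apply. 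The paper explicitly flags this point (Remark~\ref{rem:PLMS}) as filling a gap in earlier literature, so it cannot be relegated to a ``secondary technical point.''

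Two further steps would fail as written. First, your claim that the cone structure and the boundary condition pin the affine map down to a homothety $T(x)=\lambda x$ centered at the origin is false: the correct conclusion is $\nabla\phi(x)=cx+x_{0}$ with $x_{0}\in\overline E\cap(-\overline E)$ possibly nonzero (take $E$ a half-space and $x_{0}$ in the boundary hyperplane); indeed the extremals are translates $A(\lambda+|x+x_{0}|^{p'})^{1/(1-\alpha)}$, so an argument forcing $x_{0}=0$ proves too much. With $x_{0}\neq0$, your ``value-matching'' step (constancy of the LHS of (C) along the graph) no longer follows from homogeneity alone; the paper derives it from the Schwarz-lemma constancy (relation \eqref{eq:cweights}) and must additionally establish $\nabla\omega_{i}(x)\cdot x_{0}=0$ before the tangency computation closes. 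Moreover, your tangency identity by itself only yields $\nabla\log\omega_{2}=t\,\nabla\log\omega_{1}$ for some constant $t$, i.e.\ $\omega_{2}=C\omega_{1}^{t}$, which homogeneity does not upgrade to $t=1$; one must carry the precise constants through (as in the paper's Step~3, where the factor $-\gamma/(1-\gamma)$ appears and $\gamma\neq0$ gives exactly $\nabla\log\omega_{1}=\nabla\log\omega_{2}$). Second, all identities are obtained only on $\Omega$, hence on ${\sf Cone}_\Omega$ by homogeneity, which may a priori be a proper subcone of $E$; ``continuity and homogeneity'' cannot propagate $\omega_{2}=A\omega_{1}$ from a proper subcone to all of $E$. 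The paper closes this with a separate argument (Step~6): on ${\sf Cone}_\Omega$ the inequality reduces to the sharp one-weight Gagliardo--Nirenberg inequality, the equality characterization in Theorem~\ref{th:Lam} forces $u$ to have the explicit form, strictly positive on $\Omega$ and vanishing outside, and if $\Omega\neq E$ this creates a jump discontinuity on an $(n-1)$-dimensional set, contradicting $u\in\dot W^{p,\alpha}(\omega_1,\omega_2;E)$; hence $\Omega={\sf Cone}_\Omega=E$.
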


	We stress that the key step of the proof of Theorem~\ref{prop:equalitycase-intro} is tracing back the equalities inside the proof of Theorem~\ref{th:CKN2weights}. To this end it is crucial that Theorem~\ref{th:CKN2weights} has been proven directly with functions belonging to the correct Sobolev space $\dot W^{p,\alpha}(\omega_1, \omega_2; E)$: a proof with smooth and compactly supported functions would have let us obtain the same inequalities stated in Theorem~\ref{th:CKN2weights} via a density argument, but it would have not allowed to characterize the equality case. In sight of this, a generalized weighted integration by parts formula is proved in Proposition~\ref{prop:integrationbyparts}: we believe this technical instrument might deserve its own independent interests. 

The assumption concerning the attainability of the infimum in \eqref{C-1-2-constant-initial} is important in our argument; at this moment we have no certainty that this compactness property holds for two generic weights $\omega_1$ and $\omega_2$ which are related only by condition (C). On the other hand, when the weights are equal to each other (up to some multiplicative constant), the latter property follows by a duality argument.

In the  case $\gamma>1$ a ``dual'' Gagliardo-Nirenberg inequality to \eqref{eq:CKN2weights} can be established    via condition (C); to do this, we assume that $1+K(1-\gamma)>0$ and  by using the same notations from \eqref{L-M}, we introduce the constants 
\begin{equation}\label{C-1-1-constant-initial-dual}
	\tilde C_{K,L,M,C_0}=\left(\frac{1}{\gamma-1}-K\right)^{-\frac{M}{p}}C_0^{-L+(1-\gamma)n\left(\frac{M}{p}+L\right)}(\gamma \alpha)^{-L}(\gamma-1)^{-\frac{M}{p}-L}\frac{M^\frac{M}{p}(-L)^L}{(M+pL)^{\frac{M}{p}+L}},
\end{equation}
and 
\begin{equation}\label{C-1-2-constant-initial-dual}
	\tilde C_{\mathcal G_{\omega_1,\omega_2}}=\inf_{G\in \mathcal G_{\omega_1,\omega_2};\int_E G\omega_1=1}\frac{\displaystyle\left(\int_E G^\gamma(y)\omega_1^\frac{1}{p'}(y)\omega_2^\frac{1}{p}(y)dy\right)^{\frac{M}{p}+L}}{\displaystyle\left(\int_E G(y)|y|^{p'}\omega_1(y)dy\right)^\frac{L}{p'}}.
\end{equation}

\begin{theorem}\label{th:CKN2weights-2} $(\gamma>1)$ Let $n\geq 2$, $1<p< \infty$, $\gamma>1$ and $E\subseteq \mathbb R^n$ be an open convex cone, and  $\omega_i\colon E\to (0,+\infty)$ be homogeneous weights  with degree $\tau_i>-n$ and of class $\mathcal C^1$, $i\in \{1,2,3\}$, such that the triplet  $(\omega_1,\omega_2,\omega_3)$ satisfies  condition \textbf{\rm (C)} on $E$. 
	Let $\alpha=\frac 1{p(\gamma-1)+1}$ and assume that $L<0$ and $1+K(1-\gamma)>0$. Then  $\tau_3=\frac{\tau_1}{p'}+\frac{\tau_2}p$ and  
one has
\begin{equation}\label{eq:CKN2weightsgamma>1}
	\left(\int_E |u|^{\alpha p\gamma}\omega_3dx\right)^{\frac{1}{\alpha p\gamma}}\leq {\sf C}_2 \left(\int_E|\nabla u|^p\omega_2 dx\right)^{\frac{\theta_2}{p}}\left(\int_E|u|^{\alpha p }\omega_1 dx\right)^{\frac{1-\theta_2}{\alpha p }},\  \forall u\in \dot{W}^{p,\alpha}(\omega_1,\omega_2;E),
\end{equation} 
where 
 \begin{equation}\label{C-2-optimal}
	{\sf C}_2=\left(\tilde C_{K,L,M,C_0}\tilde C_{\mathcal G_{\omega_1,\omega_2}}\right)^\frac{1}{\alpha\gamma M} \  and\  \theta_2=-\frac{L}{\alpha\gamma M}\in (0,1).
\end{equation}
%
%
%
\end{theorem}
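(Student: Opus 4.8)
The plan is to run the optimal-transport scheme already used for Theorem~\ref{th:CKN2weights}, the decisive difference being that for $\gamma>1$ the quantities $1-\gamma$, $\tfrac1{1-\gamma}-n$, $L$ and $\tfrac1{1-\gamma}+K$ all reverse sign, so that the convexity steps of the case $\gamma<1$ turn into their concave counterparts. Throughout I write $\Omega:=\omega_1^{1/p'}\omega_2^{1/p}$, so that the middle term of \eqref{eq:conditionC} is $(\tfrac1{1-\gamma}+K)\tfrac{\omega_3}{\Omega}$ and its gradient term is $C_0\tfrac{\nabla\Omega}{\Omega}\cdot y$. The first thing to settle is the identity $\tau_3=\tfrac{\tau_1}{p'}+\tfrac{\tau_2}{p}$: replacing $(x,y)$ by $(tx,ty)$ in \eqref{eq:conditionC} and using homogeneity, the left-hand side and the gradient term are scale invariant, while the $\omega_3$-term scales like $t^{\tau_3-\tau_1/p'-\tau_2/p}$. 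Since $\gamma>1$ and $1+K(1-\gamma)>0$ force $\tfrac1{1-\gamma}+K<0$, that term has a strictly negative coefficient, so letting $t\to0^+$ or $t\to+\infty$ would drive the right-hand side to $-\infty$ unless its homogeneity degree vanishes; this forces $\tau_3=\tfrac{\tau_1}{p'}+\tfrac{\tau_2}{p}$.

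For the inequality itself I may assume $u\ge0$ (replacing $u$ by $|u|$ changes none of the three integrals in \eqref{eq:CKN2weightsgamma>1}) and, after rescaling, $\int_E u^{\alpha p}\omega_1\,dx=1$. Fixing $G\in\mathcal G_{\omega_1,\omega_2}$ with $\int_E G\omega_1\,dx=1$, I let $\nabla\varphi$ be the Brenier map sending $u^{\alpha p}\omega_1\,dx$ onto $G\omega_1\,dx$, so that $u^{\alpha p}(x)\omega_1(x)=G(\nabla\varphi(x))\omega_1(\nabla\varphi(x))\det D^2\varphi(x)$ a.e. Changing variables in $\int_E G^\gamma\Omega\,dy$ and substituting the Monge--Amp\`ere equation yields
\[
\int_E G^\gamma\Omega\,dy=\int_E u^{\alpha p\gamma}\,\Omega(x)\,B(x)\,(\det D^2\varphi(x))^{1-\gamma}\,dx,
\]
where $B(x)=\big(\tfrac{\omega_2(\nabla\varphi(x))}{\omega_2(x)}\big)^{1/p}\big(\tfrac{\omega_1(\nabla\varphi(x))}{\omega_1(x)}\big)^{1/p'-\gamma}$ is exactly the bracket of \eqref{eq:conditionC}. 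The key sign reversal now enters: with $s:=1-n(1-\gamma)$ one has $s>1$ and $(\det D^2\varphi)^{1-\gamma}=[(\det D^2\varphi)^{1/n}]^{1-s}$, so the reverse weighted arithmetic--geometric inequality $a^sb^{1-s}\ge sa+(1-s)b$ (valid for $s>1$) applies with $a=B^{1/s}$ and $b=(\det D^2\varphi)^{1/n}$. Inserting \eqref{eq:conditionC}, whose leading factor $\tfrac1{1-\gamma}-n$ is now negative and which therefore provides a lower bound for $B^{1/s}$, together with the trace inequality $(\det D^2\varphi)^{1/n}\le\tfrac1n\,{\rm tr}\,D^2\varphi$, produces a pointwise lower bound for the integrand whose coefficient of $\tfrac{\omega_3}{\Omega}$ is $1+(1-\gamma)K>0$.

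Integrating this pointwise inequality and integrating by parts the resulting weighted-divergence term — this is where Proposition~\ref{prop:integrationbyparts} is needed, to control both the boundary contribution on $\partial E$ via $\nabla\varphi\cdot\mathbf n\le0$ and the singular part of the distributional Hessian — I expect to reach, in the model case $C_0=1$, the estimate
\[
[1+(1-\gamma)K]\int_E u^{\alpha p\gamma}\omega_3\,dx\le\int_E G^\gamma\Omega\,dy+\gamma\alpha p(\gamma-1)\Big(\int_E|\nabla u|^p\omega_2\,dx\Big)^{1/p}\Big(\int_E G|y|^{p'}\omega_1\,dy\Big)^{1/p'},
\]
the gradient factor arising by H\"older's inequality (which closes up precisely because $(\alpha p\gamma-1)p'=\alpha p$, i.e.\ because $\alpha=\tfrac1{p(\gamma-1)+1}$) and the last factor by pushing $u^{\alpha p}\omega_1\,dx$ forward to $G\omega_1\,dx$. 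The general case $C_0>0$ is treated by the analogous weighted computation, and $C_0$ then surfaces in the explicit constant $\tilde C_{K,L,M,C_0}$.

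Finally I would optimize. Dilating the competitor by $G_\sigma(y):=\sigma^{-(n+\tau_1)}G(y/\sigma)$ preserves $\int_E G_\sigma\omega_1\,dx=1$ and scales $\int_E G^\gamma\Omega$ by $\sigma^{L}$ and $\int_E G|y|^{p'}\omega_1$ by $\sigma^{p'}$; since $L<0$ the right-hand side of the displayed estimate, of the form $\sigma^L A+\sigma B$ with $A,B>0$, has a finite minimum in $\sigma>0$, and minimizing turns the sum into a product of suitable powers of the three integrals. Using the homogeneity of the integrals in \eqref{eq:CKN2weightsgamma>1} under $u\mapsto u(\cdot/\sigma)$ and the identity $\alpha(n+\tau_2-p)-(n+\tau_1)=-\alpha M$ then fixes $\theta_2=-\tfrac{L}{\alpha\gamma M}$; since the degree relation forces $M=p(1-L)$, the hypothesis $L<0$ already gives $M>0$ and $\theta_2\in(0,1)$, and makes the $\sigma$-minimization solvable. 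Taking the infimum over $G\in\mathcal G_{\omega_1,\omega_2}$ produces $\tilde C_{\mathcal G_{\omega_1,\omega_2}}$, and collecting the explicit constants yields ${\sf C}_2=(\tilde C_{K,L,M,C_0}\tilde C_{\mathcal G_{\omega_1,\omega_2}})^{1/(\alpha\gamma M)}$. The main obstacle I anticipate is twofold: keeping the sign bookkeeping coherent, since $1-\gamma<0$ dictates the reverse AM--GM, the direction of the $\sigma$-minimization and the positivity of the $\omega_3$-coefficient, with each of $L<0$, $M>0$, $1+K(1-\gamma)>0$ legitimizing exactly one step; and carrying out the integration by parts rigorously for $u\in\dot W^{p,\alpha}(\omega_1,\omega_2;E)$ on the cone, which is precisely the content secured by Proposition~\ref{prop:integrationbyparts}.
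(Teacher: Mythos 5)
Your proposal is correct and takes essentially the same route as the paper: Brenier map plus the Monge--Amp\`ere equation, the sign-reversed AM--GM/trace estimate (the paper's Lemma~\ref{tr-det-inequality}, valid for $\gamma>1$) combined with condition (C), Proposition~\ref{prop:integrationbyparts} for the weighted integration by parts, the degree identity $M+pL=p$ coming from $\tau_3=\tfrac{\tau_1}{p'}+\tfrac{\tau_2}{p}$, and a final scaling optimization and infimum over $G$ yielding ${\sf C}_2=(\tilde C_{K,L,M,C_0}\tilde C_{\mathcal G_{\omega_1,\omega_2}})^{1/(\alpha\gamma M)}$. The only deviations are equivalent reformulations of two steps: you apply H\"older directly where the paper uses Young's inequality with a parameter $\mu$ optimized afterwards (see \eqref{mu-choice-2}), and you dilate the target density $G$ (minimizing in $\sigma$) where the paper dilates $u$ via $u_\lambda(x)=\lambda^{(n+\tau_1)/(\alpha p)}u(\lambda x)$ and maximizes the right-hand side of \eqref{eq:lambdamax} in $\lambda$ --- both produce the same exponents $\theta_2=-\tfrac{L}{\alpha\gamma M}$ and the same constant.
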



Let us mention that it will follow from the proof of Theorem \ref{th:CKN2weights-2} that $M>0$ and thus the constant in \eqref{C-1-1-constant-initial-dual} is well defined. 

If the three weights are \textit{equal} and we take the limit $\gamma\to 1$ in the Gagliardo-Nirenberg inequalities (either in Theorem~\ref{th:CKN2weights} or Theorem~\ref{th:CKN2weights-2}), we are able in Theorem~\ref{log-Sobolev} to get sharp one-weighted $p$-log-Sobolev inequalities on $E$ together with the explicit sharp constant. However, this limiting method does not provide a way to characterize the equality cases. On the other hand, a direct OMT-based proof of the sharp one-weighted $p$-log-Sobolev inequality has been given recently in \cite{BDK} together with a full characterization of the equality cases. 

A further application of Theorem \ref{th:CKN2weights-2}, is  Theorem~\ref{th:faber-krahn},  a weighted Faber-Krahn inequality that is obtained by choosing the weights equal to each other and letting $\gamma\to +\infty$ in \eqref{eq:CKN2weightsgamma>1}.


We finally underline how the characterization of the equality case for the weighted Gagliardo-Nirenberg inequality in the case $\gamma>1$ is still open. To the best of our knowledge, the answer to this question is still unknown even in the unweighted case, see e.g.\ the case $\alpha<1$ in \cite{CENV}. The main difficulty comes from the regularity of the transport map $\phi$: in the case $\gamma<1$ we are able to prove that the optimizer is strictly positive on compact subsets of its support and hence the singular part of the distributional Laplacian of $\phi$ vanishes. However, in the case $\gamma>1$, the optimizers are solution of a $p$-Laplace equation with additional singular terms, hence less regularity of its solutions has to be expected.

The paper is organized as follows. In section \S \ref{section-2} we prove the main Gagliardo-Nirenberg inequalities with three weights, see Theorems \ref{th:CKN2weights} \& \ref{th:CKN2weights-2}. In section \S \ref{section-3}  we establish sharp  weighted Sobolev, Gagliardo-Nirenberg, log-Sobolev, Faber-Krahn and isoperimetric inequalities in the case when the weights are equal.  Section \S \ref{section-4} is devoted to the discussion of the rigidity, by proving Theorem \ref{prop:equalitycase-intro}. In section \S \ref{sec:Preliminar-0} we construct several classes of weights which verify our main condition (C) and prove the equivalence between \eqref{Lam-1-0} and the $\frac{1-\gamma}{1-n(1-\gamma)}$-concavity of the weight (both for $\gamma\neq 1$ and in the limit case $\gamma\to 1$).

\section{Weighted Gagliardo-Nirenberg inequalities: proof of Theorems \ref{th:CKN2weights} \& \ref{th:CKN2weights-2}}\label{section-2}

This section is devoted to the proofs of Theorems \ref{th:CKN2weights} and \ref{th:CKN2weights-2}. Before that we first discuss necessary conditions that the weights need to satisfy when condition (C) holds, by using the homogeneity property of the weights with respect to $y$ and $x$. We notice that by multiplying both sides of \eqref{eq:conditionC} by $\omega_1^{-1/{p}}\omega_2^{1/p}(x)$ we obtain the equivalent condition
\begin{equation}\label{eq:conditionCalt}
\left(\frac 1{1-\gamma}-n\right)\left(\frac{\omega_2^{\frac 1p}(y)\omega_1^{\frac1{p'}-\gamma}(y)}{\omega_2^{\frac np(1-\gamma)}(x)\omega_1^{\left(1-\frac np\right)(1-\gamma)}(x)}\right)^{\frac1{1-n(1-\gamma)}}\leq \left(\frac 1{1-\gamma}+K\right)\frac{\omega_3(x)}{\omega_1(x)}+C_0\frac{\nabla(\omega_2^{\frac 1p}\omega_1^{\frac{1}{p'}})(x)}{\omega_1(x)}\cdot y,
\end{equation}
  Here we focus on the homogeneity in $y$.

\begin{itemize}
	 
	\item {\it Homogeneity in $y$, case $\gamma<1$}. Replacing $y$ with $\lambda y$ in \eqref{eq:conditionCalt}, and letting $\lambda \to +\infty$ and $\lambda\to0$ we get
	\begin{equation}\label{homog-cond-1}
	0\leq\frac{\tau_2}p+\tau_1\left(\frac{1}{p'}-\gamma\right)\leq 1-n(1-\gamma).
	\end{equation}

	\item 	{\it Homogeneity in $y$, case $\gamma>1$}. In this case we can rearrange the terms in \eqref{eq:conditionCalt} in the following way:
	\begin{equation}\label{eq:changeorder}
	\left(\frac 1{\gamma-1}-K\right)\frac{\omega_3(x)}{\omega_1(x)}\leq \left(\frac1{\gamma-1}+n\right)\left(\frac{\omega_2^{\frac 1p}(y)\omega_1^{\frac1{p'}-\gamma}(y)}{\omega_2^{\frac np(1-\gamma)}(x)\omega_1^{\left(1-\frac np\right)(1-\gamma)}(x)}\right)^{\frac1{1-n(1-\gamma)}}+C_0\frac{\nabla(\omega_2^{\frac 1p}\omega_1^{\frac{1}{p'}})(x)}{\omega_1(x)}\cdot y.
	\end{equation}
	Since $\frac1{\gamma-1}-K>0$ by assumption  (see Theorem \ref{th:CKN2weights-2}), replacing $y$ with $\lambda y$ and letting $\lambda \to 0,$ we get
	\[
	\frac{\tau_2}p+\tau_1\left(\frac 1{p'}-\gamma\right)\leq 0.
	\]
	Moreover, choosing $y=x= \lambda v $ in \eqref{eq:conditionCalt} and letting $\lambda \to +\infty$ and $\lambda \to 0$ we obtain:
	$
	\tau_3=\frac {\tau_2}p+\frac{\tau_1}{p'}.
	$
	\item  In both cases, i.e., $\gamma<1$ and $\gamma>1$ (with  $\frac1{\gamma-1}-K>0$), choosing $y=\lambda x$ in \eqref{eq:conditionCalt} we obtain
	$
	\frac{\tau_2}p+\frac{\tau_1}{p'}\geq 0
	$
	and 
	\begin{equation}\label{gradient-nonnegative}
		{\nabla(\omega_2^{\frac 1p}\omega_1^{\frac{1}{p'}})(x)}\cdot y\geq 0
	\end{equation}
	for every $x,y\in E$.
\end{itemize}

The next lemma will be crucial in the sequel; its proof  relies on the AM-GM inequality, see \cite{Lam}.

\begin{lemma} \label{tr-det-inequality}
	Let $n\geq 2$ be a natural number,  $\gamma\geq 1-\frac{1}{n}$, $\gamma\neq 1$, $C>0$ and  $M$ be an $(n\times n)$-type positive-definite symmetric matrix. Then 
	\begin{equation}\label{tr-det}
		\frac{1}{1-\gamma}C^{1-n(1-\gamma)}({\rm det}(M))^{1-\gamma}\leq \left(\frac{1}{1-\gamma}-n\right)C+{\rm tr}(M).
	\end{equation}
	Moreover, the equality in \eqref{tr-det} holds if and only if $M=C I_n$. $($Hereafter, $I_n$ denotes the $(n\times n)$-type unitary matrix.$)$
\end{lemma}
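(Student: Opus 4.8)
The plan is to prove the matrix inequality \eqref{tr-det} by reducing it to a purely scalar optimization via the spectral decomposition of $M$ together with the AM-GM inequality, as hinted in the statement. First I would diagonalize: since $M$ is symmetric positive-definite, write its eigenvalues as $\lambda_1,\dots,\lambda_n>0$, so that $\operatorname{tr}(M)=\sum_i \lambda_i$ and $\det(M)=\prod_i \lambda_i$. The key observation is that both sides of \eqref{tr-det} depend on $M$ only through these symmetric functions, so the inequality becomes the scalar statement
\begin{equation*}
\frac{1}{1-\gamma}\,C^{1-n(1-\gamma)}\Bigl(\textstyle\prod_{i=1}^n \lambda_i\Bigr)^{1-\gamma}\leq \Bigl(\frac{1}{1-\gamma}-n\Bigr)C+\sum_{i=1}^n \lambda_i,
\end{equation*}
to be proved for all $\lambda_i>0$ and fixed $C>0$, with equality exactly when all $\lambda_i=C$.

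The main analytic step is to control the geometric-mean term. By the weighted AM-GM inequality applied to the positive numbers $\lambda_i/C$, one has $\bigl(\prod_i \lambda_i/C\bigr)^{1/n}\leq \frac{1}{n}\sum_i \lambda_i/C$. Raising this to the power $n(1-\gamma)$ requires care about the sign of the exponent: since $\gamma\geq 1-\tfrac1n$ we have $n(1-\gamma)\leq 1$, and I would split into the subcases $\gamma<1$ (so $0<n(1-\gamma)\leq 1$, exponent in $(0,1]$, and the prefactor $\tfrac1{1-\gamma}>0$) and $\gamma>1$ (so $n(1-\gamma)<0$ and $\tfrac1{1-\gamma}<0$). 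A cleaner and more uniform route, which I expect to be the cleanest to write, is to set $t=\tfrac1n\sum_i\lambda_i/C$ (the normalized trace) and use AM-GM to bound $\det(M)/C^n\leq t^n$ when $1-\gamma\geq 0$ and the reverse-style estimate when $1-\gamma<0$; in either case, after multiplying through by $C$, the claim reduces to the one-variable inequality
\begin{equation*}
\frac{1}{1-\gamma}\,t^{\,n(1-\gamma)}\leq \frac{1}{1-\gamma}-n+nt, \qquad t>0,
\end{equation*}
where the right-hand side comes from $\bigl(\tfrac1{1-\gamma}-n\bigr)+n t$ after dividing by $C$.

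This final scalar inequality is the heart of the matter, and I would establish it by elementary calculus. Define $f(t)=\bigl(\tfrac1{1-\gamma}-n\bigr)+nt-\tfrac1{1-\gamma}t^{n(1-\gamma)}$; then $f(1)=0$ and $f'(t)=n-\tfrac{n(1-\gamma)}{1-\gamma}t^{n(1-\gamma)-1}=n\bigl(1-t^{n(1-\gamma)-1}\bigr)$. Since the exponent $n(1-\gamma)-1<0$ in both relevant cases, $f'(t)$ vanishes only at $t=1$, is negative for $t<1$ and positive for $t>1$, so $f$ attains a strict global minimum $f(1)=0$ at $t=1$; hence $f(t)\geq 0$ with equality iff $t=1$. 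Unwinding the substitution, equality in \eqref{tr-det} forces both $t=1$ (i.e.\ $\tfrac1n\sum_i\lambda_i=C$) and equality in AM-GM (i.e.\ all $\lambda_i$ equal), which together give $\lambda_i=C$ for every $i$, that is $M=CI_n$. The main obstacle to watch is purely bookkeeping: correctly tracking how the sign of $1-\gamma$ flips the direction of the AM-GM step and simultaneously the sign of the prefactor $\tfrac1{1-\gamma}$, so that the two flips cancel and the monotonicity argument for $f$ goes through uniformly in both the $\gamma<1$ and $\gamma>1$ regimes.
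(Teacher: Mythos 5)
Your proof is correct on the range $\gamma>1-\frac1n$, $\gamma\neq 1$, and it follows essentially the route the paper itself points to: the paper gives no self-contained proof of this lemma, but states that it "relies on the AM-GM inequality" and cites Lam, which is exactly your scheme of diagonalizing $M$, applying AM-GM to the eigenvalues (with the two sign flips for $\gamma>1$ cancelling), and finishing with the one-variable function $f(t)$.

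One caveat is worth recording. The lemma as stated allows the endpoint $\gamma=1-\frac1n$, and there your parenthetical claim that $n(1-\gamma)-1<0$ "in both relevant cases" fails: the exponent equals zero, $f'\equiv 0$, $f\equiv 0$, and your scalar inequality degenerates into an identity. The inequality \eqref{tr-det} itself still follows from your chain (it reduces to plain AM-GM, $n(\det M)^{1/n}\leq {\rm tr}(M)$, since $C^{1-n(1-\gamma)}=1$ and the coefficient of $C$ on the right vanishes), but the equality analysis no longer forces $t=1$, so you do not recover $M=CI_n$. In fact no argument could: at $\gamma=1-\frac1n$ equality holds for every $M=\lambda I_n$ with $\lambda>0$ (e.g.\ $M=2CI_n$), so the "only if" part of the lemma is itself false at this endpoint. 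Your proof, and the equality characterization, are valid precisely on the range $\gamma>1-\frac1n$, which is also the only range in which the paper applies the lemma (condition (C) assumes $\gamma>1-\frac1n$); it would be worth stating this restriction explicitly rather than claiming the strict sign of the exponent in all cases.
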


The following integration by parts inequality will be crucial in the proofs of our main results. To formulate this result we denote by  $\Delta_{D'}\phi$ the distributional Laplacian of a convex function $\phi\colon E \to \mathbb R$ and by $\Delta_{A}\phi$  its absolutely continuous part. 

\begin{proposition}\label{prop:integrationbyparts}
	Let $p>1$,  $E\subseteq \mathbb R^n$ be an open convex cone, and  $\omega_i\colon E\to (0,+\infty)$ be weights 
	of class $\mathcal C^1$, $i\in \{1,2\}$, such that $\nabla (\omega_1^{1/{p'}}\omega_2^{1/p}) (x)\cdot y\geq 0$ for every $x,y\in E.$
	Let   $\phi\colon \mathbb R^n\to \mathbb R$ be a convex function such that $\nabla\phi(x)\in \overline E$ for a.e. $x\in \overline E$. If $q>1$, and  $f\colon E\to [0,+\infty)$ is a measurable function such that  $f^{q-1}\nabla\phi\in L^{p'}(\omega_1;E)$ and  $\nabla f\in L^{p}(\omega_2;E)$, then 
	\begin{equation}\label{eq:Byparts}
	\int_E f^{q}\omega_1^\frac{1}{p'}\omega_2^\frac{1}{p} \Delta_A\phi dx\leq  -q\int_E f^{q-1}\omega_1^\frac{1}{p'}\omega_2^\frac{1}{p} \nabla \phi \cdot \nabla f dx -\int_E f^{q} \nabla (\omega_1^\frac{1}{p'}\omega_2^\frac{1}{p}) \cdot \nabla \phi dx.
	\end{equation}
	
\end{proposition}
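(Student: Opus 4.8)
The plan is to set $w:=\omega_1^{1/p'}\omega_2^{1/p}$, a positive $\mathcal C^1$ weight for which the hypothesis reads $\nabla w(x)\cdot y\ge 0$ for all $x,y\in E$, and to prove \eqref{eq:Byparts} as a rigorous, measure-theoretic version of the formal identity $\int_E g\,\Delta\phi\,dx=-\int_E\nabla g\cdot\nabla\phi\,dx+\int_{\partial E}g\,\nabla\phi\cdot\mathbf n\,d\sigma$ applied to $g=f^{q}w$. First I would record the two sign facts that drive the argument. Since $\phi$ is convex, $\Delta_{D'}\phi$ is a nonnegative Radon measure on $E$ whose Lebesgue decomposition is $\Delta_{D'}\phi=\Delta_A\phi\,dx+\mu_s$ with $\mu_s\ge 0$ and $\Delta_A\phi\ge 0$ a.e.; moreover $\nabla\phi\in L^\infty_{\rm loc}$ and $\operatorname{div}\nabla\phi=\Delta_{D'}\phi$ in $\mathcal D'(E)$. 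Because $\nabla\phi(x)\in\overline E$ a.e. and $\nabla w(x)\cdot y\ge 0$ extends by continuity to $y\in\overline E$, the integrand $f^{q}\,\nabla w\cdot\nabla\phi$ is nonnegative; and since $E$ is a convex cone, every $y\in\overline E$ satisfies $y\cdot\mathbf n(z)\le 0$ at each $z\in\partial E$, so $\nabla\phi\cdot\mathbf n\le 0$ on $\partial E$, which is exactly the sign that makes the boundary term harmless.

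The core estimate I would first establish is that for every nonnegative $g\in W^{1,1}(E)$ with support compactly contained in the open cone $E$,
\[
\int_E g\,\Delta_A\phi\,dx \le \int_E g\,d(\Delta_{D'}\phi) = -\int_E\nabla g\cdot\nabla\phi\,dx .
\]
The inequality is immediate from $\mu_s\ge 0$ and $g\ge 0$, while the equality is the definition of the distributional divergence of $\nabla\phi$, valid first for $g\in C_c^\infty(E)$ and then extended to compactly supported $W^{1,1}$ functions by mollification, using $\nabla\phi\in L^\infty_{\rm loc}$ and the local finiteness of $\Delta_{D'}\phi$.

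Next I would apply this to the approximations $g_k:=\eta_k\,T_k(f)^{q}\,w$, where $\eta_k$ is a smooth cutoff with $0\le\eta_k\le 1$, $\eta_k\uparrow 1$ on $E$ and support compactly contained in $E$, and $T_k$ is a truncation of $f$ rendering $T_k(f)^{q}\in W^{1,1}_{\rm loc}$ with $\nabla(T_k(f)^{q})=q\,T_k(f)^{q-1}T_k'(f)\nabla f$ by the chain rule. Expanding $\nabla g_k$ produces three contributions, and I let $k\to\infty$. The left-hand side $\int_E g_k\,\Delta_A\phi\,dx\to\int_E f^{q}w\,\Delta_A\phi\,dx$ by monotone convergence ($\Delta_A\phi\ge 0$); the term $-q\int_E\eta_k T_k(f)^{q-1}T_k'(f)\,w\,\nabla\phi\cdot\nabla f\,dx$ converges to $-q\int_E f^{q-1}w\,\nabla\phi\cdot\nabla f\,dx$ by dominated convergence, the dominating function coming from Hölder's inequality with exponents $p',p$ and the hypotheses $f^{q-1}\nabla\phi\in L^{p'}(\omega_1;E)$, $\nabla f\in L^p(\omega_2;E)$ (this also shows the limit is finite); and the term $-\int_E\eta_k T_k(f)^{q}\,\nabla w\cdot\nabla\phi\,dx$ converges to $-\int_E f^{q}\,\nabla w\cdot\nabla\phi\,dx$ by monotone convergence, the monotonicity being exactly the nonnegativity of $f^{q}\nabla w\cdot\nabla\phi$ recorded in the first step.

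The remaining and genuinely delicate contribution is the cutoff error $\mathcal E_k:=-\int_E T_k(f)^{q}\,w\,\nabla\eta_k\cdot\nabla\phi\,dx$, whose vanishing (or nonnegative $\limsup$) is the crux of the argument and the place where the cone geometry enters. The plan is to choose the cutoffs $\eta_k$ adapted to $E$ — separating the neighbourhood of the vertex, the region near the lateral boundary $\partial E$, and the region at infinity — so that, using $\nabla\phi\in\overline E$ (hence $\nabla\phi\cdot\mathbf n\le 0$ at $\partial E$) together with the global integrability of $f^{q-1}\nabla\phi$ and $\nabla f$, one obtains $\limsup_k\mathcal E_k\le 0$. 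Granting this, the finite-$k$ inequality $\int_E g_k\,\Delta_A\phi\,dx\le -\int_E\nabla g_k\cdot\nabla\phi\,dx$ passes to the limit to give
\[
\int_E f^{q}w\,\Delta_A\phi\,dx \le -q\int_E f^{q-1}w\,\nabla\phi\cdot\nabla f\,dx-\int_E f^{q}\,\nabla w\cdot\nabla\phi\,dx,
\]
which is \eqref{eq:Byparts}, and as a byproduct the $\nabla w$ term is seen to be finite. I expect the control of $\mathcal E_k$ to be the main obstacle: preventing the boundary/cutoff contribution from producing a wrong-signed term in the limit is precisely where convexity of $\phi$, the inclusion $\nabla\phi\in\overline E$, and the cone structure must be combined, and it is the reason the hypotheses are imposed globally on $E$ rather than only on compact subsets.
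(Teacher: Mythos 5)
Your scaffolding --- reduce to nonnegative compactly supported $W^{1,1}$ functions, use $\Delta_A\phi\,dx\leq \Delta_{\mathcal D'}\phi$ together with the distributional divergence identity, and pass to the limit term by term --- is the same as the paper's, and your treatment of three of the four terms is correct (monotone convergence for the $\Delta_A\phi$ and $f^q\nabla w\cdot\nabla\phi$ terms, H\"older plus dominated convergence for the $f^{q-1}w\,\nabla\phi\cdot\nabla f$ term, exactly as in the paper). But the proof is not complete: the entire difficulty of the proposition sits in the cutoff error $\mathcal E_k=-\int_E T_k(f)^q w\,\nabla\eta_k\cdot\nabla\phi\,dx$, and you do not prove $\limsup_k\mathcal E_k\le 0$; you only announce a plan. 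Moreover, the plan as stated cannot work for the part of the cutoff at infinity. A sign argument from the cone geometry is available only near the lateral boundary: there one takes $\eta_k=\theta^0_k(d(\cdot,\partial E))$ with $\theta^0_k$ non-decreasing, so that $\nabla\eta_k(x)=-(\theta^0_k)'(d(x,\partial E))\,\mathbf n(x^*)$ (with $x^*$ the foot of the distance) and $-\nabla\eta_k\cdot\nabla\phi=(\theta^0_k)'\,\mathbf n(x^*)\cdot\nabla\phi\le 0$ because $\nabla\phi\in\overline E$; this is precisely what the paper does in its final step. At infinity, however, $\nabla\eta_k$ points roughly along $-x/|x|$, and $x\cdot\nabla\phi(x)$ has no sign for a general convex cone (take $E=\mathbb R^n$ or a halfspace, both admissible here); nor do the hypotheses $f^{q-1}\nabla\phi\in L^{p'}(\omega_1;E)$, $\nabla f\in L^p(\omega_2;E)$ yield a bound on $\frac{1}{R_k}\int_{\{R_k\le|x|\le 2R_k\}}f^qw|\nabla\phi|\,dx$ --- via H\"older this would require a uniform control of $\frac{1}{R_k}\|f\|_{L^p(\omega_2;\{R_k\le |x|\le 2R_k\})}$, i.e.\ a Hardy/Poincar\'e-type inequality that is not assumed and not implied. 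So ``granting this'' amounts to granting the proposition.

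It is worth seeing how the paper closes this gap, because it needs two \emph{different} mechanisms. Near $\partial E$ it uses the distance-function cutoff and the sign $\mathbf n(x^*)\cdot\nabla\phi\le 0$, so that error term is pointwise of the favorable sign and is simply dropped. At infinity it uses no sign at all: it first compactifies by $f_{k,\varepsilon}(x)=\min\bigl\{f_k\bigl(x_0+\frac{x-x_0}{1-\varepsilon}\bigr),\,\chi(\varepsilon x)f_k(x)\bigr\}$ (a dilation combined with a truncation, the min keeping the gradients uniformly bounded in $L^p$), mollifies, applies the divergence theorem to the resulting smooth compactly supported functions, and then lets $\varepsilon\to 0$ by pairing the strong convergence $f_{k,\varepsilon}^{q-1}\nabla\phi\to f_k^{q-1}\nabla\phi$ in $L^{p'}(\omega_1;E)$ (dominated convergence, from the hypothesis) against the weak $L^p$ convergence $\nabla f_{k,\varepsilon}\rightharpoonup\nabla f_k$. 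In this way the whole gradient integral converges, so the error created by the truncation at infinity vanishes in the limit without ever being given a sign. (A smaller point: your intermediate pairing $\int_E g\,d(\Delta_{\mathcal D'}\phi)$ is ill-defined for $g$ merely in $W^{1,1}$, since the singular part of the measure does not respect a.e.\ equivalence classes; this is harmless because the inequality you need follows from mollification and Fatou, bypassing that pairing, which is also how the paper argues.) To complete your proof you must either import the dilation/weak-compactness argument or find a genuinely new way to control $\mathcal E_k$ at infinity; as written, the step you defer is the theorem.
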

\begin{proof} The proof is adapted  from Cordero-Erausquin, Nazaret and Villani \cite{CENV} (for the unweighted case) and Balogh, Don and Krist\'aly \cite{BDK} (involving  only one weight and  slightly different integrability conditions).  
	Let us denote by $S\coloneqq  {\rm int}\{x\in \mathbb R^n: \phi(x)<+\infty\}$; due to the assumption, $E\cap S$ is open, convex and contains the support of $u$ except, possibly, for a Lebesgue null set.
	
	For every $k\geq 1$, let $\theta_k^0\colon[0,+\infty)\to [0,1]$ be a non-decreasing $\mathcal C^\infty$ function such that $\theta_k^0(s)= 0$ for $s\leq \frac{1}{2k}$ and $\theta_k^0(s)= 1$ for $s\geq \frac{1}{k}$; in addition, let $\theta_k\colon E\to [0,1]$  be the locally Lipschitz function $\theta_k(x)=\theta_k^0(d(x,\partial E))$, $x\in E.$ 
	Let $f_k=f\theta_k$, $k\geq 1$. It turns out that $\nabla f_k\in L^p(\omega_2;E)$ for every $k\geq 1$ and ${\rm supp}f_k\subseteq S_k$ where $S_k=\{x\in E:d(x,\partial E)\geq \frac{1}{2k}\}.$  Furthermore, for every fixed $k\geq 1$, by a trivial extension, one has that  $\nabla f_k\in L^p(\mathbb R^n)$.

	%

	%
	

	Let us consider a cut-off function $\chi\colon \mathbb R^n\to [0,1]$ of class $\mathcal C^\infty$ such that $\chi\equiv 1$ on $B=B(0,1)$ and $\chi\equiv 0$ on $\mathbb R^n\setminus B(0,2)$. 
	Fix $x_0\in E$ and $k\geq 1$. For 	$\varepsilon\in(0,1)$  small enough, we define 
	\[
	f_{k,\varepsilon}(x)=\min\left\{f_k\left(x_0+\frac{x-x_0}{1-\varepsilon}\right),\chi(\varepsilon x)f_k(x)\right\},\  \ x\in E.
	\]
	Then $f_{k,\varepsilon}\geq 0$ has compact support in $E\cap S$ and since $|\nabla\chi|\leq C_0$ for some $C_0>0$, one has that $\nabla f_{k,\varepsilon} \in L^p(\omega_2;E)\cap L^p(\mathbb R^n)$. 
	
	Given  a non-negative function $\kappa\in C_c^\infty(B)$  such that $\displaystyle\int_{B} \kappa  dx=1$, we consider the convolution kernel $\kappa_\delta(x)=\frac{1}{\delta^{n}}\kappa( x/\delta)$, $\delta>0,$  and define the convolution function  $$f_{k,\varepsilon}^\delta(x)=(f_{k,\varepsilon}\star \kappa_\delta)(x)\coloneqq \int_E f_{k,\varepsilon}(y)\kappa_\delta(x-y) dy\geq 0.$$
	It is standard that the functions $f_{k,\varepsilon}^\delta$ are smooth on $E$ for every  $\varepsilon\in (0,1)$ belonging to the above range, and $\delta>0$. Moreover, since $\phi$ is convex, and ${\rm supp} f_{k,\varepsilon}^\delta$ is compact in $E$, then $\nabla\phi$ is essentially bounded on ${\rm supp}(f_{k,\varepsilon}^\delta)$ for every sufficiently small $\delta>0$.
	
	The idea is to apply the usual divergence theorem to the smooth function $f_{k,\varepsilon}^\delta$  and pass to the limit as $\delta\to 0$, $\varepsilon \to 0$ and $k\to \infty$, in this order, obtaining the expected inequality \eqref{eq:Byparts}.  
	
	Taking into account that $f_{k,\varepsilon}^\delta=0$ on $\partial E$, and $\Delta_A\phi\leq \Delta_{\mathcal D'}\phi$ in the distributional sense, the divergence theorem implies 
	\begin{eqnarray}\label{eq:GaussGreen}
	\nonumber	\int_E (f_{k,\varepsilon}^\delta)^{q}\omega_1^\frac{1}{p'}\omega_2^\frac{1}{p} \Delta_A\phi dx&\leq& \int_E (f_{k,\varepsilon}^\delta)^{q}\omega_1^\frac{1}{p'}\omega_2^\frac{1}{p} \Delta_{\mathcal D'}\phi dx\\&=& -q\int_E (f_{k,\varepsilon}^\delta)^{q-1}\nabla f_{k,\varepsilon}^\delta\cdot \nabla \phi \omega_1^\frac{1}{p'}\omega_2^\frac{1}{p} dx	\nonumber \\&&-\int_E (f_{k,\varepsilon}^\delta)^{q}\nabla (\omega_1^\frac{1}{p'}\omega_2^\frac{1}{p}) \cdot \nabla \phi dx.
	\end{eqnarray}
	Since $\phi$ is convex, thus $\Delta_A\phi\geq 0$ on $E$, and $\nabla (\omega_1^\frac{1}{p'}\omega_2^\frac{1}{p}) (x)\cdot \nabla\phi(x)\geq 0$ for a.e.  $x\in E$, where we used the assumption that $\nabla\phi(x)\in \overline E$ for a.e. $x\in \overline E$, it turns out that
	\begin{equation}\label{I-positive}
	I_{\omega_1,\omega_2,\phi}:=\omega_1^\frac{1}{p'}\omega_2^\frac{1}{p} \Delta_A \phi + \nabla (\omega_1^\frac{1}{p'}\omega_2^\frac{1}{p}) \cdot \nabla \phi \geq 0\ \ {\rm a.e.\ on}\ E.
	\end{equation}
	Thus, inequality \eqref{eq:GaussGreen} reads as
	\begin{equation}\label{eq:GaussGreen2}
	\int_E (f_{k,\varepsilon}^\delta)^{q} I_{\omega_1,\omega_2,\phi}(x)  dx\leq-q\int_E (f_{k,\varepsilon}^\delta)^{q-1}\nabla f_{k,\varepsilon}^\delta\cdot \nabla \phi \omega_1^\frac{1}{p'}\omega_2^\frac{1}{p} dx.
	\end{equation}
	
	Let us observe first that 
	\[
	\int_E (f_{k,\varepsilon}^\delta)^{q-1}\nabla f_{k,\varepsilon}^\delta\cdot \nabla \phi \omega_1^\frac{1}{p'}\omega_2^\frac{1}{p} dx\rightarrow \int_E f_{k,\varepsilon}^{q-1}\nabla f_{k,\varepsilon}\cdot \nabla \phi \omega_1^\frac{1}{p'}\omega_2^\frac{1}{p} dx  \quad \text{as $\delta\to0,$}\]
	which is due to the fact that 
	$\nabla \phi$ is essentially bounded on ${\rm supp}(f_{k,\varepsilon}^\delta)$, and  $(f_{k,\varepsilon}^\delta)^{q-1}\nabla \phi$ converges in $L^{p'}(\omega_1;E)$ to $(f_{k,\varepsilon})^{q-1}\nabla \phi$, and $\nabla f_{k,\varepsilon}^\delta$ converges in $L^p(\omega_2;E)$ to $\nabla f_{k,\varepsilon}$ as $\delta\to 0$, coming from properties of convolution and superposition operators based on the dominated convergence theorem. 
	Thus, letting $\delta\to 0$ in \eqref{eq:GaussGreen2}, the latter properties together with  Fatou's lemma and  \eqref{I-positive} imply  that
	\begin{equation}\label{eq:GaussGreen3}
	\int_E f_{k,\varepsilon}^{q} I_{\omega_1,\omega_2,\phi}(x)  dx\leq-q\int_E f_{k,\varepsilon}^{q-1}\nabla f_{k,\varepsilon}\cdot \nabla \phi \omega_1^\frac{1}{p'}\omega_2^\frac{1}{p} dx.
	\end{equation}
	
	The next step is to take the limit in \eqref{eq:GaussGreen3} whenever $\varepsilon\to 0.$ By the definition of $f_{k,\varepsilon}$, we have that, passing to a subsequence of $\varepsilon=(\varepsilon_{k,l})_{l\in \mathbb N}$, the sequence $f_{k,\varepsilon}$ converges to $f_k$ a.e.\ as $\varepsilon\to 0.$ 
	%
	%
	In addition, by construction we have that $ f_{k,\varepsilon}\leq f_k\leq u$ and  by assumption $f^{q-1}\nabla\phi\in L^{p'}(\omega_1;E)$;  accordingly,  the dominated convergence theorem implies  that
	\[
	f_{k,\varepsilon}^{q-1}\nabla\phi\to f_{k}^{q-1}\nabla\phi \quad \text{in $L^{p'}(\omega_1;E)$ as $\varepsilon\to 0$}.
	\]
	One can also prove that $\nabla 	f_{k,\varepsilon}$ converges to $\nabla 	f_{k}$ in the sense of distributions and $\nabla 	f_{k,\varepsilon}$ is uniformly bounded in $L^p(E)$ w.r.t.\ $\varepsilon>0;$ hence, $\nabla 	f_{k,\varepsilon}$ converges weakly in $L^p(E)$ to $\nabla f_k$ as $\varepsilon\to 0.$
	Combining the last two facts, it turns out that
	\[
	\int_E f_{k,\varepsilon}^{q-1}\nabla f_{k,\varepsilon}\cdot \nabla \phi \omega_1^\frac{1}{p'}\omega_2^\frac{1}{p} dx \to \int_E f_{k}^{q-1}\nabla f_{k}\cdot \nabla \phi \omega_1^\frac{1}{p'}\omega_2^\frac{1}{p} dx \quad \text{as $\varepsilon\to 0$}.
	\]
	By Fatou's lemma, once we let $\varepsilon\to 0$ in \eqref{eq:GaussGreen3}, we obtain that
	\begin{equation}\label{final-1}
	\int_E f_{k}^{q} I_{\omega_1,\omega_2,\phi}(x)  dx\leq-q\int_E f_{k}^{q-1}\nabla f_{k}\cdot \nabla \phi \omega_1^\frac{1}{p'}\omega_2^\frac{1}{p} dx.
	\end{equation}
	
	As a final step, we take the limit $k\to \infty$ in \eqref{final-1}. We observe that since $x\mapsto d(x,\partial E)$ is locally Lipschitz, it is differentiable a.e.\ in $E$; furthermore, for a.e.\ $x\in E$ one has that $\nabla d(\cdot,\partial E)(x)=-{\bf n}(x^*)$, where $x^*\in \partial E$ is the unique point with  $|x-x^*|=d(x,\partial E)$ and $\textbf{n}(x^*)$ is the unit outer normal vector at $x^*\in \partial E$. Since $E$ is convex and $\nabla\phi(x)\in \overline E$ for a.e.\ $x\in \overline E$, it turns out that 
	$\textbf{n}(x^*)\cdot \nabla\phi(x)\leq 0$ for a.e. $x\in E$. In particular,  the monotonicity of $\theta_k^0$ implies that for a.e. $x\in E$ 
	one has 
	\begin{eqnarray*}
		\nabla f_{k}(x)\cdot \nabla \phi(x)&=&\theta_k(x)\nabla f(x)\cdot \nabla \phi(x)-f(x)(\theta_k^0)'(d(x,\partial E))\textbf{n}(x^*)\cdot \nabla \phi(x)\\&\geq &\theta_k(x)\nabla f(x)\cdot \nabla \phi(x).
	\end{eqnarray*}
	The above arguments together with 		
	%
	\eqref{final-1} imply
	\[
		\int_E \theta_k^{q} f^{q} I_{\omega_1,\omega_2,\phi}(x)  dx\leq-q\int_E \theta_k^{q}f^{q-1}\nabla f\cdot \nabla \phi \omega_1^\frac{1}{p'}\omega_2^\frac{1}{p} dx.
	\]
	Letting $k\to \infty$ and taking into account that $\theta_k\to 1$ for a.e. $x\in E$ as $k\to \infty$,  Fatou's lemma  and the dominated convergence theorem imply the  inequality \eqref{eq:Byparts}. 
\end{proof}

\begin{remark}\rm \label{remark-appendix}
	When we apply Proposition \ref{prop:integrationbyparts} in the proof of Theorems \ref{th:CKN2weights} and \ref{th:CKN2weights-2} we shall choose $q= \alpha p \gamma = \frac{p \gamma}{p\gamma- p +1}>1$. From the proof of Theorems \ref{th:CKN2weights} and \ref{th:CKN2weights-2} it will be clear that the integrability assumptions $f^{\alpha p\gamma-1}\nabla\phi\in L^{p'}(\omega_1;E)$ and  $\nabla f\in L^{p}(\omega_2;E)$ are satisfied and thus Proposition 
	\ref{prop:integrationbyparts} applies.  We notice that both terms 
	$	\displaystyle\int_E f^{\alpha p\gamma}\omega_1^\frac{1}{p'}\omega_2^\frac{1}{p} \Delta_A\phi dx$ and  $\displaystyle\int_E f^{\alpha p\gamma} \nabla (\omega_1^\frac{1}{p'}\omega_2^\frac{1}{p}) \cdot \nabla \phi dx$ are finite. Indeed, both of them are non-negative, and due to inequality  \eqref{eq:Byparts},  their sum is bounded from above by $$\alpha p\gamma\left|\int_E f^{\alpha p\gamma-1}\omega_1^\frac{1}{p'}\omega_2^\frac{1}{p} \nabla \phi \cdot \nabla f dx\right|\leq \alpha p\gamma\left(\int_E f^{(\alpha p\gamma-1)p'}|\nabla \phi|^{p'}\omega_1 dx\right)^\frac{1}{p'}\left(\int_E |\nabla f|^p\omega_2\right)^\frac{1}{p}<+\infty.$$
	\end{remark}

After this preparatory part, we focus to the proof of Theorems \ref{th:CKN2weights} and \ref{th:CKN2weights-2}. 
As mentioned in the introduction the strategy of the proof is based on optimal mass transport theory. For more details on this method we refer to \cite{Villani}. To start the proof, we assume that $\gamma\neq 1$ and  we consider the space of functions 
\[
\mathcal G_{\omega_1,\omega_2}\coloneqq\left\{G\colon E\to [0,+\infty): G\in L^1(\omega_1;E)\cap L^\gamma(\omega_1^\frac{1}{p'}\omega_2^\frac{1}{p};E),\ \int_E G(y)|y|^{p'}\omega_1(y)dy<+\infty\right\}.
\]
Clearly, non-negative functions of $C_0^\infty(\mathbb R^n)$ belong to $ \mathcal G_{\omega_1,\omega_2}.$ 

We consider $u\in \dot{W}^{p,\alpha}(\omega_1,\omega_2;E)\setminus \{0\}$ and let 
	\begin{equation}\label{f-u}
	f\coloneqq\frac {|u|}{\left(\displaystyle \int_E |u|^{\alpha p}\omega_1 dx\right)^{\frac{1}{\alpha p}}}\geq 0.
\end{equation}
	If $\displaystyle \int_E|u|^{\alpha p \gamma}\omega_3 dx=+\infty$, we have nothing to prove in \eqref{eq:CKN2weights}; thus, we may consider without loss of generality that it is finite, i.e., $\displaystyle \int_E f^{\alpha p \gamma}\omega_3 dx<+\infty$; for \eqref{eq:CKN2weightsgamma>1} this fact will follow as we shall see in the sequel. 
	Let $G\in \mathcal G_{\omega_1,\omega_2}$ be  a function  such that
	\[
	\int_E G \omega_1dx=\int_E f^{\alpha p}\omega_1 dx=1.
	\]
	Let $U\coloneqq {\rm supp} u\subset \mathbb R^n.$ We can assume that $E\cap U\neq \emptyset$; otherwise, both \eqref{eq:CKN2weights} and \eqref{eq:CKN2weightsgamma>1} become trivial.   By Brenier's theorem (see e.g.\ \cite{VillaniBook}) we can find a convex function $\phi\colon E \to \mathbb R$ such that the Monge-Amp\`ere equation holds, i.e., 
	\begin{equation}\label{Monge-Ampere}
			f^{\alpha p}(x)\omega_1(x)= G(\nabla\phi(x))\omega_1(\nabla\phi(x))\det(D_A^2\phi(x))\ \ {\rm a.e.}\ x\in E\cap U,
	\end{equation}
	or equivalently,
	\begin{equation}\label{eq:MAgamma}
	\frac{1}{1-\gamma}G^{\gamma-1}(\nabla\phi(x))\omega_1^{\gamma-1}(\nabla\phi (x))=f^{\alpha p(\gamma-1)}(x)\omega_1^{\gamma-1}(x) \frac{\det^{1-\gamma} (D_A^2\phi(x))}{1-\gamma}\ \ {\rm a.e.}\ x\in E\cap U,
	\end{equation}
where $D_A^2\phi$ denotes the Hessian of $\phi$ in the sense of Alexandrov (being the absolutely continuous part of the distributional Hessian of $\phi$), see Villani \cite{Villani}. In a similar way, let $\Delta_A\phi={\rm tr}D_A^2\phi$ be the Laplacian of $\phi$. 

	Multiplying both sides of \eqref{eq:MAgamma} by $\omega_1^{\frac1{p'}-\gamma}(\nabla\phi(x)) \omega_2^{\frac 1p}(\nabla\phi(x))$ and integrating with respect to the measure $f^{\alpha p}(x)\omega_1(x) dx=G(\nabla\phi(x))\omega_1(\nabla\phi(x))\det(D_A^2\phi(x))dx$ we obtain
	\begin{equation}\label{eq:MAintegrated}
	\begin{aligned}
	\frac {C_0^{(1-\gamma)n}}{1-\gamma}\int_{E\cap U}&G^\gamma(\nabla \phi(x))\omega_1^{\frac1{p'}}(\nabla\phi (x))\omega_2^{\frac 1p}(\nabla\phi(x))\det (D_A^2\phi(x))dx\\&=\frac{1}{1-\gamma}\int_{E\cap U} f^{\alpha p\gamma}(x) \omega_1(x)^\gamma {\rm det} ^{1-\gamma}(C_0D_A^2\phi(x))\omega_1(\nabla\phi (x))^{\frac{1}{p'}-\gamma}\omega_2(\nabla\phi(x))^{\frac 1p}dx.
	\end{aligned}
	\end{equation}
	Since $G\in  \mathcal G_{\omega_1,\omega_2}$,  a change of variables shows that  the left-hand side is integrable; thus the right-hand side (denoted by RHS) is also integrable. 
By  Lemma \ref{tr-det-inequality}, for  a.e.\ $x\in {E\cap U}$	we obtain that
	\begin{equation}\label{eq:from21toC}
	\begin{aligned}
	&\frac1{1-\gamma}\frac{\omega_2(\nabla\phi(x))^{\frac 1p}\omega_1(\nabla\phi(x))^{\frac 1{p'}-\gamma}}{\omega_1^{1-\gamma}(x)}\cdot \det(C_0D_A^2\phi(x))^{1-\gamma}\\=&\frac 1{1-\gamma}\left[\left(\frac{\omega_2(\nabla\phi(x))^{\frac 1p}\omega_1(\nabla\phi(x))^{\frac 1{p'}-\gamma}}{\omega_2(x)^{\frac np(1-\gamma)}\omega_1(x)^{\left(1-\frac np\right)(1-\gamma)}}\right)^{\frac1{1-n(1-\gamma)}}\right]^{1-n(1-\gamma)}\left[\det\left(C_0\frac{\omega_2(x)^{\frac 1p}}{\omega_1(x)^{\frac 1p}}D_A^2\phi(x)\right)\right]^{1-\gamma}\\
	\leq&\left(\frac1{1-\gamma}-n\right)\left(\frac{\omega_2(\nabla\phi(x))^{\frac 1p}\omega_1(\nabla\phi(x))^{\frac 1{p'}-\gamma}}{\omega_2(x)^{\frac np(1-\gamma)}\omega_1(x)^{\left(1-\frac np\right)(1-\gamma)}}\right)^{\frac1{1-n(1-\gamma)}}+C_0\left(\frac{\omega_2(x)}{\omega_1(x)}\right)^{\frac 1p}\Delta_A\phi(x)\\
	\leq&
	\left(\frac{1}{1-\gamma}+K\right)\frac{\omega_3(x)}{\omega_1(x)}+C_0\frac{\nabla\left(\omega_2^{\frac 1p}\omega_1^{\frac 1{p'}}\right)(x)}{\omega_1(x)}\cdot \nabla\phi(x)+C_0\left(\frac{\omega_2(x)}{\omega_1(x)}\right)^{\frac 1p}\Delta_A\phi(x),
	\end{aligned}
	\end{equation}
	where in the last inequality we used condition (C) with $y=\nabla\phi(x)\in E$, together with the fact that $\nabla\phi(E)\subseteq E$. 
		Using \eqref{eq:from21toC} we can estimate the right-hand side of \eqref{eq:MAintegrated}, to infer that  
	\begin{equation}\label{eq:stimaRHS}
	\begin{aligned}
	{\rm RHS}\leq&\left(\frac{1}{1-\gamma}+K\right) \int_{E}f^{\alpha p\gamma}(x) \omega_3(x)dx+C_0\int_{E\cap U} f^{\alpha p\gamma}(x)\nabla(\omega_2^{\frac 1p}\omega_1^{\frac 1{p'}})(x)\cdot \nabla \phi(x) dx\\ &+C_0\int_{E} f^{\alpha p\gamma}(x) \left(\frac{\omega_2(x)}{\omega_1(x)}\right)^{\frac 1p}\Delta_{ A}\phi(x)\omega_1(x)dx.
	\end{aligned}
	\end{equation}
	Note that since $f\in \dot{W}^{p,\alpha}(\omega_1,\omega_2;E)\setminus \{0\}$, we have that $\nabla f\in L^p(\omega_2;E)$, while from the Monge-Amp\`ere equation \eqref{Monge-Ampere} and a change of variables together with $(\alpha p\gamma -1)p'=\alpha p$ we obtain that 
$f^{\alpha p\gamma-1}\nabla\phi\in L^{p'}(\omega_1;E)$, since 
\begin{equation}\label{equation-inter-1}
	\int_E f^{(\alpha p\gamma-1)p'}(x)|\nabla\phi(x)|^{p'}\omega_1(x)dx=\int_E G(y)|y|^{p'}\omega_1(y)dy<+\infty,
\end{equation}
where in the last step we used the fact that $G\in \mathcal G_{\omega_1,\omega_2}.$ Since condition (C) implies that $$
	{\nabla(\omega_2^{\frac 1p}\omega_1^{\frac{1}{p'}})(x)}\cdot y\geq 0,\ \ x,y\in E,$$
 see  \eqref{gradient-nonnegative}, we are in the position to apply Proposition \ref{prop:integrationbyparts} with $q=\alpha p \gamma$ (see also Remark \ref{remark-appendix}), obtaining that
%
	\begin{equation}\label{eq:stimaRHS-2}
		\begin{aligned}
			{\rm RHS}\leq&  \left(\frac{1}{1-\gamma}+K\right) \int_Ef^{\alpha p\gamma} \omega_3(x)dx-C_0\alpha p\gamma\int_E f^{\alpha p\gamma-1}\omega_1^\frac{1}{p'}\omega_2^\frac{1}{p} \nabla \phi \cdot \nabla f dx.
		\end{aligned}
	\end{equation}
%
%
%
%
	By  Young's inequality we can write for any $\mu>0$ that 
	\begin{equation}\label{eq:CauchySchwartz}
	\begin{aligned}
	& -\alpha p\int_E f^{\alpha p \gamma-1}\nabla f\cdot \nabla \phi \,\omega_2^{\frac 1p} \omega_1^{\frac 1{p'}}dx
	 \leq \frac{\alpha}{\mu^p}\int_E |\nabla f|^p\omega_2dx+\frac{\alpha p \mu^{p'}}{p'}\int_E f^{(\alpha p\gamma -1)p'} |\nabla \phi|^{p'} \omega_1dx.
	\end{aligned}
	\end{equation}
	
	 By the Monge-Amp\`ere equation \eqref{Monge-Ampere}, relations \eqref{eq:MAintegrated} and  \eqref{equation-inter-1}, and inequalities  \eqref{eq:stimaRHS-2} and \eqref{eq:CauchySchwartz}, we have that 
%
	\begin{equation}\label{eq:2.7}
	\begin{aligned}
	\frac {C_0^{(1-\gamma)n}}{1-\gamma}&\int_E G^\gamma(y)\omega_1^{\frac 1{p'}}(y)\omega_2^{\frac1{p}}(y)dy-\frac{C_0\gamma(p-1)\mu^{p'}}{p(\gamma-1)+1}\int_E G(y)|y|^{p'}\omega_1(y)dy\\ &\leq
	\left(\frac 1{1-\gamma}+K\right)\int_E f^{\alpha p \gamma}\omega_3 dx+\frac{C_0\gamma}{\mu^{p}(p(\gamma-1)+1)}\int_E|\nabla f|^p\omega_2dx.
	\end{aligned}
	\end{equation}

In what follows we shall distinguish two cases according to $\gamma < 1$ and $\gamma >1$.\\ 

\noindent \textbf{Proof of Theorem \ref{th:CKN2weights}}
 ($\gamma<1$).  Let us define 
\begin{eqnarray}\label{K-1-definition}
	\nonumber	K_1&\coloneqq&K_1(C_0,\gamma,\omega_1,\omega_2,\mu)\\&=&\frac {C_0^{(1-\gamma)n}}{1-\gamma}\int_E G^\gamma(y)\omega_1^{\frac 1{p'}}(y)\omega_2^{\frac 1p}(y)dy-\frac{C_0\gamma(p-1)\mu^{p'}}{p(\gamma-1)+1}\int_E G(y)|y|^{p'}\omega_1(y)dy.
\end{eqnarray}
 We notice that for every sufficiently small $\mu>0$, we have $K_1>0$; we fix such a value of $\mu>0$.   
Inequality \eqref{eq:2.7} implies that
	\begin{equation}\label{eq:beforeoptimum}
	K_1\leq\left(\frac 1{1-\gamma}+K\right)\int_E f^{\alpha p \gamma}\omega_3 dx+\frac{C_0\gamma}{\mu^{p}(p(\gamma-1)+1)}\int_E|\nabla f|^p\omega_2dx.
	\end{equation}
 Using \eqref{f-u}, inequality \eqref{eq:beforeoptimum} becomes
	\begin{equation}\label{eq:renormalized}
	K_1\leq K_2\frac{\displaystyle\int_E |u|^{\alpha p \gamma}\omega_3 dx}{\left(\displaystyle\int_E |u|^{\alpha p }\omega_1 dx\right)^{\gamma}}+K_3\frac{\displaystyle\int_E|\nabla u|^p\omega_2dx}{\left(\displaystyle\int_E |u|^{\alpha p} \omega_1 dx\right)^{\frac 1\alpha}},
	\end{equation}
	where we have set 
	\begin{equation}\label{K2-K3}
			K_2\coloneqq \frac 1{1-\gamma}+K\ \ {\rm and} \ \ K_3\coloneqq\frac{C_0\gamma}{\mu^{p}(p(\gamma-1)+1)}>0.
	\end{equation}
	 Note that since $\gamma < 1$ we have  $K_2\geq 0$; indeed, if $y\to 0$ in the condition (C), the latter property immediately follows.

	To continue the proof we apply a rescaling of the function $u$ and we shall optimise in terms of the rescaling parameter: we shall replace $u$ by $u_\lambda(x)\coloneqq \lambda^{\frac{n+\tau_1}{\alpha p}}u(\lambda x)$, $\lambda>0$. By a standard change of variable one can verify that $\displaystyle\int_E |u_\lambda|^{\alpha p} \omega_1dx=\int_E |u|^{\alpha p}\omega_1 dx$ and equation \eqref{eq:renormalized} is then equivalent to
	\begin{equation}\label{eq:f_lambda}
	K_1\leq K_2 \lambda^{-L}\frac{\|u\|^{\alpha p\gamma}_{\omega_3,\alpha p \gamma}}{\|u\|^{\alpha p\gamma}_{\omega_1,\alpha p }}+K_3\lambda^M\frac{\|\nabla u\|^{p}_{\omega_2,p}}{\|u\|^{p}_{\omega_1,\alpha p }}, \quad \lambda>0,
	\end{equation}
where we recall the notations
\begin{equation}\label{L_and_M}
	L=-(n+\tau_1)\gamma+(n+\tau_3)\ \ {\rm  and}\ \ M=p+\frac{n+\tau_1}{\alpha}-(n+\tau_2).
\end{equation}
We notice that by our assumptions we have $L>0$ and $M\geq 0$.
We distinguish three cases, depending on the values of $K_2$ and $M$.

\textit{Case a:} $K_2> 0$ and $M>0$. In this case we shall optimize \eqref{eq:f_lambda} w.r.t. $\lambda>0$, which occurs whenever  
$$
-K_2L\lambda^{-L-1}\frac{\|u\|^{\alpha p\gamma}_{\omega_3,\alpha p \gamma}}{\|u\|^{\alpha p\gamma}_{\omega_1,\alpha p }}+K_3M\lambda^{M-1}\frac{\|\nabla u\|^{p}_{\omega_2,p}}{\|u\|^{p}_{\omega_1,\alpha p }}=0,
$$
i.e.,
$$\lambda=\left[\frac{K_2}{K_3}\frac{L}{M}\frac{\|u\|^{\alpha p \gamma}_{\omega_3, \alpha p \gamma}}{\|\nabla u\|^p_{\omega_2, p}\, \|u\|^{p(\alpha \gamma-1)}_{\omega_1, \alpha p}}\right]^{\frac 1{L+M}}.
$$
Replacing this value into \eqref{eq:f_lambda} we get
	\begin{equation}\label{eq:2.12}
	K_1\leq K_2^{\frac{M}{L+M}}K_3^{\frac{L}{L+M}}\frac{\|\nabla u\|^{p\frac{L}{L+M}}_{\omega_2, p}\, \|u\|^{\alpha p \gamma\frac{M}{L+M}}_{\omega_3,\alpha p\gamma}}{\|u\|^{\alpha p\gamma\frac{M}{L+M}+p\frac{L}{L+M}}_{\omega_1, \alpha p}}\left[\left(\frac{M}{L}\right)^{\frac{L}{L+M}}+\left(\frac LM\right)^{\frac{M}{L+M}}\right].
	\end{equation}
	Define 
	\[
	\theta_1=\frac{\frac{L}{L+M}}{\alpha  \gamma \frac{M}{L+M}+\frac{L}{L+M}}=\frac L{\alpha \gamma M+L}=\frac{-(n+\tau_1)\gamma+n+\tau_3}{\alpha \gamma (p-\tau_2-n)+n+\tau_3}.
	\]
Note, that by our assumptions we have that $\theta_1 \in (0, 1]$.  Hence \eqref{eq:2.12} gives the constant 
	 \begin{equation}\label{C-1-constant}
	  C_1=\left(K_1^{-1}K_2^{\frac{M}{L+M}}K_3^{\frac{L}{L+M}}\left[\left(\frac{M}{L}\right)^{\frac{L}{L+M}}+\left(\frac LM\right)^{\frac{M}{L+M}}\right]\right)^\frac{1}{p\left(\alpha  \gamma \frac{M}{L+M}+\frac{L}{L+M}\right)}>0	
	 \end{equation}
	  such that
	\begin{equation}\label{GN-1-1}
		\left(\int_E |u|^{\alpha p}\omega_1dx\right)^{\frac 1{\alpha p}}\leq C_1 \left(\int_E|\nabla u|^p\omega_2 dx\right)^{\frac {\theta_1} p}\left(\int_E|u|^{\alpha p \gamma}\omega_3 dx\right)^{\frac{1-\theta_1}{\alpha p \gamma}}.
	\end{equation}
	When we want to minimize the expression $K_1^{-1}K_3^{\frac{L}{L+M}}$ as a function of $\mu>0$,  it turns out that the optimal value for $\mu>0$ will be given precisely by  
	\begin{equation}\label{mu-choice}
		\mu:=\left(\frac{C_0^{-1+(1-\gamma)n}}{(1-\gamma)\gamma\alpha}\frac{L}{M+pL}\frac{\displaystyle\int_E G^\gamma(y)\omega_1^{\frac 1{p'}}(y)\omega_2^{\frac 1p}(y)dy}{\displaystyle\int_E G(y)|y|^{p'}\omega_1(y)dy}\right)^{1/p'}.
	\end{equation}
Now, after a straightforward computation,  the constant appearing in \eqref{C-1-optimal} is nothing but 
\[
{\sf C}_1\coloneqq \inf_{G\in \mathcal G_{\omega_1,\omega_2};\int_E G\omega_1=1} C_1 .
\]	
		\textit{ Case b:} $K_2=0$. 
	According to \eqref{eq:f_lambda},  we necessarily have that $M=0$; otherwise, by letting $\lambda\to 0$ in \eqref{eq:f_lambda}, we obtain $0<K_1\leq 0$, a contradiction. In this case  \eqref{eq:f_lambda} reduces to 
	the weighted Sobolev inequality 
		\begin{equation}\label{Sobolev-particular}
			\|u\|_{\omega_1,\alpha p }\leq \left({K_1}^{-1}K_3\right)^\frac{1}{p}\|\nabla u\|_{\omega_2,p}.
		\end{equation}
	Note that  the constant $C_1>0$ from 
	\eqref{C-1-constant} formally also reduces to $\left({K_1}^{-1}K_3\right)^\frac{1}{p}$ and $\theta_1=1$, thus \eqref{Sobolev-particular} appears as the limit case into \eqref{GN-1-1}. 
	
	\textit{Case c:} $K_2> 0$ and $M=0$. In this case, by letting $\lambda\to +\infty$ in \eqref{eq:f_lambda}, we obtain exactly the same inequality as \eqref{Sobolev-particular}. The rest is the same as in \textit{Case b}.  \hfill $\square$\\

%

	\noindent \textbf{Proof of Theorem \ref{th:CKN2weights-2} $(\gamma>1$).} We can reason as in the case $\gamma<1$; by  \eqref{eq:2.7} we obtain that
	\begin{equation}\label{eq:2.16}
	\begin{aligned}
	\frac {C_0^{(1-\gamma)n}}{\gamma-1}&\int_EG^\gamma(y)\omega_1^{\frac1{p'}}(y)\omega_2^{\frac 1p}(y)dy+\frac{C_0\gamma (p-1)\mu^{p'}}{p(\gamma-1)+1}\int_E G(y)|y|^{p'}\omega_1(y)dy\\&\geq\left(\frac{1}{\gamma-1}-K\right)\int_E f^{\alpha p \gamma}\omega_3dx-\frac{C_0\gamma}{\mu^p(p(\gamma-1)+1)}\int_E |\nabla f|^p\omega_2dx.
	\end{aligned}
	\end{equation}
In particular, since $1+K(1-\gamma)>0$,  by \eqref{eq:2.16} it turns out that $0<\displaystyle\int_E f^{\alpha p \gamma}\omega_3dx<+\infty.$

	Let
	\[
\tilde K_1\coloneqq	\frac {C_0^{(1-\gamma)n}}{\gamma-1}\int_E G^\gamma(y)\omega_1(y)^{\frac1{p'}}\omega_2(y)^{\frac 1p}dy+\frac{C_0\gamma(p-1)\mu^{p'}}{p(\gamma-1)+1}\int_E G(y)|y|^{p'}\omega_1(y)dy>0,
	\]
	 Then \eqref{eq:2.16} implies
	\begin{equation}\label{eq:beforeoptimumconverse}
	\tilde K_1\geq \tilde K_2\int_E f^{\alpha p \gamma}\omega_3 dx-\tilde K_3\int_E|\nabla f|^p\omega_2dx,
	\end{equation}
	where $$\tilde K_2=\frac 1{\gamma-1}-K\ \ {\rm  and}\ \ \tilde K_3=\frac{C_0\gamma}{\mu^p(p(\gamma-1)+1)};$$  note that we have  $\tilde K_2>0$ by using our assumption that $1+K(1-\gamma)>0$.
By \eqref{f-u} and 
 \eqref{eq:beforeoptimumconverse}, it follows that
	\begin{equation}\label{eq:renormalizedconverse}
	\tilde K_1\geq \tilde K_2\frac{\displaystyle\int_E |u|^{\alpha p \gamma}\omega_3 dx}{\left(\displaystyle\int_E |u|^{\alpha p }\omega_1 dx\right)^{\gamma}}-\tilde K_3\frac{\displaystyle\int_E|\nabla u|^p\omega_2dx}{\left(\displaystyle\int_E |u|^{\alpha p} \omega_1 dx\right)^{\frac 1\alpha}}.
	\end{equation}
	We now replace $u$ with $u_\lambda(x)=\lambda^{\frac{n+\tau_1}{\alpha p}}u(\lambda x)$, $\lambda>0$, thus $\displaystyle\int_E |u_\lambda|^{\alpha p} \omega_1dx=\int_E |u|^{\alpha p}\omega_1 dx$ and \eqref{eq:renormalizedconverse} transforms into 
	\begin{equation}\label{eq:lambdamax}
	\tilde K_1\geq \tilde K_2\lambda^{(n+\tau_1)\gamma-(n+\tau_3)}\frac{\|u\|^{\alpha p \gamma}_{\omega_3,\alpha p\gamma}}{\|u\|^{\alpha p\gamma}_{\omega_1, \alpha p}}-\tilde K_3\lambda^{\frac{n+\tau_1}{\alpha}-(n+\tau_2)+p}\frac{\|\nabla u\|^p_{\omega_2, p}}{\|u\|^p_{\omega_1, \alpha p}}.
	\end{equation}
 As  in \eqref{L-M}, we denote by  
 \[
 L\coloneqq{-(n+\tau_1)\gamma+(n+\tau_3)}\ \ {\rm  and} \ \  M\coloneqq p+\frac{n+\tau_1}{\alpha}-(n+\tau_2).
 \]
 Recall that by the observations made before, $\tau_3=\frac{\tau_2}p+\frac{\tau_1}{p'}$ and  $\tau_1(\frac1{p'}-\gamma)+\frac{\tau_2}{p}\leq 0$. The latter fact combined with the assumption implies that  $0<-L< M$. 
		
 The right-hand side of \eqref{eq:lambdamax} has then a maximum, given by
\[
\lambda=\left(\frac {-L}{ M}\frac{\tilde K_2}{\tilde K_3}\frac{\|u\|^{\alpha p \gamma}_{\omega_3, \alpha p \gamma}}{\|u\|^{\alpha p \gamma-p}_{\omega_1, \alpha p}\|\nabla u\|^p_{\omega_2, p}}\right)^{\frac1{M+ L}}.
\]
	Replacing this value inside \eqref{eq:lambdamax} we obtain
	\[
	\tilde K_1\geq \frac{\tilde K_2^{\frac{M}{M+L}}}{\tilde K_3^{\frac{-L}{{M+L}}}}\left(\left(\frac {-L} M\right)^{\frac{-L}{M+L}}-\left(\frac {-L} M\right)^{\frac{M}{M+L}}\right)\frac{\|u\|^{\alpha p \gamma \frac{M}{M+L}}_{\omega_3, \alpha p \gamma}}{\|u\|^{\alpha p \gamma \frac{M}{M+L}+p\frac{L}{M+L}}_{\omega_1,\alpha p} \|\nabla u\|^{p\frac {-L}{M+L}}_{\omega_2,p}}.
	\]
By choosing
	\[
	\theta_2=-\frac{L}{\alpha \gamma M}=\frac{(n+\tau_1)\gamma-n-\tau_3}{(n+\tau_1)\gamma+\alpha\gamma(p-n-\tau_2)}\in (0,1)
	\] and 
	\begin{equation}\label{C-2-constant}
		C_2=\left(\tilde K_1\frac{\tilde K_2^{\frac{-M}{M+L}}}{\tilde K_3^{\frac{L}{{M+L}}}}\left(\frac {-L} M\right)^{\frac{L}{M+L}}\frac M {M+L}\right)^{\frac{M+L}{\alpha p \gamma M}}>0,
	\end{equation}
we obtain that	
\begin{equation}\label{G-N-second}
		\left(\int_E |u|^{\alpha p\gamma}\omega_3 dx \right)^{\frac 1{\alpha p \gamma}}\leq {C}_2\left(\int_E |\nabla u|^p\omega_2dx\right)^{\frac{\theta_2}{p}} \left(\int_E |u|^{\alpha p}\omega_1 dx\right)^{\frac{1-\theta_2}{\alpha p}}.
\end{equation}
	
		Once we minimize the expression $\tilde K_1\tilde K_3^{\frac{-L}{L+M}}$ as a function of $\mu>0$,   the optimal value for $\mu>0$ is given  by  
	\begin{equation}\label{mu-choice-2}
		\mu:=\left(-\frac{C_0^{-1+(1-\gamma)n}}{(\gamma-1)\gamma\alpha}\frac{L}{M+pL}\frac{\displaystyle\int_E G^\gamma(y)\omega_1^{\frac 1{p'}}(y)\omega_2^{\frac 1p}(y)dy}{\displaystyle\int_E G(y)|y|^{p'}\omega_1(y)dy}\right)^{1/p'}.
	\end{equation}
	An elementary computation implies that   the constant appearing in \eqref{C-2-optimal} will be 
	$${\sf C}_2:=\inf_{G\in \mathcal G_{\omega_1,\omega_2};\int_E G\omega_1=1} C_2,$$
	which concludes the proof. 	
\hfill $\square$

\begin{remark}\label{remark-assumption}\rm (i)
	Assumption $1+K(1-\gamma)>0$ is crucial in Theorem \ref{th:CKN2weights-2}; indeed, if we  assume the contrary, then by $\gamma>1$ we obtain that $\frac{1}{1-\gamma}+K\geq 0$. Thus, the latter relation implies that inequality \eqref{eq:conditionC} looses its meaning, reducing to the trivial fact that the LHS is negative  while the RHS is positive in  condition (C). 
	
	(ii) One can observe from the proof of Theorems \ref{th:CKN2weights} and \ref{th:CKN2weights-2} that condition (C) is sufficient to be valid for a.e.\ $x\in E$ and for every $y\in E$. Moreover, the regularity of the weights can be also relaxed to be only differentiable a.e.\ on $E$; observe that for $\omega_3$ we need no such  regularity assumption. 
	
\end{remark}

\begin{remark}\rm  Note that  Theorems \ref{th:CKN2weights} and \ref{th:CKN2weights-2} can be stated by replacing the Euclidean norm $|\cdot|$ on $E$ by a general norm $\|\cdot\|$, similar to \cite{CENV} and \cite{Lam}. To be more precise, let $\|\cdot\|_*$ be the  dual norm  of an arbitrary norm $\|\cdot\|$ given by $\|X\|_*=\sup_{\|Y\|\leq 1}X\cdot Y$, where $X\cdot Y$ stands for the usual Euclidean inner product of $X,Y\in \mathbb R^n$. 	In this setting,  in \eqref{eq:CKN2weights} the expression of the gradient norm is replaced by $\displaystyle \int_E \|\nabla u\|_*^p\omega_2 dx$.  In fact, the main modification of the proof is in the estimate \eqref{eq:CauchySchwartz}, where we use the inequality $X\cdot Y \leq \|X\|_* \|Y\|$ for every  $X,Y\in \mathbb R^n$, combined with the Young inequality, obtaining that
	$$
	\begin{aligned}
		& -\alpha p\int_E f^{\alpha p \gamma-1}\nabla f\cdot \nabla \phi \,\omega_2^{\frac 1p} \omega_1^{\frac 1{p'}}dx
		\leq \frac{\alpha}{\mu^p}\int_E \|\nabla f\|_*^p\omega_2dx+\frac{\alpha p \mu^{p'}}{p'}\int_E f^{(\alpha p\gamma -1)p'} \|\nabla \phi\|^{p'} \omega_1dx.
	\end{aligned}
	$$
	The rest of the proof is the same. 
%
%

\end{remark}

\section{Applications}\label{section-3}

\subsection{{Sharp weighted Sobolev and Gagliardo-Nirenberg inequalities}}

\subsubsection{Weighted Sobolev inequalities with two weights}\label{subsection-PLMS} 	We shall show that the main result of \cite{BGK_PLMS} follows by applying Theorem \ref{th:CKN2weights}. To do that, let $E\subseteq \mathbb R^n$ be an open convex cone and   $\omega_1,\omega_2\colon E \to (0,+\infty)$ be two homogeneous weights with degree $\tau_1$ and $\tau_2.$ Let $q$ be (the critical exponent) defined by 
\[
\frac{\tau_1+n}{q}=\frac{\tau_2+n}{p}-1,
\]
and the fractional dimension $n_\tau>0$ given by  $\frac 1n_\tau=\frac 1p-\frac 1q$. 

Let us choose $\gamma=1-\frac{1}{n_\tau}$ in Theorem \ref{th:CKN2weights}. Since $\gamma>\max\{1-1/n,1/{p'}\}$, we have that $n_\tau>\max\{p,n\}.$ In particular, $q>0$, thus $\tau_2+n>p$. In fact, $q>p$, thus $\tau_2 \geq (1-\frac pn)\tau_1$. 
Choosing   $K=-\frac{1}{1-\gamma}$, we obtain that $K_2=0$ and $\theta_1=1$ in the proof of Theorem \ref{th:CKN2weights}, thus the Sobolev inequality \eqref{Sobolev-particular} holds, see also \cite{BGK_PLMS}. We notice that in this setting condition (C) reduces to \[
\left(\left(\frac{\omega_2(y)}{\omega_2(x)}\right)^{\frac 1p}\left(\frac{\omega_1(x)}{\omega_1(y)}\right)^{\frac1q}\right)^{\frac{n_\tau}{n_\tau-n}}\leq \frac{C_0}{n_\tau-n}\left(\frac1{p'} \frac{\nabla \omega_1(x)}{\omega_1(x)}+\frac{1}{p}\frac{\nabla \omega_2(x)}{\omega_2(x)}\right)\cdot y,\ \  \forall x,y\in E
\]
 whenever $n_\tau>n$, and to 
\[0\leq \left(\frac1{p'} \frac{\nabla \omega_1(x)}{\omega_1(x)}+\frac{1}{p}\frac{\nabla \omega_2(x)}{\omega_2(x)}\right)\cdot y,\ \  \forall x,y\in E
\]
in the limit case $n_\tau=n$. In fact, the above inequalities are precisely the key joint concavity conditions in \cite{BGK_PLMS}. This shows that Theorem \ref{th:CKN2weights} is a substantial extension of \cite[Theorem 1.3]{BGK_PLMS}. 

Let us note that the sharpness in the Sobolev inequality \eqref{Sobolev-particular} has been also analyzed in \cite[Theorem 1.3]{BGK_PLMS} stating that extremizers exist in \eqref{Sobolev-particular} if and only if $C_0=1$ and the weights $\omega_1$ and $\omega_2$ are equal up to a multiplicative factor. In the next section we prove a similar rigidity result in the  more general setting of the Gagliardo-Nirenberg inequalities as well. 
We also remark that the application of Proposition~\ref{prop:integrationbyparts} will fill a gap in \cite{BGK_PLMS} where we characterized the equality case in two-weighted Sobolev inequalities (see Remark~\ref{rem:PLMS}).


\subsubsection{Weighted Gagliardo-Nirenberg  inequalities with one weight} 

In the sequel, we show that our main result implies sharp Gagliardo-Nirenberg inequalities with one weight $\omega\colon E\to (0,+\infty)$, covering in particular the result of Lam \cite[Theorem 1.7]{Lam}. 
 
 We recall that a  function $\psi\colon E\to \mathbb R$ is \textit{$p$-concave} on $E$, $p\in \overline{\mathbb R}$, if 
 	\[
 	\psi((1-s)x+sy)\geq \mathcal M_s^p(\psi(x),\psi(y)),\ \ \forall x,y\in E,\ s\in [0,1],
 	\]
 	where $\mathcal M_s^p$ denotes the usual $p$-mean:
 	\[
 	\mathcal M_s^p(a,b)=\begin{cases}
 		((1-s)a^p+sb^p)^{\frac 1p}& \text{ if $ab\neq 0$}\\
 		0& \text{ if $ab=0$},
 	\end{cases}
 	\]
 	with the conventions $\mathcal M^{-\infty}_s(a,b)=\min\{a,b\}$; $\mathcal M_s^0(a,b)=a^{1-s}b^s$; and $\mathcal M_s^{+\infty}(a,b)=\max\{a,b\}$ if $ab\neq 0$ and $\mathcal M_s^{+\infty}(a,b)=0$ if $ab=0$.

 	\begin{remark} \label{p-concave-monotonicity}\rm
 		(i) The $p$-concavity of $\psi$ means that $\psi^p$ is concave on $E$ if $p>0$, $\psi^p$ is convex on $E$ if $p<0$, $\psi$ is log-concave on $E$ if $p=0$, and $\psi$ is constant on $E$ if $p=+\infty$.
 		
 		(ii) Due to the monotonicity property of the $p$-mean function, the $p_1$-concavity implies the $p_2$-concavity of a function $\omega\colon E\to \mathbb R$  whenever $p_1\geq p_2$.
 	\end{remark}

 For further reference, we denote by 
 $B(\cdot,\cdot)$  the Euler-Beta function and we shall use the notation $\omega_{SE}\coloneqq\displaystyle\int_{\mathbb S^{n-1} \cap E }\omega d\mathcal H^{n-1}$ for the integral of $\omega$ over the sphere $\mathbb S^{n-1}$ intersected by the cone $E$. 

\begin{theorem}\label{th:Lam} Let $E\subseteq \mathbb R^n$ be an open convex cone,  $\omega\colon E\to (0,+\infty)$ be a homogeneous weight of class $\mathcal C^1$ with degree $\tau\geq 0$, and  $1\neq  \gamma \geq 1-\frac{1}{n+\tau}$ be such that $\omega$ is  $\frac{1-\gamma}{1-n(1-\gamma)}$-concave on $E$. Let $1<p<n+\tau$ and $\alpha\coloneqq\frac 1{p(\gamma-1)+1}$. Then the following statements hold.
	\begin{itemize}
		\item[(i)] If $\gamma<1$, then   
		\begin{equation}\label{eq:Lam1}
			\left(\int_E |u|^{\alpha p}\omega dx\right)^{\frac{1}{\alpha p}}\leq \tilde {\sf C}_1 \left(\int_E|\nabla u|^p\omega dx\right)^{\frac{\theta_1}{p}}\left(\int_E|u|^{\alpha p \gamma}\omega dx\right)^{\frac{1-\theta_1}{\alpha p \gamma}},\ \ \forall u\in \dot W^{p,\alpha}(\omega, \omega; E),
		\end{equation} 
		 where  $\theta_1=\frac{(n+\tau)(1-\gamma)}{\alpha \gamma (p-\tau-n)+n+\tau}$ and 
		 $$\tilde {\sf C}_1=\left(\frac{\alpha-1}{p'}\right)^{\theta_1}
		 \frac{\left(\frac{p'}{n+\tau}\right)^{\frac{\theta_1}{p}+\frac{\theta_1}{n+\tau}}\left(\frac{\alpha (p-1)+1}{\alpha
		 		-1}-\frac{n+\tau}{p'}\right)^\frac{1}{\alpha p}
		 	\left(\frac{\alpha (p-1)+1}{\alpha
		 		-1}\right)^{\frac{\theta_1}{p}-\frac{1}{\alpha p}}}{\left(\displaystyle\int_{B\cap E} \omega(x)dx\, {B}\left(\frac{\alpha (p-1)+1}{\alpha
		 		-1}-\frac{n+\tau}{p'},\frac{n+\tau}{p'}\right)\right)^{\frac{\theta_1}{n+\tau}}}.$$
		Moreover, equality holds in \eqref{eq:Lam1} if and only if $u$ is of the form
		\[
		w_\lambda(x)=A(\lambda+|x+x_0|^{p'})^\frac{1}{1-\alpha}, x\in E,
		\]
		where $A,\lambda >0$ and $x_0\in -\overline E\cap \overline E$ with $\omega(x+x_0)=\omega(x)$, $x\in E$.
		\item[(ii)] 
		If $\gamma>1$,  then   
		\begin{equation}\label{eq:Lam2}
			\left(\int_E |u|^{\alpha p\gamma}\omega dx\right)^{\frac{1}{\alpha p\gamma}}\leq \tilde  {\sf C}_2 \left(\int_E|\nabla u|^p\omega dx\right)^{\frac{\theta_2}{p}}\left(\int_E|u|^{\alpha p }\omega dx\right)^{\frac{1-\theta_2}{\alpha p }},\ \ \forall u\in \dot W^{p,\alpha}(\omega, \omega; E),
		\end{equation} 
		 where  $\theta_2=\frac{(n+\tau)(\gamma-1)}{\alpha \gamma (p-\tau-n)+\gamma(n+\tau)}$ and
			\begin{eqnarray*}
			\tilde {\sf C}_2&=& \left(\frac{1-\alpha}{p'}\right)^{\theta_2}
				\frac{\left(\frac{p'}{n+\tau}\right)^{\frac{\theta_2}{p}+\frac{\theta_2}{n+\tau}}\left(\frac{\alpha (p-1)+1}{1-\alpha
					}+\frac{n+\tau}{p'}\right)^{\frac{\theta_2}{ p}-\frac{1}{\alpha(p-1)+1}}
					\left(\frac{\alpha (p-1)+1}{1-\alpha
					}\right)^{\frac{1}{\alpha(p-1)+1}}}{\left(\displaystyle\int_{B\cap E} \omega(x)dx\, {B}\left(\frac{\alpha (p-1)+1}{1-\alpha
					},\frac{n+\tau}{p'}\right)\right)^{\frac{\theta_2}{n+\tau}}}.
		\end{eqnarray*}
	Moreover, equality holds in \eqref{eq:Lam2} for the family of functions \[
		w_\lambda(x)=A(\lambda-(1-\alpha)|x+x_0|^{p'})_+^\frac{1}{1-\alpha},\ x\in E,\ \lambda>0.
		\]
		where $A,\lambda >0$ and $x_0\in -\overline E\cap \overline E$ with $\omega(x+x_0)=\omega(x)$, $x\in E$.
	\end{itemize}
\end{theorem}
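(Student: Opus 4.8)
The plan is to deduce Theorem \ref{th:Lam} from the two main results, Theorem \ref{th:CKN2weights} (case $\gamma<1$) and Theorem \ref{th:CKN2weights-2} (case $\gamma>1$), by specializing to $\omega_1=\omega_2=\omega_3=:\omega$ and then making the resulting constant explicit. First I would invoke the equivalence established in Section~\ref{sec:Preliminar-0}: for equal weights the $\frac{1-\gamma}{1-n(1-\gamma)}$-concavity of $\omega$ is exactly condition \textbf{(C)} with $K=-(n+\tau)$ and $C_0=1$, that is, \eqref{Lam-equiv}. With these choices the structural constants in \eqref{L-M} become $L=(n+\tau)(1-\gamma)$ and $M=p\bigl(1-(n+\tau)(1-\gamma)\bigr)$. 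The hypothesis $1-\frac{1}{n+\tau}\le\gamma$ forces $0<(n+\tau)(1-\gamma)\le 1$ when $\gamma<1$, so $L>0$, $M\ge 0$ and $\frac{1}{1-\gamma}+K=\frac{1}{1-\gamma}-(n+\tau)\ge 0$, which are precisely the hypotheses of Theorem~\ref{th:CKN2weights}; when $\gamma>1$ one gets $L<0$, $M>0$ and $1+K(1-\gamma)=1+(n+\tau)(\gamma-1)>0$, matching Theorem~\ref{th:CKN2weights-2}. After checking that the remaining structural constraints on $\gamma,p$ are met, substituting $\tau_1=\tau_2=\tau_3=\tau$ into the expressions for $\theta_1$ and $\theta_2$ reproduces the exponents claimed in the statement.

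With the inequality in hand, the heart of the proof is to compute the infimum $C_{\mathcal G_{\omega,\omega}}$ from \eqref{C-1-2-constant-initial} (resp.\ $\tilde C_{\mathcal G_{\omega,\omega}}$ from \eqref{C-1-2-constant-initial-dual}), which for equal weights reduces to minimizing $\bigl(\int_E G|y|^{p'}\omega\bigr)^{L/p'}\big/\bigl(\int_E G^\gamma\omega\bigr)^{M/p+L}$ subject to $\int_E G\omega=1$. I would show the infimum is attained at the radial profile $G_0(y)=c\,(\lambda+|y|^{p'})^{1/(\gamma-1)}$ for $\gamma<1$ and $G_0(y)=c\,(\lambda-(1-\alpha)|y|^{p'})_+^{1/(\gamma-1)}$ for $\gamma>1$. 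The exponent $1/(\gamma-1)$ is forced by the identity $\frac{\alpha p}{1-\alpha}=\frac{1}{\gamma-1}$, so that $G_0$ is, up to normalization, exactly $w_\lambda^{\alpha p}$; this is the density for which the Brenier map in \eqref{Monge-Ampere} degenerates to a homothety and all intermediate inequalities become equalities. The most economical route is to evaluate the minimizing quotient at $G_0$ to obtain a candidate constant and then verify directly that $u=w_\lambda$ turns every estimate in the proof of the main theorems into an equality, thereby simultaneously computing $C_{\mathcal G_{\omega,\omega}}$ and establishing sharpness.

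The remaining computational step is to evaluate $\int_E G_0\,\omega$, $\int_E G_0|y|^{p'}\omega$ and $\int_E G_0^\gamma\,\omega$ on this profile. Passing to polar coordinates and using the homogeneity $\omega(ry')=r^\tau\omega(y')$ factors out $\omega_{SE}=\int_{\mathbb S^{n-1}\cap E}\omega\,d\mathcal H^{n-1}$ and leaves one-dimensional integrals of the type $\int_0^\infty(\lambda+r^{p'})^s r^{b-1}\,dr$; the substitution $t=r^{p'}$ turns these into Euler--Beta functions $B\!\left(\frac{b}{p'},-s-\frac{b}{p'}\right)$. In particular the factor $B\!\left(\frac{\alpha(p-1)+1}{\alpha-1}-\frac{n+\tau}{p'},\frac{n+\tau}{p'}\right)$ arises from $\int_E G_0^\gamma\omega$, since $\frac{\alpha(p-1)+1}{\alpha-1}=\frac{\gamma}{1-\gamma}$ (using $\alpha(p-1)+1=\alpha p\gamma$), and analogously for $\gamma>1$. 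Plugging these closed forms into ${\sf C}_1=(C_{K,L,M,C_0}\,C_{\mathcal G_{\omega,\omega}})^{1/(\alpha\gamma M+L)}$ and simplifying yields $\tilde{\sf C}_1$, and likewise $\tilde{\sf C}_2$; this bookkeeping of exponents is routine but lengthy.

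Finally, for the equality case in the direction ``extremal $\Rightarrow$ form of $w_\lambda$'' I would trace the chain of inequalities inside the proofs. Equality forces simultaneously: $G=G_0$ in the infimum; equality in Lemma~\ref{tr-det-inequality}, hence $D_A^2\phi=C\,I_n$ a.e., so $\nabla\phi$ is a homothety composed with a translation; equality in Young's inequality \eqref{eq:CauchySchwartz}, which pins the proportionality between $\nabla f$ and $f^{\alpha p\gamma-1}\nabla\phi$; and pointwise equality in condition \textbf{(C)}. Via \eqref{Monge-Ampere} these identities force $u$ to coincide, up to the scaling $u\mapsto u_\lambda$, with $w_\lambda$, while the translation $x_0$ is constrained to lie in $-\overline E\cap\overline E$ and to satisfy $\omega(\cdot+x_0)=\omega(\cdot)$ so that both the cone $E$ and the weight $\omega$ are preserved. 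I expect the two main obstacles to be: proving that the explicit profile $G_0$ genuinely \emph{attains} the infimum defining $C_{\mathcal G_{\omega,\omega}}$ rather than being merely a critical point, and, in the rigidity part, correctly isolating the admissible translations $x_0$, as this is exactly where the interplay between the affinity of $\nabla\phi$, the invariance of $E$, and the homogeneity of $\omega$ must be handled with care.
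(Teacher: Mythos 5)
Your reduction to Theorems \ref{th:CKN2weights} and \ref{th:CKN2weights-2} with $\omega_1=\omega_2=\omega_3=\omega$, $K=-(n+\tau)$, $C_0=1$ (via Proposition \ref{Lam-equivalence}), followed by the Beta-function evaluation of the explicit profiles $G_0$ and $w_\lambda$, is exactly the paper's route, and your exponent identities ($\tfrac{\alpha p}{1-\alpha}=\tfrac1{\gamma-1}$, $\alpha(p-1)+1=\alpha p\gamma$) are correct. One clarification: the ``obstacle'' you flag about proving that $G_0$ genuinely attains the infimum defining $C_{\mathcal G_{\omega,\omega}}$ is not actually an obstacle in the two-sided scheme you yourself propose. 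Plugging $G_0$ into \eqref{C-1-2-constant-initial} gives ${\sf C}_1\leq \tilde{\sf C}_1$ with no attainment needed, and evaluating the Gagliardo--Nirenberg quotient at $u_0=w_1$ gives ${\sf C}_1\geq\tilde{\sf C}_1$, since the inequality with constant ${\sf C}_1$ holds for all admissible $u$; the sandwich yields ${\sf C}_1=\tilde{\sf C}_1$, and attainment of the infimum at $G_0$ follows a posteriori. This is precisely the paper's ``duality'' argument.

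The genuine gap is in your rigidity argument (``extremal $\Rightarrow w_\lambda$''). You pass from equality in Lemma \ref{tr-det-inequality}, i.e.\ $D_A^2\phi=c\,I_n$ a.e., directly to ``$\nabla\phi$ is a homothety composed with a translation''. This step fails for a general convex $\phi$: that identity concerns only the \emph{absolutely continuous} (Alexandrov) part of the Hessian, and a convex function can satisfy $D^2_A\phi=c\,I_n$ a.e.\ while its distributional Hessian carries a nontrivial singular part (already in one dimension, $\phi(x)=\tfrac c2 x^2+|x-a|$), in which case $\nabla\phi$ is not affine. Before invoking the Schwarz lemma one must prove that the singular part $\Delta_s\phi$ vanishes on $\Omega=\{\phi<+\infty\}$. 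In the paper this is a substantial separate step: equality in Young's inequality \eqref{eq:CauchySchwartz} is used not merely to ``pin proportionality'' but to show that $u^{1-\alpha}$ is locally Lipschitz, hence that $u$ is bounded below by positive constants on compact subsets of $\Omega$; this positivity, fed into the equality case of the integration-by-parts estimate (Proposition \ref{prop:integrationbyparts}, traced through \eqref{eq:stimaRHS} and \eqref{eq:stimaRHS-2}), forces $\lim_h\langle u_h^{\alpha p\gamma}\omega,\Delta_s\phi\rangle_{\mathcal D'}=0$ and hence $\Delta_s\phi[\mathcal K]=0$ for every compact $\mathcal K\subset\Omega$. Only then does $D^2_{\mathcal D'}\phi=D^2_A\phi$ hold, so that the Schwarz lemma gives $\nabla\phi(x)=cx+x_0$, after which the Monge--Amp\`ere equation, the no-jump-discontinuity argument forcing $\Omega=E$, and the homogeneity argument yielding $\omega(\cdot+x_0)=\omega(\cdot)$ proceed as you outline. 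Without this regularity step the characterization of extremals in part (i) is unproven.
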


\begin{proof} (i) In Theorem \ref{th:CKN2weights}  we are going to  choose 
	$\omega_1=\omega_2=\omega_3=\omega$, $\tau_1=\tau_2=\tau_3=\tau\geq 0$,  $K=-n-\tau$ and $C_0=1.$ In the sequel we may assume that $\tau>0$; otherwise, the weights should be constants, and the results reduce to \cite{CENV}.  We also have that $\gamma \geq 1-\frac{1}{n+\tau}\geq \max\{ 1-\frac{1}{n},\frac1{p'}\}.$  With these choices, it turns out by Proposition \ref{Lam-equivalence} (see in particular inequality \eqref{Lam-3}) that the  $\frac{1-\gamma}{1-n(1-\gamma)}$-concavity of $\omega$ on $E$ implies \eqref{eq:conditionC}. In particular, by Theorem \ref{th:CKN2weights} one has the validity of the Gagliardo-Nirenberg inequality \eqref{eq:CKN2weights}, with the constant ${\sf C}_1>0$ from \eqref{C-1-optimal}. Since $M=p(1+(\gamma-1)(n+\tau))$ and $L=(1-\gamma)(n+\tau)$, a simple computation 
	shows that the constant from \eqref{C-1-1-constant-initial}
	becomes 
	$$C_{K,L,M,C_0}=\left(\frac{p\gamma\alpha}{n+\tau}\right)^L.$$
	Consequently, the constant ${\sf C}_1>0$ from \eqref{C-1-optimal} is
	\begin{equation}\label{C-representation}
		{\sf C}_1=\left(\frac{p\gamma\alpha}{n+\tau}\right)^{\theta_1}\inf_{G\in \mathcal G_{\omega,\omega};\int_E G\omega=1}\frac{\displaystyle\left(\int_E G(y)|y|^{p'}\omega(y)dy\right)^\frac{\theta_1}{p'}}{\displaystyle\left(\int_E G^\gamma(y)\omega(y)dy\right)^{\frac{\theta_1}{L}}}.
	\end{equation}
	
	We claim that ${\sf C}_1=\tilde {\sf C}_1$ whose proof is based on a ``duality'' principle. To see this, we recall that  if $q>\frac{s+n+\tau}{p'}>0$, then for every $\lambda,c>0$ we have	\begin{equation}\label{Beta-function-0}
		\int_{E}(\lambda+c|y|^{p'})^{-q}|y|^s\omega(y)dy=\lambda^{-q}\left(\frac{\lambda}{c}\right)^\frac{s+n+\tau}{p'}\frac{1}{p'}{\sc B}\left(q-\frac{s+n+\tau}{p'},\frac{s+n+\tau}{p'}\right)\omega_{SE}.
	\end{equation}
Let us choose $$
G_0(x)=(1 +|x|^{p'})^{\frac{\alpha p}{1-\alpha}}, \ x\in E.
$$
According to \eqref{Beta-function-0}, we have that 
$$I_1^0=\int_E G_0(x)\omega(x)dx=\frac{1}{p'}{\sc B}\left(\frac{\alpha p}{\alpha-1}-\frac{n+\tau}{p'},\frac{n+\tau}{p'}\right)\omega_{SE},$$
$$I_2^0=\int_E G_0(x)|x|^{p'}\omega(x)dx=\frac{1}{p'}{\sc B}\left(\frac{\alpha p}{\alpha-1}-\frac{p'+n+\tau}{p'},\frac{p'+n+\tau}{p'}\right)\omega_{SE},$$
$$I_3^0=\int_E G_0^\gamma(x)\omega(x)dx=\frac{1}{p'}{\sc B}\left(\frac{\alpha\gamma p}{\alpha-1}-\frac{n+\tau}{p'},\frac{n+\tau}{p'}\right)\omega_{SE},$$
and these expressions are well-defined due to the facts that 
$n>-\tau$ and $\frac{\alpha p}{\alpha-1}-\frac{n+\tau}{p'}>\frac{\alpha\gamma p}{\alpha-1}-\frac{n+\tau}{p'}=\frac{\alpha p}{\alpha-1}-\frac{p'+n+\tau}{p'}>0.$ Consequently, $G_0\in \mathcal G_{\omega,\omega}$. Therefore, by \eqref{C-representation} and an elementary computation  we have that
\begin{eqnarray*}
	{\sf C}_1&\leq& \left(\frac{p\gamma\alpha}{n+\tau}\right)^{\theta_1}\frac{\displaystyle\left(\int_E G_0(y)|y|^{p'}\omega(y)dy\right)^\frac{\theta_1}{p'}}{\displaystyle\left(\int_E G_0^\gamma(y)\omega(y)dy\right)^{\frac{\theta_1}{L}}}\left(\int_E G_0(y)\omega(y)dy\right)^{\frac{\gamma \theta_1}{L}-\frac{\theta_1}{p'}}\\&=&\left(\frac{p\gamma\alpha}{n+\tau}\right)^{\theta_1}\frac{(I_2^0)^\frac{\theta_1}{p'}}{(I_3^0)^{\frac{\theta_1}{L}}}\left(I_1^0\right)^{\frac{\gamma \theta_1}{L}-\frac{\theta_1}{p'}}\\&=&\tilde {\sf C}_1,
\end{eqnarray*}
where we used the recurrence relation $B(a+1,b)=\frac{a}{a+b}B(a,b)$ for every $a,b>0$, the identity $\frac{\gamma \theta_1}{L}-\frac{\theta_1}{p'}=\frac{1}{\alpha p}$ and relation	${\omega_{SE}}=({n+\tau})\displaystyle\int_{B\cap E}\omega.$
 
 On the other hand,   the Gagliardo-Nirenberg inequality \eqref{eq:CKN2weights} holds, i.e., 
 \begin{equation}\label{sup-GN}
 	\sup_{u\in \dot{W}^{p,\alpha}(\omega,\omega;E)\setminus \{0\}}\frac{\left(\displaystyle\int_E |u|^{\alpha p}\omega dx\right)^{\frac{1}{\alpha p}}}  { \left(\displaystyle\int_E|\nabla u|^p\omega dx\right)^{\frac{\theta_1}{p}}\left(\displaystyle\int_E|u|^{\alpha p \gamma}\omega dx\right)^{\frac{1-\theta_1}{\alpha p \gamma}}}\leq {\sf C}_1.
 \end{equation}
Let $$u_0(x)=(1 +|x|^{p'})^{\frac{1}{1-\alpha}},\ x\in E.$$ 
Since $\nabla u_0(x)=\frac{p'}{1-\alpha}(1+|x|^{p'})^\frac{\alpha}{1-\alpha}|x|^{p'-2}x$, it turns out by \eqref{Beta-function-0} that
\[
\int_E|\nabla u_0|^p\omega dx=\left(\frac{p'}{\alpha-1}\right)^pI_2^0,
\]
while 
\[
\int_E  u_0 ^{\alpha p}\omega dx=I_1^0,\ \  {\rm and}\ \ \int_E u_0 ^{\alpha p \gamma}\omega dx=I_3^0.
\]
In particular, $u_0\in \dot{W}^{p,\alpha}(\omega,\omega;E).$ Therefore, by \eqref{sup-GN} and the latter relations, a similar computation as above shows that 
\[{\sf C}_1\geq \frac{\left(\displaystyle\int_E u_0^{\alpha p}\omega dx\right)^{\frac{1}{\alpha p}}}  { \left(\displaystyle\int_E|\nabla u_0|^p\omega dx\right)^{\frac{\theta_1}{p}}\left(\displaystyle\int_Eu_0^{\alpha p \gamma}\omega dx\right)^{\frac{1-\theta_1}{\alpha p \gamma}}}=\left(\frac{\alpha-1}{p'}\right)^{\theta_1}\frac{\left(I_1^0\right)^{\frac{1}{\alpha p}}}  { \left(I_2^0\right)^{\frac{\theta_1}{p}}\left(I_3^0\right)^{\frac{1-\theta_1}{\alpha p \gamma}}}=\tilde {\sf C}_1.
\]
Thus, the proof of  ${\sf C}_1=\tilde {\sf C}_1$ is concluded, which shows also the validity of \eqref{eq:Lam1}. We also notice that the value from \eqref{mu-choice} becomes $\mu=(p')^\frac{1}{p'}$, which is consistent with Lam \cite{Lam}. 

Let us focus to the equality case. First, as in the proof of the claim  ${\sf C}_1=\tilde {\sf C}_1$, a change of variables together with the property $\omega (x+x_0)=\omega(x)$ for every $x\in E$ and a direct computation easily shows that $w_\lambda(x)=(\lambda +|x+x_0|^{p'})^{\frac{1}{1-\alpha}}$ provides a class of extremizers in \eqref{eq:Lam1}.  

Conversely, we assume that a function 
 $u\colon E\to \mathbb R$ satisfies the equality in \eqref{eq:Lam1}; without loss of generality, we may assume that $u\geq 0.$  We first show that the transport map $\nabla \phi$ is more regular, namely that the singular part of the distributional Laplacian $\Delta_s\phi$ vanishes, where $\phi$ is the convex function on $E$ by the proof of Theorem \ref{th:CKN2weights}. To do so, we follow some ideas from \cite{CENV}, also iterated in \cite{BDK}. 
 
  The main idea of the proof that will follow is to trace back the proof of Theorem~\ref{th:CKN2weights} analysing the cases of equality in each of the inequalities that show up in the proof.  In doing so we are making the particular choice
		\[
		G(x)=(\lambda +|x|^{p'})^{\frac{\alpha p}{1-\alpha}}, x\in E,
		\]
		for an appropriate $\lambda$ such that $\displaystyle\int G\omega=1$. 
		By doing this analysis we will be able to show that $\nabla \phi$ will be reduced to the identity mapping up to a translation and we obtain that $u$ is equal to $G$ up to a translation. 
	
		To carry out the above program we start by considering the set $\Omega=\{x\in E:\phi(x)<+\infty \}$.  It is clear that  $\Omega$  contains the support of $u$ and, since $\phi$ is convex, $\Omega$ is a convex set. In particular, the equality in the Young inequality \eqref{eq:CauchySchwartz} gives 
		\begin{equation}\label{eq:sharpYoung1}
			Mu^{(\alpha p\gamma -1)p'}|\nabla \phi|^{p'}=|\nabla u|^p, \quad \text{a.e.\ on $\Omega$},
		\end{equation}
		for some $M>0$. 
		
		As a preliminary fact, we show that $u$ is continuous and strictly positive on its support and that ${\rm supp}(u)=\Omega$. 
		To do so, we define for any $k\in \mathbb N$ the truncation function  $u_k\coloneqq \max\{\frac 1k, u\}$. Using  \eqref{eq:sharpYoung1}  and recalling that $(\alpha p \gamma -1)p'=\alpha p$, we have
		\begin{equation}\label{eq:sharpYoungineq}
			Mu_k^{\alpha p} |\nabla\phi|^{p'}\geq Mu^{\alpha p} |\nabla\phi|^{p'}=|\nabla u|^p\geq |\nabla u_k|^p, \quad \text{a.e.\ on $\Omega$.}
		\end{equation} 
		Dividing both sides of \eqref{eq:optimalYoungineq} by $u_k^{\alpha p}$ (which is always nonzero) we obtain
		\begin{equation}\label{eq:inequ1-alpha}
			|\nabla (u_k^{1-\alpha})|^p\leq(\alpha-1)^pM |\nabla \phi|^{p'}, \quad \text{a.e. on $\Omega$}.
		\end{equation}
		Fix a compact set $\mathcal K\subset \Omega$ with nonempty interior such that there exists $x_0\in \mathcal K$ with $u(x_0)>0$. Since $\phi$ is convex, we have that $|\nabla \phi|$ is bounded on $\mathcal K$. By \eqref{eq:inequ1-alpha} also $|\nabla (u_k^{1-\alpha})|$ is bounded on $\mathcal K$. Since by definition $u_k$ are uniformly converging to $u$ on $\mathcal K$ and $u_k(x_0)=u(x_0)$ for large enough $k$. Then also $u_k^{1-\alpha}$ are uniformly converging to $u^{1-\alpha}$ on $\mathcal K$ and the map $u^{1-\alpha}$ is Lipschitz on $\mathcal K$. 
		In particular, if $\mathcal K\subset  \Omega$ is a compact set, there exists $c_{\mathcal K}>0$ such that
		\[
		u^{1-\alpha}(x)\leq c_{\mathcal K}, \quad \forall x\in \mathcal K,
		\]
		Since $\gamma<1$ implies that $\alpha>1$, and so we have
		\[
		u(x)\geq c_{\mathcal K}^{\frac{1}{1-\alpha}}, \quad \forall x\in \mathcal K.
		\]
		By the proof of Proposition~\ref{prop:integrationbyparts} (precisely \eqref{eq:GaussGreen}) we can find a sequence $(u_h)_{h\in \mathbb N}$ in $C_0^\infty(\mathbb R^n)$ converging to $u$ uniformly on $\mathcal K$ and in particular for every sufficiently large $h$ one has $u_h\geq c_{\mathcal K}^{\frac 1{1-\alpha}}/2$. Then the equality in \eqref{eq:stimaRHS-2} is achieved in the limit, and it  gives
		\[
		0=\lim_h\langle u_h^{\alpha p \gamma}\omega, \Delta_s\phi\rangle_{\mathcal D'}\geq (\min_{\mathcal K}\omega) \left(\frac{c_{\mathcal K}^{\frac 1{1-\alpha}}}{2}\right)^{\alpha p \gamma} \Delta_s\phi[\mathcal K].
		\]
		By taking an invading sequence of compact sets, we get that $\Delta_s\phi$ vanishes on $\Omega$. For a similar argument, with more details we refer the reader to \cite[Subsection~3.2]{BDK}.
		
		We can now consider the equality case in \eqref{eq:from21toC}. Using the information on the equality case in  Lemma~\ref{tr-det-inequality} and the fact that $\Delta_s\phi=0$ we have
		\[
		D^2_{\mathcal D'}\phi(x)=\left(\frac{\omega(\nabla\phi(x))}{\omega(x)}\right)^{\frac{1-\gamma}{1-n(1-\gamma)}}I_n, \quad \text{in distributional sense on $\Omega$}.
		\]
		By the Schwarz Lemma,  the only possibility is that $(\omega(\nabla\phi(x))/\omega(x))^{\frac{1-\gamma}{1-n(1-\gamma)}}$ is constant on $\Omega$. This means that we can find $c\in \mathbb R$, $x_0\in \mathbb R^n$ such that
		\begin{equation}\label{eq:considerations}
			\begin{aligned}
				&D^2_{\mathcal D'}\phi(x)=cI_n, && \text{in the sense of distribution on $\Omega$}\\
				& \nabla\phi(x)= cx+x_0, && \text{ on $\Omega$}\\
				&\omega(cx+x_0)=c^{\frac{1-n(1-\gamma)}{1-\gamma}}\omega(x),&& x\in \Omega.
			\end{aligned}
		\end{equation}
		Note that since $\phi$ is convex we have  $c>0$. Also, since $\nabla\phi$ is essentially bijective onto the support of $G$; and the support of $G$ is $E$, we have $c\Omega+x_0=E$. We aim at proving that  $E=\Omega$. Clearly, $\Omega \subseteq E$. By the Monge-Amp\`ere equation \eqref{Monge-Ampere} we can write
		\begin{equation}\label{eq:mongeamperesharp}
			u^{\alpha p}(x)\omega(x)=(\lambda+|cx+x_0|^{p'})^\frac{\alpha p}{1-\alpha}\omega(cx+x_0)c^n, \quad x\in \Omega.
		\end{equation}
		In particular,
		\begin{equation}\label{eq:finalfunction}
			u(x)=\begin{cases}A(\lambda+|cx+x_0|^{p'})^\frac{1}{1-\alpha}, &\text{if $x\in \Omega$},\\
				0& \text{otherwise}.
			\end{cases}
		\end{equation}
		for some constant $A>0$. If by contradiction $\Omega\neq E$, then $u$ would have jump discontinuities  on the boundary of a translated cone, which is a set of Hausdorff dimension $n-1$, in contradiction with the fact that $u$ is in the Sobolev space, see e.g.\ \cite[Theorems~4.17 \& 4.19]{EvansGariepy}. Since $\nabla\phi$ is essentially bijective, we have $cE+x_0=E$. In particular $-x_0\in \overline E$ and hence $x_0\in \overline E\cap -\overline E$.
		
		 We conclude the proof if we show that $\omega (x+x_0)=\omega(x)$ for every $x\in E$. Consider the last identity in \eqref{eq:considerations}. By evaluating it in $tx$ and exploiting the homogeneity of $\omega$ we can write
		\[
		\omega\left(cx+\frac{x_0}t\right)=c^{\frac{1-n(1-\gamma)}{1-\gamma}}\omega (x), \quad x\in E, t>0.
		\]
		Letting $t\to \infty$ in the previous equation we get $\omega (cx)=c^{\frac{1-n(1-\gamma)}{1-\gamma}}\omega(x)$ and hence
		$
		c^\tau=c^{\frac{1-n(1-\gamma)}{1-\gamma}}.
		$
		If $\tau\neq \frac{1-n(1-\gamma)}{1-\gamma}$ then necessarily $c=1$ and from \eqref{eq:considerations} we get $\omega(x+x_0)=\omega(x)$ for every $x\in E$, as required.
		If $\tau=\frac{1-n(1-\gamma)}{1-\gamma}$ then again by \eqref{eq:considerations} we can write
		\[
		\omega(cx+x_0)=c^{\frac{1-n(1-\gamma)}{1-\gamma}}\omega(x)=\omega(cx), \quad x\in E.
		\]
		Letting $x:=x/c$, $x\in E$, into the latter relation, we obtain   $\omega(x+x_0)=\omega(x)$  for every $x\in E$.
		

(ii) Note that $M=p(1+(\gamma-1)(n+\tau))$ and $L=(1-\gamma)(n+\tau)$, thus by 
\eqref{C-1-1-constant-initial-dual}
we have $$\tilde C_{K,L,M,C_0}=\left(\frac{p\gamma\alpha}{n+\tau}\right)^{-L}.$$
Accordingly, the constant ${\sf C}_2>0$ from \eqref{C-2-optimal} becomes
\begin{equation}\label{C-representation-2}
	{\sf C}_2=\left(\frac{p\gamma\alpha}{n+\tau}\right)^{\theta_2}\inf_{G\in \mathcal G_{\omega,\omega};\int_E G\omega=1}\frac{\displaystyle\left(\int_E G(y)|y|^{p'}\omega(y)dy\right)^\frac{\theta_2}{p'}}{\displaystyle\left(\int_E G^\gamma(y)\omega(y)dy\right)^{\frac{\theta_2}{L}}}.
\end{equation}
In order to prove that ${\sf C}_2=\tilde {\sf C}_2$ we proceed as in (i) by using  for
every $\lambda>0$, $q>-1$ and $s+n+\tau> 0$ the formula  
\begin{equation}\label{Beta-function}
	\int_{B_E(\lambda,\alpha)}(\lambda-(1-\alpha)|y|^{p'})_+^q|y|^s\omega(y)dy=\lambda^q\left(\frac{\lambda}{1-\alpha}\right)^\frac{s+n+\tau}{p'}\frac{1}{p'}{\sc B}\left(q+1,\frac{s+n+\tau}{p'}\right)\omega_{SE},
\end{equation}
where $B_E(\lambda,\alpha)=\left\{x\in E:|x|<\left(\frac{\lambda}{1-\alpha}\right)^\frac{1}{p'}\right\}$. Indeed, for the inequality 
${\sf C}_2\leq \tilde {\sf C}_2$ we use in \eqref{C-representation-2}  the function 
$G_0(x)=(1-|x|^{p'})_+^\frac{\alpha p}{1-\alpha}$, $x\in E,$  while for the converse, we consider in \eqref{eq:Lam2} the function $u_0(x)=(1-|x|^{p'})_+^\frac{1}{1-\alpha}$, $x\in E$. The rest of the proof is based on a computation; in particular, it turns out that for the family of functions $ w_\lambda(x)=A(\lambda-(1-\alpha)|x+x_0|^{p'})_+^\frac{1}{1-\alpha}$, $x\in E$,  one has equality in \eqref{eq:Lam2} for $\lambda>0$; here, 
 $x_0\in -\overline E\cap \overline E$ is such that $\omega(x+x_0)=\omega(x)$, $x\in E$.
\end{proof}

\begin{remark}\rm 	
	 We notice that Lam \cite{Lam} used  condition \eqref{Lam-1-0} in order to prove the statements in Theorem \ref{th:Lam} whenever $\gamma\neq 1$, which turns to be equivalent to  the $\frac{1-\gamma}{1-n(1-\gamma)}$-concavity of $\omega$ on $E$, see Proposition \ref{Lam-equivalence}.  Moreover,  Proposition \ref{Lam-equivalence-log} shows that in Theorem \ref{th:Lam} (i) (i.e.,  $\gamma<1$),   we can replace the assumption that $\omega$ is  $\frac{1-\gamma}{1-n(1-\gamma)}$-concave on $E$  by requiring that $\omega $ is log-concave.
	The limit case $\gamma\to 1$ is discussed in the next subsection. 
	
\end{remark}

\subsection{Sharp weighted $p$-log-Sobolev inequality}\label{subsection-log-Soboev}

In this section, we state a sharp one-weighted log-Sobolev inequality by taking the limit $\gamma\to 1$ in Theorem \ref{th:Lam}.

Before doing this, we show that a limiting argument in condition (C) with $\gamma\to 1$ provides itself a rigid situation for the weights, i.e., they are equal to each other (up to a multiplicative constant) and log-concave.  This observation explains why weighted log-Sobolev inequalities are reasonably expected to be valid with the \textit{same} weights.

  Multiplying the inequality \eqref{eq:conditionC} by $(1-\gamma)$ assuming first that $\gamma<1$, and then that $\gamma>1$, the limiting $\gamma\to 1$ reduces  condition (C)  to 
\[
\omega_2^{\frac 1p}(y)\omega_1^{\frac1{p'}-1}(y)=\frac{\omega_3(x)}{\omega_1(x)},\ \ \ x,y\in E.
\]
Therefore, there exists $C>0$ such that
$\omega_3(x)=C\omega_1(x)$ and $\omega_2(x)=C^p\omega_1(x)$ for every $x\in E$. Inserting this relations into \eqref{eq:conditionC} and letting again $\gamma\to 1$, we obtain that 
\begin{equation}\label{ineq-log-2}
	\log\frac{\omega_1(y)}{\omega_1(x)}\leq K+n+C_0\frac{\nabla \omega_1(x)}{\omega_1(x)}\cdot y,\ \ \ x,y\in E.
\end{equation}
Let $y=\lambda x$ for $\lambda>0$ in the latter inequality; then $\tau\log \lambda \leq K+n+C_0\tau \lambda,$ $\lambda>0$. Optimizing the latter inequality in $\lambda>0$, it turns out that the best choice is $K=-\tau \log C_0-\tau-n$. Plugging this value into \eqref{ineq-log-2}, and reorganizing the terms it follows (according to Proposition \ref{Lam-equivalence-log}) that $\omega_1$ is log-concave on $E.$

 We may assume that $\tau>0$; otherwise, if $\tau=0$, the log-concavity of $\omega$ on $E$ is equivalent to the fact that $\omega$ is constant on $E$, which reduces to the well-known results, see e.g. \cite{delPinoDolbeault-2} and \cite{Gentil}. For $s+n+\tau>0$, by using 
 	the formula 
 \begin{equation}\label{gaussian-computation}
 	\int_Ee^{-\lambda{|x|^{p'}}}|x|^s \omega(x)dx=\frac{1}{p'}\omega_{SE}\lambda^{-\frac{n+\tau+s}{p'}}\Gamma\left(\frac{s+n+\tau}{p'}\right), 
 \end{equation}
 and further simple asymptotic properties of the Beta-function, a standard limiting argument in Theorem \ref{th:Lam} provides the following log-Sobolev inequality: 

\begin{theorem}\label{log-Sobolev}
	Let $E\subseteq \mathbb R^n$ be an open convex cone and $\omega\colon E\to (0,+\infty)$ be a  log-concave homogeneous weight of class $\mathcal C^1$ with degree $\tau\geq 0,$ and $p\in (1,n+\tau).$ Then for every function $u\in W^{1,p}(\omega;E)$ with $\displaystyle\int_E |u|^p\omega dx=1$ we have 
	\begin{equation}\label{sharp-log-Sobolev}
	\int_E |u|^p\log |u|^p \omega dx\leq \frac{n+\tau}{p}	\log\left(\mathcal L_{\omega,p}\displaystyle\int_E |\nabla u|^p\omega dx\right),
	\end{equation}
	where 
	$$\mathcal L_{\omega,p}=\frac{p}{n+\tau}\left(\frac{p-1}{e}\right)^{p-1}\left(\Gamma\left(\frac{n+\tau}{p'}+1\right)\int_{B\cap E}\omega\right)^{-\frac{p}{n+\tau}}.$$
	Equality holds in \eqref{sharp-log-Sobolev} if the extremal function belongs to the family of Gaussians
	\begin{equation}\label{Gaussian}
	u_{\lambda,x_0}(x)=\lambda^\frac{n+\tau}{pp'}\left(\Gamma\left(\frac{n+\tau}{p'}+1\right)\int_{B\cap E}\omega\right)^{-\frac{1}{p}}e^{-\lambda\frac{|x+x_0|^{p'}}{p}},\ x\in E, \ \lambda>0,
	\end{equation}
	with $x_0\in -\overline E\cap \overline E$ and $\omega(x+x_0)=\omega(x)$, $x\in E$.
\end{theorem}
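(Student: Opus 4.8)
The plan is to derive \eqref{sharp-log-Sobolev} as the first-order (in the parameter $\gamma$) content of the sharp one-weighted Gagliardo--Nirenberg inequality \eqref{eq:Lam1} in the limit $\gamma\to1^-$. First I would reduce to a regular function: fix $u\in W^{1,p}(\omega;E)$ with $u\geq 0$ and $\int_E u^p\omega\,dx=1$, assume that the entropy $\int_E u^p\log u^p\,\omega\,dx$ and the energy $\int_E|\nabla u|^p\omega\,dx$ are finite (otherwise there is nothing to prove), and, after an approximation/truncation argument, treat $u$ as smooth with controlled decay so that $u\in\dot W^{p,\alpha}(\omega,\omega;E)$ for all $\alpha$ close to $1$. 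For $\gamma<1$ close to $1$ one has $\alpha=\tfrac1{1-p(1-\gamma)}>1$, and by the remark following Theorem \ref{th:Lam} (namely Proposition \ref{Lam-equivalence-log}) the standing \emph{log-concavity} of $\omega$ suffices to apply Theorem \ref{th:Lam}(i). Taking logarithms in \eqref{eq:Lam1} and rearranging, I would write the inequality as $h(\gamma)\le R(\gamma)$, where
\[
h(\gamma)=\frac1{\alpha p}\log\!\int_E u^{\alpha p}\omega\,dx-\frac{1-\theta_1}{\alpha p\gamma}\log\!\int_E u^{\alpha p\gamma}\omega\,dx,\qquad R(\gamma)=\log\tilde{\sf C}_1+\frac{\theta_1}{p}\log\!\int_E|\nabla u|^p\omega\,dx.
\]

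At $\gamma=1$ one has $\alpha=1$, $\theta_1=0$, and both integrals on the left equal $\int_E u^p\omega\,dx=1$, so $h(1)=0$; the key normalization fact (see the next paragraph) is that $\tilde{\sf C}_1\to1$, whence $R(1)=0$ as well. Since $h(\gamma)\le R(\gamma)$ for $\gamma<1$ with equality at $\gamma=1$, dividing by $(\gamma-1)<0$ and letting $\gamma\to1^-$ reverses the inequality and yields $h'(1)\ge R'(1)$. A direct differentiation, using $\tfrac{d\alpha}{d\gamma}\big|_{\gamma=1}=-p$, $\tfrac{d(\alpha\gamma)}{d\gamma}\big|_{\gamma=1}=-(p-1)$ and $\theta_1(1)=0$, gives
\[
h'(1)=-\frac1p\int_E u^p\log u^p\,\omega\,dx,\qquad \theta_1'(1)=-\frac{n+\tau}{p},
\]
so that $R'(1)=\big[\tfrac{d}{d\gamma}\log\tilde{\sf C}_1\big]_{\gamma=1}-\tfrac{n+\tau}{p^2}\log\int_E|\nabla u|^p\omega\,dx$. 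Multiplying the resulting inequality $h'(1)\ge R'(1)$ by $-p$ produces exactly the entropy term on the left and the term $\tfrac{n+\tau}{p}\log\int_E|\nabla u|^p\omega\,dx$ on the right, so \eqref{sharp-log-Sobolev} follows once the constant is identified through $-p\big[\tfrac{d}{d\gamma}\log\tilde{\sf C}_1\big]_{\gamma=1}=\tfrac{n+\tau}{p}\log\mathcal L_{\omega,p}$.

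The main obstacle is the asymptotic analysis of the sharp constant $\tilde{\sf C}_1=\tilde{\sf C}_1(\gamma)$ from Theorem \ref{th:Lam}(i) as $\gamma\to1^-$. Here I would insert the explicit expression of $\tilde{\sf C}_1$ in terms of $\alpha$, $p'$, $n+\tau$ and the two Beta-function values, and exploit that the first argument of each Beta factor blows up like $\tfrac{\alpha p}{\alpha-1}\sim\tfrac1{1-\gamma}\to+\infty$. Using the Stirling-type asymptotics $B(a,b)=\Gamma(b)\,a^{-b}\big(1+O(1/a)\big)$ as $a\to+\infty$, together with the elementary limit $\big(1+\tfrac{c}{a}\big)^{a}\to e^{c}$, many individually divergent powers combine into convergent expressions; I expect $\tilde{\sf C}_1\to1$ and the first-order coefficient to produce both the Gamma factor $\Gamma\!\big(\tfrac{n+\tau}{p'}+1\big)$ and the factor $\big(\tfrac{p-1}{e}\big)^{p-1}$ appearing in $\mathcal L_{\omega,p}$, with $\omega_{SE}=(n+\tau)\int_{B\cap E}\omega$ converting the spherical integral into the stated ball integral. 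This bookkeeping of exponents --- each of the form (blowing-up power)$\times$(vanishing exponent) --- is the delicate point and must be carried out with Taylor expansions to first order in $(1-\gamma)$.

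Finally, for the equality cases I would pass to the limit in the extremal family of Theorem \ref{th:Lam}(i). Writing the extremals as $w_\lambda(x)=A(\lambda+|x+x_0|^{p'})^{1/(1-\alpha)}$ with $\tfrac1{1-\alpha}=-\tfrac1{\alpha-1}\to-\infty$, and choosing $\lambda=\lambda(\gamma)\to+\infty$ so that $\tfrac1{\lambda(\alpha-1)}$ tends to a finite constant, the identity $\big(1+\tfrac{s}{\lambda}\big)^{-1/(\alpha-1)}\to e^{-cs}$ turns $w_\lambda$ into the Gaussian profile \eqref{Gaussian}; the normalizing constant is then fixed by the Gaussian integral \eqref{gaussian-computation}, which is itself the $\gamma\to1$ limit of \eqref{Beta-function-0} and accounts for the factor $\Gamma\!\big(\tfrac{n+\tau}{p'}+1\big)$. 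The extension from smooth $u$ to arbitrary $u\in W^{1,p}(\omega;E)$ is handled by a standard density and truncation argument together with lower semicontinuity of the entropy functional, while the condition $x_0\in-\overline E\cap\overline E$ with $\omega(\cdot+x_0)=\omega(\cdot)$ is inherited from the corresponding constraint in Theorem \ref{th:Lam}. The same conclusion can alternatively be reached by letting $\gamma\to1^+$ in Theorem \ref{th:Lam}(ii), where log-concavity directly implies the required $\tfrac{1-\gamma}{1-n(1-\gamma)}$-concavity via Remark \ref{p-concave-monotonicity}(ii).
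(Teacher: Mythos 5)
Your proposal is correct and follows essentially the same route as the paper: the paper's own (very terse) proof is precisely ``a standard limiting argument in Theorem \ref{th:Lam}'' as $\gamma\to 1$, using the Beta-function asymptotics and formula \eqref{gaussian-computation}, with log-concavity sufficing via Proposition \ref{Lam-equivalence-log}, and with the Gaussian equality case obtained (but not characterized) in the limit. Your differentiation-at-$\gamma=1$ scheme, the verification that $\tilde{\sf C}_1\to 1$, and the bookkeeping producing $\Gamma\bigl(\tfrac{n+\tau}{p'}+1\bigr)$ and $\bigl(\tfrac{p-1}{e}\bigr)^{p-1}$ are exactly the details the paper leaves implicit.
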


As expected, by the limiting argument we loose the possibility to characterize the equality in  \eqref{sharp-log-Sobolev}; however, by using a direct approach from the OMT, inequality \eqref{sharp-log-Sobolev} has been recently stated by the authors in \cite{BDK} for every $p>1$, fully  characterizing also the equality cases.

\subsection{Sharp weighted Faber-Krahn and isoperimetric inequalities}\label{subsec-FaberKrahn}
We first prove a weighted Faber-Krahn inequality, by letting $\gamma\to \infty$ in \eqref{eq:Lam2}.

\begin{theorem}\label{th:faber-krahn}
	Let $E\subseteq \mathbb R^n$ be an open convex cone, $\omega\colon E\to (0,+\infty)$ be a log-concave, homogeneous weight of class $\mathcal C^1$ with degree $\tau\geq0$ and let $1<p<n+\tau$. Then for every $u\in C_c^{\infty}(\mathbb R^n)$, we have
	\begin{equation}\label{eq:faber-krahn}
		\int_E |u|\omega dx\leq {\sf C}_\infty\left(\int_E |\nabla u|^p\omega dx\right)^{\frac1p}\left(\int_{{\rm supp}(u)} \omega dx\right)^{\frac 1 {n+\tau}+\frac1{p'}},
	\end{equation}
where 
\[	{\sf C}_\infty=\left(\int_{B\cap E}\omega\right)^{-\frac 1{n+\tau}}(n+\tau)^{-\frac 1{p}}(p'+n+\tau)^{-\frac1{p'}}.\]
Moreover, equality holds in \eqref{eq:faber-krahn} for the class of functions $u_\lambda(x)=(\lambda-|x+x_0|^{p'})_+$, $x\in E,$ $\lambda>0$, where $x_0\in -\overline E\cap \overline E$ with $\omega(x+x_0)=\omega(x)$ for every $x\in E$. 
\end{theorem}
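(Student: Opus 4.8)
The plan is to obtain \eqref{eq:faber-krahn} as the limit $\gamma\to+\infty$ of the sharp one-weighted Gagliardo--Nirenberg inequality \eqref{eq:Lam2} of Theorem~\ref{th:Lam}(ii), specialised to $\omega_1=\omega_2=\omega_3=\omega$, and then to treat the equality case by a direct substitution. First I would check that \eqref{eq:Lam2} is available for every $\gamma>1$: since $\omega$ is log-concave (i.e.\ $0$-concave) and $\frac{1-\gamma}{1-n(1-\gamma)}<0$ when $\gamma>1$, Remark~\ref{p-concave-monotonicity}(ii) shows that $\omega$ is $\frac{1-\gamma}{1-n(1-\gamma)}$-concave; moreover any $u\in C_c^\infty(\mathbb R^n)$ belongs to $\dot W^{p,\alpha}(\omega,\omega;E)$ because $\omega$ is continuous and $u$ has compact support (here $\tau\ge0>-n$ guarantees that $\omega$ is integrable on bounded subsets of $E$). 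Hence \eqref{eq:Lam2} holds for all $\gamma>1$, with $\alpha=\frac{1}{p(\gamma-1)+1}$.

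The next step is to pass to the limit $\gamma\to+\infty$, equivalently $\alpha\to0^+$. On the level of exponents, using $\alpha\gamma\to\frac1p$ one checks $\alpha p\gamma\to1$, $\alpha p\to0$, $\theta_2\to1$, and, after writing $1-\theta_2=\frac{\alpha\gamma(p-\tau-n)+(n+\tau)}{\alpha\gamma(p-\tau-n)+\gamma(n+\tau)}$, that $\frac{1-\theta_2}{\alpha p}\to\frac1{n+\tau}+\frac1{p'}$. On the level of integrals, $\int_E|u|^{\alpha p\gamma}\omega\,dx\to\int_E|u|\,\omega\,dx$ by dominated convergence, and, crucially, $\int_E|u|^{\alpha p}\omega\,dx\to\int_{\{u\neq0\}}\omega\,dx$: indeed $|u|^{\alpha p}$ converges pointwise to $1$ on $\{u\neq0\}$ and to $0$ on $\{u=0\}$, and it is dominated on the compact set ${\rm supp}(u)$ by $\max\{1,\|u\|_\infty\}$. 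Feeding these limits into \eqref{eq:Lam2}, and using that the limiting exponent $\frac1{n+\tau}+\frac1{p'}$ is positive so that replacing $\int_{\{u\neq0\}}\omega\,dx$ by the larger $\int_{{\rm supp}(u)}\omega\,dx$ only weakens the right-hand side, yields \eqref{eq:faber-krahn} once the constant is identified.

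To identify the constant I would compute $\lim_{\gamma\to+\infty}\tilde{\sf C}_2$. As $\alpha\to0^+$ one has $\frac{\alpha(p-1)+1}{1-\alpha}\to1$, so the Beta factor tends to $B\!\left(1,\frac{n+\tau}{p'}\right)=\frac{p'}{n+\tau}$; moreover $\frac{1-\alpha}{p'}\to\frac1{p'}$ and $\frac{\alpha(p-1)+1}{1-\alpha}+\frac{n+\tau}{p'}\to\frac{p'+n+\tau}{p'}$, while the exponents satisfy $\frac{\theta_2}{p}+\frac{\theta_2}{n+\tau}\to\frac1p+\frac1{n+\tau}$ and $\frac{\theta_2}{p}-\frac{1}{\alpha(p-1)+1}\to-\frac1{p'}$. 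Collecting all powers of $p'$, $n+\tau$ and $p'+n+\tau$, the powers of $p'$ cancel (using $\frac1p+\frac1{p'}=1$) and what remains is exactly ${\sf C}_\infty=\left(\int_{B\cap E}\omega\right)^{-\frac1{n+\tau}}(n+\tau)^{-\frac1p}(p'+n+\tau)^{-\frac1{p'}}$, as claimed.

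Finally, the limiting procedure does not transport extremizers, so I would establish the equality case by inserting $u_\lambda(x)=(\lambda-|x+x_0|^{p'})_+$ directly into \eqref{eq:faber-krahn}. After translating so that $x_0=0$ (legitimate since $\omega(x+x_0)=\omega(x)$), the three integrals are evaluated by \eqref{Beta-function} with $(q,s)=(1,0),\,(0,p'),\,(0,0)$: here $\int_E|\nabla u_\lambda|^p\omega\,dx=(p')^p\int_{\{|x|^{p'}<\lambda\}\cap E}|x|^{p'}\omega\,dx$ because $(p'-1)p=p'$, and one uses $B\!\left(2,\frac{n+\tau}{p'}\right)=\frac{(p')^2}{(n+\tau)(n+\tau+p')}$ together with $B(1,s)=1/s$. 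A short computation shows that both sides of \eqref{eq:faber-krahn} equal $\frac{p'}{(n+\tau)(n+\tau+p')}\,\omega_{SE}$, where $\omega_{SE}=(n+\tau)\int_{B\cap E}\omega$, so equality holds. I expect the main obstacle to be the degeneration of the exponent $\alpha p\to0$: this is the step where the $L^{\alpha p}(\omega)$-norm collapses to the $\omega$-measure of the support, and it requires care both in justifying the dominated convergence and in passing from $\int_{\{u\neq0\}}\omega\,dx$ to $\int_{{\rm supp}(u)}\omega\,dx$, which is admissible only because the associated exponent is positive.
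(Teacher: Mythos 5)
Your proposal is correct and follows essentially the same route as the paper: take $\gamma\to\infty$ in the one-weighted Gagliardo--Nirenberg inequality \eqref{eq:Lam2} (available since log-concavity implies $\frac{1-\gamma}{1-n(1-\gamma)}$-concavity for $\gamma>1$ by Remark~\ref{p-concave-monotonicity}), compute the limits of the exponents and of $\tilde{\sf C}_2$, and verify the equality case by plugging $u_\lambda$ into \eqref{Beta-function} with $B(2,x)=\frac{1}{x(x+1)}$. Your treatment of the limit $\int_E|u|^{\alpha p}\omega\,dx\to\int_{\{u\neq 0\}}\omega\,dx$ and the subsequent enlargement to $\int_{{\rm supp}(u)}\omega\,dx$ supplies a detail the paper's proof leaves implicit, but it does not change the argument.
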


\begin{proof} Since $\omega$ is log-concave (i.e., $0$-concave), by  Remark \ref{p-concave-monotonicity} we know that it is also 
$
	\frac{1-\gamma}{1-n(1-\gamma)}$-concave for every $\gamma\geq 1$. 
	We can apply Theorem \ref{th:Lam} for every $\gamma\geq 1$ and we can let $\gamma\to \infty$ in \eqref{eq:Lam2}. It is a simple computation to verify that
	\[
	\alpha\to 0, \quad \alpha\gamma\to \frac 1p,\quad \vartheta_2\to 1, \quad \frac{1-\vartheta_2}{\alpha p}\to \frac{1}{n+\tau}+\frac1{p'},
	\] 
	as $\gamma\to \infty$. Adding up these information we obtain
	\[
	\begin{aligned}
		{\sf C}_\infty&=\lim_{\gamma\to \infty} \tilde {\sf C}_2
		&=\left(\int_{B\cap E}\omega\right)^{-\frac 1{n+\tau}}(n+\tau)^{-\frac 1{p}}(p'+n+\tau)^{-\frac1{p'}}.
	\end{aligned}
	\]
	
The equality in \eqref{eq:faber-krahn} for $u_\lambda(x)=(\lambda-|x+x_0|^{p'})_+$, $\lambda >0,$ can be easily verified by using formula \eqref{Beta-function} and the fact that  $B(2,x)=\frac 1 {x(x+1)}$  for every $x>0$.  
\end{proof}

A simple consequence of the  Faber-Krahn inequality from Theorem \ref{th:faber-krahn} is a new sharp isoperimetric-type inequality with weights. To state it, we recall that the space of functions with weighted bounded variation $BV(\omega;E)$ contains all functions $u \in L^1(\omega;E)$ such that 
\[\sup \left\{ \int_E u \text{div}(\omega X)  dx: X \in C^1_c(E; \mathbb R^n) , |X| \leq 1 \right\} < +\infty.\]
Note that ${W}^{1,1}(\omega;E)\subset BV(\omega;E)$. 
We can associate to any function $u\in BV(\omega;E)$ its  weighted variation measure
$\|Du\|_{\omega}$ similarly as in the usual non-weighted case. Using this notation and $\mathbbm{1}_S$ for the characteristic function of the nonempty set $S\subset \mathbb R^n$, we have:

\begin{theorem}\label{thm-corollary-FK}
	Let $E\subseteq \mathbb R^n$ be an open convex cone, $\omega\colon E\to (0,+\infty)$ be a log-concave, homogeneous weight of class $\mathcal C^1$ with degree $\tau\geq0$. Then for every $u\in BV(\omega;E)$, we have
\begin{equation}\label{eq:faber-krahn0}
	\int_E |u|\omega dx\leq \tilde {\sf C}_\infty \| D(|u|)\|_\omega(E)\left(\int_{{\rm supp}(u)} \omega dx\right)^{\frac 1 {n+\tau}},
\end{equation}
where 
\begin{equation}
	\label{constant-c-tilde-1}
	\tilde {\sf C}_\infty=\left(\int_{B\cap E}\omega\right)^{-\frac 1{n+\tau}}(n+\tau)^{-1}.
\end{equation}
	Moreover, equality holds in \eqref{eq:faber-krahn0} for the class of characteristic functions $u_\lambda(x)=\mathbbm{1}_{ B(-x_0,\lambda)\cap E}(x)$, $x\in E,$ $\lambda>0$, where $x_0\in -\overline E\cap \overline E$ with $\omega(x+x_0)=\omega(x)$ for every $x\in B(0,\lambda)\cap E$. 
\end{theorem}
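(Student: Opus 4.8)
The plan is to derive the isoperimetric inequality \eqref{eq:faber-krahn0} from the weighted Faber-Krahn inequality \eqref{eq:faber-krahn} of Theorem~\ref{th:faber-krahn} by a limiting argument as $p \to 1^+$. The Faber-Krahn inequality is proved for $1 < p < n+\tau$ and for test functions $u \in C_c^\infty(\mathbb{R}^n)$; the isoperimetric inequality corresponds formally to the endpoint $p=1$, where $p' \to +\infty$. Thus the exponent $\frac{1}{n+\tau}+\frac{1}{p'}$ in \eqref{eq:faber-krahn} collapses to $\frac{1}{n+\tau}$ in \eqref{eq:faber-krahn0}, and the gradient term $\left(\int_E |\nabla u|^p \omega\,dx\right)^{1/p}$ should converge to the weighted total variation $\|D(|u|)\|_\omega(E)$. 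I would first verify the convergence of the constant, namely that
\[
\lim_{p \to 1^+} {\sf C}_\infty = \lim_{p\to 1^+}\left(\int_{B\cap E}\omega\right)^{-\frac{1}{n+\tau}}(n+\tau)^{-\frac1p}(p'+n+\tau)^{-\frac1{p'}} = \left(\int_{B\cap E}\omega\right)^{-\frac{1}{n+\tau}}(n+\tau)^{-1} = \tilde{\sf C}_\infty,
\]
using that $(n+\tau)^{-1/p} \to (n+\tau)^{-1}$ and $(p'+n+\tau)^{-1/p'} \to 1$ as $p' \to \infty$.

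The second step is to handle the approximation in the functional. For smooth compactly supported $u$, as $p \to 1^+$ one has $\int_E |\nabla u|^p \omega\,dx \to \int_E |\nabla u|\,\omega\,dx = \|D(|u|)\|_\omega(E)$ restricted to such regular $u$ (away from the critical set $\nabla u = 0$ this is elementary dominated convergence, and the support being compact keeps everything integrable). Applying \eqref{eq:faber-krahn} with a fixed smooth $u$ and letting $p \to 1^+$ then yields \eqref{eq:faber-krahn0} for $u \in C_c^\infty(\mathbb{R}^n)$. To pass to the general case $u \in BV(\omega;E)$, I would invoke a density/approximation argument: one approximates $u \in BV(\omega;E)$ by a sequence of smooth functions $u_h$ with $\|D(|u_h|)\|_\omega(E) \to \|D(|u|)\|_\omega(E)$ and $\int_E |u_h|\omega\,dx \to \int_E |u|\omega\,dx$, controlling also the support term $\int_{\mathrm{supp}(u_h)}\omega\,dx$. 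Since ${W}^{1,1}(\omega;E) \subset BV(\omega;E)$, it suffices to treat the $W^{1,1}$ case first and then use lower semicontinuity of the total variation together with strict approximation in the weighted $BV$ norm.

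The equality case for the characteristic functions $u_\lambda = \mathbbm{1}_{B(-x_0,\lambda)\cap E}$ is the most delicate point, and I would approach it by direct computation rather than by limiting from the Faber-Krahn extremals $(\lambda - |x+x_0|^{p'})_+$. For $u_\lambda$, the left-hand side is $\int_{B(-x_0,\lambda)\cap E}\omega\,dx$, the support term is the same integral, and the weighted perimeter $\|D(\mathbbm{1}_{B(-x_0,\lambda)\cap E})\|_\omega(E)$ is the weighted surface measure of the spherical part of the boundary; using the homogeneity of $\omega$, the condition $\omega(x+x_0)=\omega(x)$, and formula \eqref{Beta-function} (or a coarea computation) one checks that the ratio matches $\tilde{\sf C}_\infty$ exactly.

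\textbf{The main obstacle} I expect is the approximation step from $C_c^\infty$ to general $BV(\omega;E)$: because the inequality carries the support-dependent factor $\left(\int_{\mathrm{supp}(u)}\omega\,dx\right)^{1/(n+\tau)}$, a naive mollification can enlarge the support and is not automatically compatible with this term. One must choose an approximating sequence whose supports do not blow up the weighted measure — for instance by truncating and using that the weight is locally integrable — so that the support factor converges or is at least controlled from above in the limit. Managing the interplay between weighted strict $BV$-convergence, lower semicontinuity of $\|D(|u|)\|_\omega$, and the upper control of the support term is where the real care is needed; everything else reduces to the two routine limits $p \to 1^+$ and the explicit verification on balls.
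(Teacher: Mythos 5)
Your proposal is correct and follows essentially the same route as the paper: the paper's proof is precisely the limit $p\to 1^+$ in Theorem~\ref{th:faber-krahn} (with the constant ${\sf C}_\infty\to\tilde{\sf C}_\infty$ exactly as you compute) together with a direct verification of equality for the characteristic functions $\mathbbm{1}_{B(-x_0,\lambda)\cap E}$. In fact you go further than the paper, which silently passes from $C_c^\infty(\mathbb R^n)$ test functions to general $u\in BV(\omega;E)$; your truncation-plus-mollification scheme with lower semicontinuity of $\|D(|u|)\|_\omega$ and control of the weighted measure of the enlarged support is a legitimate way to fill that implicit step.
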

\begin{proof}
	We take the limit $p\to 1$ in Theorem \ref{th:faber-krahn}. The equality in \eqref{eq:faber-krahn0} can be easily checked for the class of characteristic functions $u_\lambda(x)=\mathbbm{1}_{ B(-x_0,\lambda)\cap E}(x)$, $x\in E,$ $\lambda>0$ with the suitable properties on $x_0.$
\end{proof}

By taking $u=\mathbbm{1}_{S\cap E}$ $(S\subset \mathbb R^n)$ in Theorem \ref{thm-corollary-FK}, we obtain in an alternative way the sharp weighted isoperimetric inequality stated in \cite{BGK_PLMS}, \cite{Cabre-Ros-Oton-Serra} and \cite{Lam}. For the purpose, we recall that a measurable set $S\subset \mathbb R^n$ has bounded $\omega$-variation on $E$ if $\mathbbm{1}_{S}\in BV(\omega;E)$, moreover, its weighted
perimeter with respect to the convex cone $E$ is given by $P_\omega(S;E)=\|D\mathbbm{1}_{S}\|_\omega(E)$. 

\begin{corollary}\label{isoperimetric-weighted}
	Let $E\subseteq \mathbb R^n$ be an open convex cone, $\omega\colon E\to (0,+\infty)$ be a log-concave, homogeneous weight of class $\mathcal C^1$ with degree $\tau\geq0$. Then for every measurable set $S\subset \mathbb R^n$ with bounded $\omega$-variation on $E$, we have
	\begin{equation}\label{eq:faber-krahn0-0}
	 \tilde{\sf C}_\infty^{-1}	\left(\int_{S\cap E} \omega dx\right)^{1-\frac 1 {n+\tau}}\leq  P_\omega(S;E),
	\end{equation}
	where 
	$	\tilde {\sf C}_\infty$ is from \eqref{constant-c-tilde-1}. 
	Moreover, equality holds in \eqref{eq:faber-krahn0-0} for any ball of the form $S= B(-x_0,\lambda)$, $\lambda>0$, 
 where $\omega(x+x_0)=\omega(x)$ for every $x\in B(0,\lambda)\cap E$. 
\end{corollary}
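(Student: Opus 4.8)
The plan is to derive inequality \eqref{eq:faber-krahn0-0} directly from the weighted Faber--Krahn inequality in Theorem~\ref{thm-corollary-FK}, by specializing it to characteristic functions. Concretely, given a measurable set $S\subset\mathbb R^n$ with bounded $\omega$-variation on $E$, I would set $u\coloneqq\mathbbm{1}_{S\cap E}$. By the definition of bounded $\omega$-variation we have $\mathbbm{1}_S\in BV(\omega;E)$, and since the restriction of $\mathbbm{1}_{S\cap E}$ to $E$ coincides with that of $\mathbbm{1}_S$, it follows that $u\in BV(\omega;E)$; in particular $u\in L^1(\omega;E)$, so $\int_{S\cap E}\omega\,dx<+\infty$. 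If this integral vanishes the left-hand side of \eqref{eq:faber-krahn0-0} is zero and there is nothing to prove, so I may assume $0<\int_{S\cap E}\omega\,dx<+\infty$.

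The key step is then to identify the three quantities appearing in \eqref{eq:faber-krahn0}. Since $u\geq0$ we have $|u|=u$, hence $\int_E|u|\omega\,dx=\int_{S\cap E}\omega\,dx$, and the essential support of $u$ is $S\cap E$, so that $\int_{\mathrm{supp}(u)}\omega\,dx=\int_{S\cap E}\omega\,dx$. For the variation term I would observe that for every test field $X\in C^1_c(E;\mathbb R^n)$ one has $\int_E\mathbbm{1}_{S\cap E}\,\mathrm{div}(\omega X)\,dx=\int_E\mathbbm{1}_S\,\mathrm{div}(\omega X)\,dx$, because the two integrands agree on $E$; taking the supremum over such $X$ with $|X|\le1$ gives $\|D(|u|)\|_\omega(E)=\|D\mathbbm{1}_S\|_\omega(E)=P_\omega(S;E)$. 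Substituting these three identifications into \eqref{eq:faber-krahn0} yields
\begin{equation*}
\int_{S\cap E}\omega\,dx\leq\tilde{\sf C}_\infty\,P_\omega(S;E)\left(\int_{S\cap E}\omega\,dx\right)^{\frac1{n+\tau}}.
\end{equation*}
Dividing by $\big(\int_{S\cap E}\omega\,dx\big)^{1/(n+\tau)}$, which is positive and finite by the reduction above, and rearranging produces exactly \eqref{eq:faber-krahn0-0} with the same constant $\tilde{\sf C}_\infty$ from \eqref{constant-c-tilde-1}.

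Finally, for the equality case I would invoke the rigidity already recorded in Theorem~\ref{thm-corollary-FK}, whose extremals are the characteristic functions $u_\lambda=\mathbbm{1}_{B(-x_0,\lambda)\cap E}$ with $x_0\in-\overline E\cap\overline E$ and $\omega(x+x_0)=\omega(x)$ on $B(0,\lambda)\cap E$. Choosing $S=B(-x_0,\lambda)$ makes $\mathbbm{1}_{S\cap E}$ coincide with such an extremal $u_\lambda$, so every inequality used in the reduction becomes an equality and \eqref{eq:faber-krahn0-0} holds with equality; alternatively this can be checked by a direct computation using formula \eqref{Beta-function} exactly as in the proof of Theorem~\ref{th:faber-krahn}. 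The only points deserving care are the two measure-theoretic identifications in the second step---namely that passing from $\mathbbm{1}_S$ to $\mathbbm{1}_{S\cap E}$ does not alter the weighted variation measure on $E$, and that the weighted measure of the support equals that of $S\cap E$---but both are immediate once one works with test fields supported inside the cone. I therefore expect no genuine analytic obstacle: the statement is a clean specialization of the weighted Faber--Krahn inequality, with all the analytic content already carried by Theorem~\ref{thm-corollary-FK}.
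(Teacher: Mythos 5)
Your proposal is correct and follows essentially the same route as the paper: the paper's proof of Corollary~\ref{isoperimetric-weighted} consists precisely of taking $u=\mathbbm{1}_{S\cap E}$ in Theorem~\ref{thm-corollary-FK}, identifying $\int_E|u|\omega\,dx$ and $\int_{{\rm supp}(u)}\omega\,dx$ with $\int_{S\cap E}\omega\,dx$ and $\|D(|u|)\|_\omega(E)$ with $P_\omega(S;E)$, and rearranging, with the equality case inherited from the extremal characteristic functions of Theorem~\ref{thm-corollary-FK}. Your write-up merely supplies the routine details (the trivial case $\int_{S\cap E}\omega\,dx=0$ and the test-field verification of the variation identity) that the paper leaves implicit.
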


\begin{remark}\rm Note that the limiting arguments in Theorems  \ref{th:faber-krahn} and \ref{thm-corollary-FK} as well as in  Corollary \ref{isoperimetric-weighted} prevent the characterization of the equality cases. A direct proof, similar to  \cite{BDK},  could give an affirmative answer to this question. 
\end{remark}

\section{Sharpness versus equality of the weights: proof of Theorem \ref{prop:equalitycase-intro}}\label{section-4}



In the sequel we are going to prove Theorem \ref{prop:equalitycase-intro}.  In fact, we prove a stronger statement, namely that the following facts hold: 
		\begin{itemize} 
		\item[(i)] $\omega_2=A\omega_1$ for some $A>0$ $($thus $\tau_1=\tau_2\eqqcolon \tau$$);$
		\item[(ii)]
		the optimal transport function, appearing in the proof of  Theorem \ref{th:CKN2weights},  is of the form $\nabla\phi(x)=cx+x_0$ for some $x_0 \in \overline E\cap (-\overline E)$ and $c >0$ such that $c^{\tau(1-\gamma) 
		}=(cC_0)^{1-n(1-\gamma)} $. In addition, either 
		\begin{itemize}
			\item[(ii1)] $\gamma=1-\frac{1}{n+\tau},$
			thus $C_0=1$ and  $K=-\frac 1{1-\gamma}$   $($and  $\omega_3$ disappears$),$ or
			\item[(ii2)] 
			$\gamma\neq 1-\frac{1}{n+\tau},$
			thus  $\omega_3=	cC_0A^\frac{1}{p}\left(\frac{1}{1-\gamma}-n-\tau\right)\left(\frac{1}{1-\gamma}+K\right)^{-1}{\omega_1}$ for  $A>0$ from ${\rm (i)};$ 
		\end{itemize} 
		\item[(iii)] the weights $\omega_j$ are $\frac{1-\gamma}{1-n(1-\gamma)}$-concave on $E$, $j\in \{1,2,3\};$
		\item[(iv)]  $\nabla\omega_j(x)\cdot x_0=0$ for every $x\in E$ and $j\in \{1,2,3\};$
		
	\end{itemize}
	
	Let us now assume that equality holds in \eqref{eq:CKN2weights} for some $u\in \dot{W}^{p,\alpha}(\omega_1,\omega_2;E)\setminus \{0\}$ and the infimum in \eqref{C-1-2-constant-initial} is achieved for some $G\in \mathcal G_{\omega_1,\omega_2}\setminus \{0\}$. 
 In particular, all the inequalities in the proof of Theorem~\ref{th:CKN2weights} must be equalities. The proof is divided into several steps. 
		
\underline{Step 1.} (Regularity of the transport map). This part is similar to the argument performed in the proof of Theorem \ref{th:Lam}. For completeness, we provide its proof. 	Consider the convex set $\Omega=\{x\in E:\phi(x)<+\infty \}$. 
	One can prove that  $u$ is locally Lipschitz and strictly positive in the interior of $\Omega$.
	By that equality in \eqref{eq:CauchySchwartz}, we have that 
		\begin{equation}\label{eq:optimalYoung}
	|\nabla u|^{p}\omega_2=Mu^{(\alpha p \gamma -1)p'}|\nabla \phi|^{p'}\omega_1,\quad \text{a.e.\ on $\Omega$},
	\end{equation}
	for some $M>0$.
	By considering for any $k\in \mathbb N$ the truncation $u_k\coloneqq \max\{\frac 1k, u\}$, relation  \eqref{eq:optimalYoung}  and $(\alpha p \gamma -1)p'=\alpha p$ imply that
	\begin{equation}\label{eq:optimalYoungineq}
	Mu_k^{\alpha p} |\nabla\phi|^{p'}\omega_1\geq Mu^{\alpha p} |\nabla\phi|^{p'}\omega_1=|\nabla u|^p\omega_2\geq |\nabla u_k|^p\omega_2, \quad \text{a.e.\ on $\Omega$}. 
	\end{equation} 
	 Consequently,  we obtain
	\begin{equation}\label{eq:u1-alpha}
	|\nabla (u_k^{1-\alpha})|^p\omega_2\leq(\alpha-1)^pM |\nabla \phi|^{p'}\omega_1, \quad \text{a.e. on $\Omega$}.
	\end{equation}
	Let us fix a compact set $\mathcal K\subset \Omega$ with the property that there exists $x_0\in \mathcal K$ with $u(x_0)>0$. We know that  $|\nabla \phi|$ is bounded on $\mathcal K$; in particular,  $|\nabla (u_k^{1-\alpha})|$ is also bounded on $\mathcal K$, which implies the fact that $u_k^{1-\alpha}$ are uniformly Lipschitz on $\mathcal K$. Since by definition $u_k$ are uniformly converging to $u$ on $\mathcal K$, and $u(x_0)>0$, then also $u_k^{1-\alpha}$ is converging to $u^{1-\alpha}$ uniformly on $\mathcal K$ and the map $u^{1-\alpha}$ is Lipschitz on $\mathcal K$. 
	Thus there exists $c_{\mathcal K}>0$ such that
	$u^{1-\alpha}(x)\leq c_{\mathcal K}$ for every $x\in \mathcal K$.
	Since  $\alpha>1$, it turns out that 
	$u(x)\geq c_{\mathcal K}^{\frac 1{1-\alpha}}$ for every $ x\in \mathcal K$.
	By repeating the same approximation argument of Proposition~\ref{prop:integrationbyparts} we can find a sequence $u_{h} \in C_c^{\infty}(E)$ that is converging to $u$ uniformly on $\mathcal K$ and
	\[
	u_h(x)\geq \frac{c_{\mathcal K}^{\frac 1{1-\alpha}}}2 \quad \forall x\in \mathcal K,
	\]
	for every sufficiently large $h$. By the equalities in \eqref{eq:stimaRHS} and \eqref{eq:stimaRHS-2} we get that
	\[
	0=\liminf_h \langle u_h\omega_2^{1/p}\omega_1^{1/{p'}}, \Delta_s\phi \rangle_{\mathcal D'} \geq \frac{c_k^{\frac1{1-\alpha}}}2\min_{\mathcal K}\omega_2^{1/p}\omega_1^{1/{p'}}\Delta_s\phi[\mathcal K].
	\]
	In particular, we obtain that $\Delta_s\phi=0$ on $\Omega$, which ends the proof of the regularity of $\phi.$  For a similar argument, with more details we refer the reader to \cite[Subsection~3.2]{BDK}.
	
	For further use, we introduce the cone generated by $\Omega$, i.e. 
	$${\sf Cone}_\Omega=\{\lambda x:x\in \Omega,\ \lambda>0\}.$$
		 
\underline{Step 2.} ($\omega_2=A\omega_1$ for some $A>0$ on ${\sf Cone}_\Omega$)
Consider the equality in \eqref{eq:from21toC}; by  the equality in Lemma \ref{tr-det-inequality} we have that for a.e. $x\in \Omega$, 
$$
C_0\left(\frac{\omega_2(x)}{\omega_1(x)}\right)^{\frac1p}D^2_{A}\phi(x)=\left(\frac{\omega_2(\nabla\phi(x))^{\frac 1p}\omega_1(\nabla\phi(x))^{\frac 1{p'}-\gamma}}{\omega_2(x)^{\frac np(1-\gamma)}\omega_1(x)^{\left(1-\frac np\right)(1-\gamma)}}\right)^{\frac1{1-n(1-\gamma)}}I_n,$$ 
i.e., $$
D^2_A\phi(x)=C_0^{-1}\left(\frac{\omega_2(\nabla\phi(x))^{\frac 1p}\omega_1(\nabla\phi(x))^{\frac 1{p'}-\gamma}}{\omega_2(x)^{\frac 1p}\omega_1(x)^{\frac 1{p'}-\gamma}}\right)^{\frac 1{1-n(1-\gamma)}}I_n.$$
Since $D^2_A\phi=D^2_{\mathcal D'}\phi$, by the Schwartz Lemma about the equality of the mixed partial derivatives,  the only possibility for the equality $D^2_{\mathcal D'}\phi(x)=f(x,\nabla\phi(x)) I_n$ to hold is that $f(x,\nabla \phi(x))=c$ for some real constant $c\in \mathbb R$. Since $\phi$ is convex we have that $c>0$. We can conclude that 
\begin{equation}\label{eq:phi-0}
\begin{aligned}
&D^2_{\mathcal D'}\phi(x)=c I_n\quad &&\text{in distributional sense on $\Omega$}\\
&\nabla\phi(x)=cx+x_0, \quad &&x\in \Omega
\end{aligned}
\end{equation}
for some $x_0\in \mathbb R^n$. 
Furthermore, this gives that for every $x\in \Omega$, we have
$$\frac{\omega_2(\nabla\phi(x))^{\frac 1p}\omega_1(\nabla\phi(x))^{\frac 1{p'}-\gamma}}{\omega_2(x)^{\frac 1p}\omega_1(x)^{\frac1{p'}-\gamma}}=(cC_0)^{1-n(1-\gamma)},$$
i.e.,
\begin{equation}\label{eq:cweights}
\omega_2(cx+x_0)^{\frac 1p}\omega_1(cx+x_0)^{\frac 1{p'}-\gamma}=(cC_0)^{1-n(1-\gamma)}\omega_2(x)^{\frac 1p}\omega_1(x)^{\frac 1{p'}-\gamma}.
\end{equation}
 Taking the gradient of the previous identity we get
 	\begin{equation}\label{eq:gradients}
 	\begin{split}
 	c\omega_2(cx+x_0)^{\frac1p}\omega_1(cx+x_0)^{\frac 1{p'}-\gamma}\left(\frac{1}{p}\frac{\nabla\omega_2(cx+x_0)}{\omega_2(cx+x_0)}+\left(\frac{1}{p'}-\gamma\right)\frac{\nabla\omega_1(cx+x_0)}{\omega_1(cx+x_0)}\right)=\\ (cC_0)^{1-n(1-\gamma)}\omega_2(x)^{\frac 1p}\omega_1(x)^{\frac 1{p'}-\gamma}\left(\frac{1}{p}\frac{\nabla\omega_2(x)}{\omega_2(x)}+\left(\frac{1}{p'}-\gamma\right)\frac{\nabla\omega_1(x)}{\omega_1(x)}\right), \quad x\in \Omega.
 	\end{split}
 	\end{equation}
 	Dividing both sides of \eqref{eq:gradients} by $c$, taking into account \eqref{eq:cweights} and the fact that $\omega_i^{-1}\nabla\omega_i$ is homogeneous of degree $-1$ we obtain
 	\begin{equation}\label{eq:gradientswithc}
 	\frac{1}{p}\frac{\nabla\omega_2(cx+x_0)}{\omega_2(cx+x_0)}+\left(\frac{1}{p'}-\gamma\right)\frac{\nabla\omega_1(cx+x_0)}{\omega_1(cx+x_0)}=\frac{1}{p}\frac{\nabla\omega_2(cx)}{\omega_2(cx)}+\left(\frac{1}{p'}-\gamma\right)\frac{\nabla\omega_1(cx)}{\omega_1(cx)}, \quad x\in \Omega.
 	\end{equation}
 	Moreover, taking the scalar product of \eqref{eq:gradients} with $cx+x_0$ we also infer
 	\begin{equation}\label{eq:scalarx_0}
 	\left(\frac{1}{p}\frac{\nabla\omega_2(x)}{\omega_2(x)}+\left(\frac{1}{p'}-\gamma\right)\frac{\nabla\omega_1(x)}{\omega_1(x)}\right)\cdot x_0=0, \quad x\in \Omega.
 	\end{equation}
 	
 \underline{Step 3.}  The idea is to explore the fact that inequality  holds in \eqref{eq:conditionCalt} for all $x,y \in E$ while equality happens in this inequality for $y=\nabla \phi(x)=cx+x_0$. Namely, for every $x\in E$ one has
 	\begin{equation}\label{equal-1}
 	\begin{split}
\left(\frac1{1-\gamma}-n\right)\left(\frac{\omega_2(cx+x_0)^{\frac 1p}\omega_1(cx+x_0)^{\frac 1{p'}-\gamma}}{\omega_2(x)^{\frac np(1-\gamma)}\omega_1(x)^{\left(1-\frac np\right)(1-\gamma)}}\right)^{\frac1{1-n(1-\gamma)}}=\\
 	=\left(\frac{1}{1-\gamma}+K\right)\frac{\omega_3(x)}{\omega_1(x)}+C_0\frac{\nabla(\omega_2^{1/p}\omega_1^{1/{p'}})(x)}{\omega_1(x)}\cdot (cx+x_0).
 	\end{split}
 	\end{equation}
 	To make use of the above observation we shall define for every $x\in E$ the function $r_x\colon E\to \mathbb R$ by letting
 	\begin{eqnarray*}
 		r_x(y)&=&\left(\frac{1}{1-\gamma}+K\right)\frac{\omega_3(x)}{\omega_1(x)}+C_0\frac{\nabla(\omega_2^{1/p}\omega_1^{1/{p'}})(x)}{\omega_1(x)}\cdot y\\&&-\left(\frac 1{1-\gamma}-n\right)\left(\frac{\omega_2^{\frac 1p}(y)\omega_1^{\frac1{p'}-\gamma}(y)}{\omega_2^{\frac np(1-\gamma)}(x)\omega_1^{\left(1-\frac np\right)(1-\gamma)}(x)}\right)^{\frac1{1-n(1-\gamma)}}.
 	\end{eqnarray*}
 	Due to \eqref{eq:conditionCalt}, one has that 
 	$r_x(y)\geq 0$ for every $x,y\in E$ and by relation 
 	\eqref{equal-1} we have that $r_x(cx+x_0)=0$ for every $x\in\Omega$. Accordingly, for every $x\in \Omega$, the function $r_x$ has its global minimum at $cx+x_0$, thus
 	$\nabla r_x(cx+x_0)=0.$ After a simple computation, the equation $\nabla r_x(cx+x_0)=0$ reduces to
 	\[
 	C_0\frac{\nabla(\omega_2^{1/p}\omega_1^{1/{p'}})(x)}{\omega_1(x)}=$$$$=\frac{1}{1-\gamma}\left(\frac{\omega_2^{\frac 1p}(cx+x_0)\omega_1^{\frac1{p'}-\gamma}(cx+x_0)}{\omega_2^{\frac np(1-\gamma)}(x)\omega_1^{\left(1-\frac np\right)(1-\gamma)}(x)}\right)^{\frac1{1-n(1-\gamma)}}\cdot\left(\frac{1}{p}\frac{\nabla \omega_2(cx+x_0)}{\omega_2(cx+x_0)}+\left(\frac{1}{p'}-\gamma\right)\frac{\nabla \omega_1(cx+x_0)}{\omega_1(cx+x_0)}\right),
 	\]
 	for every $x\in \Omega$. Applying \eqref{eq:cweights} for the first term on the RHS and \eqref{eq:gradientswithc} for the second term, we can conclude
 	\[
 	\frac{\nabla(\omega_2^{1/p}\omega_1^{1/{p'}})(x)}{\omega_1(x)}=\frac{c}{1-\gamma}\left(\frac{\omega_2(x)}{\omega_1(x)}\right)^{\frac1{p}}\cdot\left(\frac{1}{p}\frac{\nabla \omega_2(cx)}{\omega_2(cx)}+\left(\frac{1}{p'}-\gamma\right)\frac{\nabla \omega_1(cx)}{\omega_1(cx)}\right),\ x\in \Omega.
 	\]
 	By the $(-1)$-homogeneity of $\frac{\nabla \omega_i}{\omega_i}$ we infer 
 	\[
 	\frac{\nabla(\omega_2^{1/p}\omega_1^{1/{p'}})(x)}{\omega_1(x)}=\frac{1}{1-\gamma}\left(\frac{\omega_2(x)}{\omega_1(x)}\right)^{\frac1{p}}\cdot\left(\frac{1}{p}\frac{\nabla \omega_2(x)}{\omega_2(x)}+\left(\frac{1}{p'}-\gamma\right)\frac{\nabla \omega_1(x)}{\omega_1(x)}\right),\ x\in \Omega,
 	\]
 	which is equivalent to 
 	\[
 	\frac{1}{p}\frac{\nabla \omega_2(x)}{\omega_2(x)}+\frac{1}{p'}\frac{\nabla \omega_1(x)}{\omega_1(x)}=\frac 1{1-\gamma}\left(\frac{1}{p}\frac{\nabla \omega_2(x)}{\omega_2(x)}+\left(\frac{1}{p'}-\gamma\right)\frac{\nabla \omega_1(x)}{\omega_1(x)}\right),\ x\in \Omega.
 	\]
 	Reorganizing the above relation, and taking into account that $\gamma\neq 0$, we obtain that
 	\[
 	\frac{\nabla \omega_2(x)}{\omega_2(x)}=\frac{\nabla\omega_1(x)}{\omega_1(x)}, \ x\in \Omega.
 	\]
 	This implies, that there exists $A>0$ such that 
 	\begin{equation}\label{omegas=C}
 	\omega_2(x)=A\omega_1(x),\ x\in {\sf Cone}_\Omega.
 	\end{equation}
In addition to this, we conclude by \eqref{eq:scalarx_0} that
\begin{equation}\label{grad=0}
\nabla \omega_i(x)\cdot x_0=0,\ \forall x\in {\sf Cone}_\Omega,\ i\in \{1,2\}.
\end{equation}

\underline{Step 4.} (Concavity of the weights on ${\sf Cone}_\Omega$) By the previous step we have that $\tau_1 = \tau_2$. Denoting by  $\tau\coloneqq\tau_1=\tau_2$, we can combine \eqref{eq:scalarx_0}, \eqref{equal-1}, \eqref{eq:cweights} and \eqref{grad=0} to obtain that
\[
cC_0\left(\frac{1}{1-\gamma}-n-\tau\right)A^\frac{1}{p}=\left(\frac{1}{1-\gamma}+K\right)\frac{\omega_3(x)}{\omega_1(x)}, \ x\in {\sf Cone}_\Omega.
\]

There are two subcases. If $\gamma=1-\frac{1}{n+\tau}$, then $K=-\frac 1{1-\gamma}$ and $\omega_3$ disappears from the condition (C). 
Otherwise, if $\frac{1}{1-\gamma}-n-\tau\neq 0$ then $\frac{1}{1-\gamma}+K\neq 0$. Therefore, 
\begin{equation}\label{omega-3}
\omega_3(x)=	cC_0A^\frac{1}{p}\left(\frac{1}{1-\gamma}-n-\tau\right)\left(\frac{1}{1-\gamma}+K\right)^{-1}{\omega_1(x)}, \ x\in {\sf Cone}_\Omega.
\end{equation}
Replacing $\omega_2$ and $\omega_3$ from \eqref{omegas=C} and \eqref{omega-3} into condition (C), we obtain that 
\[
\left(\frac{1}{1-\gamma}-n\right)\left(\frac{\omega_1^{1-\gamma}(cy+x_0)}{\omega_1(x)^{1-\gamma}}\right)^\frac{1}{1-n(1-\gamma)}\leq cC_0\left(\frac{1}{1-\gamma}-\tau-n\right)+C_0\frac{ \nabla \omega_1(x)\cdot (cy+x_0)}{\omega_1 (x)}
\]
for all $x\in {\sf Cone}_\Omega$ and $y\in \Omega$. Using \eqref{eq:cweights}, \eqref{omegas=C} and \eqref{eq:scalarx_0} we get
\begin{equation}\label{omega-1-concave}
\left(\frac{1}{1-\gamma}-n\right)\left(\frac{\omega_1(y)}{\omega_1(x)}\right)^{\frac{1-\gamma}{1-n(1-\gamma)}}\leq \frac{1}{1-\gamma}-\tau-n+\frac{ \nabla \omega_1(x)\cdot y}{\omega_1 (x)},\quad \forall x\in {\sf Cone}_\Omega, y\in \Omega.
\end{equation}
We aim to extend \eqref{omega-1-concave} to every $y \in {\sf Cone}_\Omega$. To this end, pick any $\lambda>0$ and apply \eqref{omega-1-concave} to $\frac{1}{\lambda }x\in {\sf Cone}_\Omega$ and $y\in \Omega$. By the homogeneity of $\omega_1$ we get
\[
\left(\frac{1}{1-\gamma}-n\right)\left(\frac{\omega_1(\lambda y)}{\omega_1(x)}\right)^{\frac{1-\gamma}{1-n(1-\gamma)}}\leq \frac{1}{1-\gamma}-\tau-n+\frac{ \nabla \omega_1(x)\cdot (\lambda y)}{\omega_1 (x)},
\]
which, by the arbitrariness of $\lambda>0$, is precisely the $\frac{1-\gamma}{1-n(1-\gamma)}$-concavity of $\omega_1$ on the cone ${\sf Cone}_\Omega$, see Proposition \ref{Lam-equivalence}. 

In the sequel, we may assume  that $\tau=\tau_1=\tau_2>0$; otherwise, the above concavity property with the $0$-homogeneity of the weights implies that  $\omega_i$, $i\in \{1,2\}$ (and $\omega_3$ if it does not disappear) are all constant functions; in this case the proof becomes trivial.  

\underline{Step 5.} (Interplay between $c$ and $C_0$) We shall prove that 
\begin{equation}\label{c-C-0}
	c^{\tau(1-\gamma) 
	}=(cC_0)^{1-n(1-\gamma)}.
\end{equation}
If $\omega\colon E\to (0,+\infty)$ is defined as $\omega(x)=\omega_2(x)^{\frac 1p}\omega_1(x)^{\frac 1{p'}-\gamma}= A^{\frac{1}{p}}\omega_1^{1-\gamma}(x)$, $x\in E$, relation \eqref{eq:cweights} can be equivalently written into 
\begin{equation}\label{omega-reduced}
	\omega(cx+x_0)=(cC_0)^{1-n(1-\gamma)}\omega(x), \ x\in \Omega.
\end{equation}
 Clearly, by \eqref{omegas=C} the degree of homogeneity of $\omega$ is  $\tau(1-\gamma)$.  

Let us fix $x\in {\rm int}(\Omega)\neq \emptyset$. It turns out that there exists a small enough $\lambda_x>0$ such that $\lambda x\in \Omega$ for every $\lambda\in [1-\lambda_x,1+\lambda_x]$. Inserting $\lambda x\in \Omega$ instead of $x$ into \eqref{omega-reduced} with the range $\lambda\in [1-\lambda_x,1+\lambda_x]$, we obtain by the homogeneity of $\omega$ that
$$\omega(c\lambda x+x_0)=(cC_0)^{1-n(1-\gamma)}\omega(\lambda x)=\lambda ^{\tau(1-\gamma)}(cC_0)^{1-n(1-\gamma)}\omega(x)=\lambda ^{\tau(1-\gamma)}\omega(cx+x_0), \ x \in \Omega.$$
Consequently, we have 
$$\omega\left(c x+\frac{x_0}{\lambda}\right)=\omega(c x+x_0), \quad \forall \lambda\in [1-\lambda_x,1+\lambda_x].$$
Repeating the above argument, we obtain for every $k\in \mathbb N$ that
$$\omega\left(c x+\frac{x_0}{\lambda^k}\right)=\omega(c x+x_0),\quad  \forall \lambda\in [1-\lambda_x,1+\lambda_x].$$
Let us choose $\lambda\coloneqq 1+\lambda_x>1$ and take the limit $k\to \infty$ in the latter relation,  that yields
\[\omega\left(c x\right)=\omega(c x+x_0), \quad  \forall x \in \Omega.\]
Combining this relation with \eqref{omega-reduced} and the homogeneity of $\omega$, we obtain \eqref{c-C-0}. 

In addition, if $\gamma=1-\frac{1}{n+\tau}$ (thus  $K=-\frac 1{1-\gamma}$ and $\omega_3$ disappears from condition (C)), it turns out that $1-n(1-\gamma)=\tau(1-\gamma)>0$ and $C_0^{1-n(1-\gamma)}=1$, thus $C_0=1.$

\underline{Step 6.} (Reduction to the one-weighted case \& ${\sf Cone}_\Omega=E$) Since we have equality in  \eqref{eq:CKN2weights} for  $u\in \dot{W}^{p,\alpha}(\omega_1,\omega_2;E)\setminus \{0\}$ and the support of $u$ is a subset of $\Omega\subseteq {\sf Cone}_\Omega$, due to relations \eqref{omegas=C} and \eqref{omega-3} we have that
\begin{eqnarray}\label{equality-final}
	\nonumber	\left(\int_{{\sf Cone}_\Omega} |u|^{\alpha p}\omega_1dx\right)^{\frac{1}{\alpha p}}&=& {\sf C}_1 \left(\int_{{\sf Cone}_\Omega}|\nabla u|^p\omega_2 dx\right)^{\frac{\theta_1}{p}}\left(\int_{{\sf Cone}_\Omega}|u|^{\alpha p \gamma}\omega_3 dx\right)^{\frac{1-\theta_1}{\alpha p \gamma}}\\&=&
		{\sf C}_1\, C_{A,K,C_0} \left(\int_{{\sf Cone}_\Omega}|\nabla u|^p\omega_1 dx\right)^{\frac{\theta_1}{p}}\left(\int_{{\sf Cone}_\Omega}|u|^{\alpha p \gamma}\omega_1 dx\right)^{\frac{1-\theta_1}{\alpha p \gamma}},
\end{eqnarray}
where 
$$C_{A,K,C_0}=A^\frac{\theta_1}{p}\left(cC_0A^\frac{1}{p}\left(\frac{1}{1-\gamma}-n-\tau\right)\left(\frac{1}{1-\gamma}+K\right)^{-1}\right)^\frac{1-\theta_1}{\alpha p\gamma},$$ while
${\sf C}_1=\left(C_{K,L,M,C_0}C_{\mathcal G_{\omega_1,\omega_2}}\right)^\frac{1}{\alpha\gamma M+L}$ and $ \theta_1=\frac{L}{\alpha\gamma M+L}$ are from 
\eqref{C-1-optimal}. Note that   $M=p(1+(\gamma-1)(n+\tau))$ and $L=(1-\gamma)(n+\tau)$. Furthermore, we have that $$C_{\mathcal G_{\omega_1,A\omega_1}}=A^{-\frac{1}{p}(\frac{M}{p}+L)}C_{\mathcal G_{\omega_1,\omega_1}}=A^{-\frac{1}{p}}C_{\mathcal G_{\omega_1,\omega_1}},$$ see \eqref{C-1-2-constant-initial}, 
and 
\begin{eqnarray*}
	C_{K,L,M,C_0}&=&\left(\frac{1}{1-\gamma}+K\right)^\frac{M}{p}C_0^{L-(1-\gamma)n\left(\frac{M}{p}+L\right)}(\gamma \alpha)^L(1-\gamma)^{\frac{M}{p}+L}\frac{(M+pL)^{\frac{M}{p}+L}}{M^\frac{M}{p}L^L}\\&=&\left(\frac{1}{1-\gamma}+K\right)^\frac{M}{p}C_0^{\tau(1-\gamma)}(\gamma \alpha)^L(1-\gamma)\frac{p}{M^\frac{M}{p}L^L}, 
\end{eqnarray*}
see \eqref{C-1-1-constant-initial}. Since $-\frac{\theta_1}{L}+\theta_1+\frac{1-\theta_1}{\alpha p\gamma}=0$ (being the exponent of $A$), $\frac{M\theta_1}{pL}-\frac{1-\theta_1}{\alpha p\gamma}=0$ (being the exponent of the term $\frac{1}{1-\gamma}+K$), and $C_0^{\tau(1-\gamma)\frac{\theta_1}{L}}(cC_0)^\frac{1-\theta_1}{\alpha p\gamma}=1$ (being equivalent to \eqref{c-C-0}), after a reorganization of the terms it turns out that  
\[
{\sf C}_1\, C_{A,K,C_0}=\left(\frac{p\gamma\alpha}{n+\tau}\right)^{\theta_1}C_{\mathcal G_{\omega_1,\omega_1}}^\frac{\theta_1}{L}=\left(\frac{p\gamma\alpha}{n+\tau}\right)^{\theta_1}\inf_{G\in \mathcal G_{\omega_1,\omega_1}\setminus \{0\};\int_E G\omega_1=1}\frac{\displaystyle\left(\int_E G(y)|y|^{p'}\omega_1(y)dy\right)^\frac{\theta_1}{p'}}{\displaystyle\left(\int_E G^\gamma(y)\omega_1(y)dy\right)^{\frac{\theta_1}{L}}}.
\]
Now, we recognize the latter term that appeared  in the one-weighted case, see relation \eqref{C-representation} ($\omega_1$ being instead of $\omega$); in fact, this term is precisely the optimal constant $\tilde {\sf C}_1$ in  \eqref{eq:Lam1},  i.e., 
\[
	\left(\int_{{\sf Cone}_\Omega} |u|^{\alpha p}\omega_1dx\right)^{\frac{1}{\alpha p}}=
	\tilde {\sf C}_1 \left(\int_{{\sf Cone}_\Omega}|\nabla u|^p\omega_1 dx\right)^{\frac{\theta_1}{p}}\left(\int_{{\sf Cone}_\Omega}|u|^{\alpha p \gamma}\omega_1 dx\right)^{\frac{1-\theta_1}{\alpha p \gamma}}.
\]
By the characterization of the equality case in Theorem \ref{th:Lam}, applied on  ${\sf Cone}_\Omega$, and taking into account that the support of $u$ is in $\Omega\subseteq {\sf Cone}_\Omega\subseteq E$, we have that
\begin{equation}\label{eq:finalfunction-2}
	u(x)=\begin{cases}A(\lambda+|x+x_0|^{p'})^\frac{1}{1-\alpha}, &\text{if $x\in \Omega$},\\
		0& \text{otherwise},
	\end{cases}
\end{equation}
for some constants $A>0$ and $\lambda>0$. By \eqref{eq:finalfunction-2} it follows that 
\[
\Omega={\sf Cone}_\Omega=E;
\]
otherwise, the function $u\neq 0$ would have certain jump discontinuities on the boundary of $\Omega$, contradicting  the fact that $u$ belongs to the Sobolev space $\dot{W}^{p,\alpha}(\omega_1,\omega_2;E)=\dot{W}^{p,\alpha}(\omega_1,\omega_1;E)$,  see  e.g.\ \cite[Theorems 4.17 \& 4.19]{EvansGariepy}. Thus, the above properties  are valid in fact on $E={\sf Cone}_\Omega$.

\underline{Step 7.} (Properties of $x_0$) By \eqref{eq:cweights} and \eqref{omegas=C} we have that $\omega_i(x+x_0)=\omega_i(x)$ for every $x\in E$ and $i\in \{1,2\}$ (and also for $i=3$ whenever $\omega_3$ does not vanish). The further properties of $x_0$ follow in a similar manner as in the one-weighted setting. 
 \hfill$\square$

\begin{remark}\label{rem:PLMS}
	{\rm We mention that the proof of Theorem~\ref{prop:equalitycase-intro} fills a gap in \cite[Theorem~1.3]{BGK_PLMS} when characterizing the equality case. In fact, the vanishing of $\Delta_s\phi$ (proven in Step 1) was missing from that argument, which focuses only to the algebraic relations involving the two weights.}
\end{remark}

\section{Final remarks and examples}\label{sec:Preliminar-0}

In this final section we discuss equivalent conditions for weights appearing in our main results and give examples satisfying our condition (C). 
 
\subsection{Characterization of $p$-concavity} In this subsection we show that the  more involved condition used by Lam \cite{Lam} presented in the introduction  is equivalent to a simpler concavity property of the weight.  

Let us recall that in \cite{Lam} the following condition was used.  Assume that  $E\subseteq \mathbb R^n$ is an open, convex cone, and $\omega$ a homogeneous weight $\omega\colon E\to (0,+\infty)$ of degree $\tau\geq 0$ such that  $1\neq\gamma\geq 1-\frac{1}{n+\tau}$ and satisfying the inequality 
	\begin{equation}\label{Lam-1again}
	\frac{1}{1-\gamma}\left(\frac{\omega(\nabla \varphi(x))}{\omega(x)}\right)^{1-\gamma}({\rm det}(M))^{1-\gamma}\leq \frac{1}{1-\gamma}-(n+\tau)+\frac{ \nabla \omega(x)\cdot\nabla \varphi(x)}{\omega (x)}+{\rm tr}(M),
	\end{equation}
	for  any positive definite symmetric matrix $M$ and locally Lipschitz function $\varphi$ with $\nabla \varphi(x)\in \overline E$ for any $x\in \overline E$ and $\nabla \varphi\cdot  \textbf{n}\leq 0$ on $\partial E$.

The following proposition gives equivalent formulations of the above condition. 

\begin{proposition}\label{Lam-equivalence}
	Let $E\subseteq \mathbb R^n$ be an open convex cone,   $\omega\colon E \to (0,+\infty)$ be a homogeneous weight  with degree $\tau\geq 0$ of class $\mathcal C^1$, and
	$1\neq  \gamma \geq 1-\frac{1}{n+\tau}$. Then the following  statements are equivalent: 
	\begin{itemize}
		\item[(i)] 	$\omega$ is  $\frac{1-\gamma}{1-n(1-\gamma)}$-concave on $E;$
		\item[(ii)] for every $x,y\in E$ one has
		\begin{equation}\label{Lam-3-00}
		\left(\frac{1}{1-\gamma}-n\right)\left(\frac{\omega(y)}{\omega(x)}\right)^\frac{1-\gamma}{1-n(1-\gamma)}\leq \frac{1}{1-\gamma}-(n+\tau)+\frac{ \nabla \omega(x)\cdot y}{\omega (x)};
		\end{equation}
		\item[(iii)] inequality \eqref{Lam-1again} holds. 
	\end{itemize}
\end{proposition}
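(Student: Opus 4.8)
Throughout set $\beta:=\frac{1-\gamma}{1-n(1-\gamma)}$, so that $\frac1\beta=\frac1{1-\gamma}-n$. Under the standing hypothesis $1\neq\gamma\geq 1-\frac1{n+\tau}$ one checks that $1-n(1-\gamma)>0$ (the only degenerate case $1-n(1-\gamma)=0$ forces $\omega$ to be $(+\infty)$-concave, i.e.\ constant, and is trivial), hence $\beta$ is finite and its sign agrees with that of $1-\gamma$: thus $\beta>0$ for $\gamma<1$ and $\beta<0$ for $\gamma>1$. By Remark~\ref{p-concave-monotonicity}, the $\frac{1-\gamma}{1-n(1-\gamma)}$-concavity of $\omega$ means that $g:=\omega^\beta$ is \emph{concave} when $\beta>0$ and \emph{convex} when $\beta<0$. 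The plan is to prove (i)$\Leftrightarrow$(ii) by the first-order (supporting-hyperplane) characterization of concavity, rewritten via homogeneity, and (ii)$\Leftrightarrow$(iii) by feeding the right scalar constant into Lemma~\ref{tr-det-inequality}.

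For (i)$\Leftrightarrow$(ii): since $g$ is $\mathcal C^1$ on the convex set $E$, its concavity (resp.\ convexity) is equivalent to the first-order condition $g(y)\leq g(x)+\nabla g(x)\cdot(y-x)$ (resp.\ with the reversed inequality) for all $x,y\in E$. Inserting $\nabla g(x)=\beta\,\omega^{\beta-1}(x)\nabla\omega(x)$, dividing by $\omega^\beta(x)>0$, and using Euler's identity $\nabla\omega(x)\cdot x=\tau\,\omega(x)$ transforms this into
\[
\left(\tfrac{\omega(y)}{\omega(x)}\right)^{\beta}\leq 1-\beta\tau+\beta\,\tfrac{\nabla\omega(x)\cdot y}{\omega(x)}
\]
(with reversed inequality in the convex case $\beta<0$). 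Multiplying by $\frac1\beta$ — which reverses the direction of the inequality \emph{exactly} when $\beta<0$, i.e.\ exactly in the convex case — and recalling $\frac1\beta=\frac1{1-\gamma}-n$, one arrives at \eqref{Lam-3-00} in both signs of $\beta$. As every manipulation is reversible, (i)$\Leftrightarrow$(ii).

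For (ii)$\Leftrightarrow$(iii): the common thread is the scalar $C:=\left(\frac{\omega(y)}{\omega(x)}\right)^{\beta}>0$, together with the identities $\beta\big(1-n(1-\gamma)\big)=1-\gamma$ and $(1-\gamma)(1+\beta n)=\beta$. For (ii)$\Rightarrow$(iii), fix any admissible $M,\varphi$ and put $y:=\nabla\varphi(x)\in E$. Since $C^{1-n(1-\gamma)}=\left(\frac{\omega(y)}{\omega(x)}\right)^{1-\gamma}$, the left-hand side of \eqref{tr-det} coincides with that of \eqref{Lam-1again}; Lemma~\ref{tr-det-inequality} bounds it by $\left(\frac1{1-\gamma}-n\right)C+{\rm tr}(M)$, and applying (ii) to the pair $(x,y)$ replaces $\left(\frac1{1-\gamma}-n\right)C$ by $\frac1{1-\gamma}-(n+\tau)+\frac{\nabla\omega(x)\cdot y}{\omega(x)}$, which is precisely \eqref{Lam-1again}. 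For the converse (iii)$\Rightarrow$(ii), given arbitrary $x,y\in E$ we test \eqref{Lam-1again} with the linear potential $\varphi(z):=y\cdot z$ (so $\nabla\varphi\equiv y$) and with the \emph{equality-achieving} matrix $M:=C\,I_n$ of Lemma~\ref{tr-det-inequality}. Then ${\rm tr}(M)=nC$, and using $(1-\gamma)(1+\beta n)=\beta$ the left-hand side of \eqref{Lam-1again} collapses to $\frac{1}{1-\gamma}\left(\frac{\omega(y)}{\omega(x)}\right)^{\beta}$; moving the $nC=n\left(\frac{\omega(y)}{\omega(x)}\right)^{\beta}$ term to the left produces exactly \eqref{Lam-3-00}.

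I expect the genuine obstacle to be bookkeeping rather than conceptual. The first subtlety is tracking the sign of $\beta$ across the $\frac1\beta$-multiplication in (i)$\Leftrightarrow$(ii), where the concave/convex dichotomy must synchronize with the flip of the inequality; this works out precisely because $\mathrm{sgn}\,\beta=\mathrm{sgn}(1-\gamma)$. The second is the admissibility of the test function $\varphi(z)=y\cdot z$ in (iii)$\Rightarrow$(ii), i.e.\ that $\nabla\varphi\cdot\mathbf n=y\cdot\mathbf n\leq 0$ on $\partial E$: since $E$ is a convex cone, for $z\in\partial E$ and every $\lambda>0$ the supporting-hyperplane inequality gives $\mathbf n(z)\cdot(\lambda y-z)\leq 0$, and dividing by $\lambda$ and letting $\lambda\to\infty$ yields $\mathbf n(z)\cdot y\leq 0$, as required. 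The exponent identities above are what make $C=(\omega(y)/\omega(x))^{\beta}$ link \eqref{tr-det}, \eqref{Lam-1again} and \eqref{Lam-3-00}, and it is essential that $M=C\,I_n$ is the equality case of Lemma~\ref{tr-det-inequality}, so that testing (iii) with it loses no information.
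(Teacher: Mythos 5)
Your argument in the main case $\gamma>1-\frac1n$ is correct and is essentially the paper's own proof: (i)$\Leftrightarrow$(ii) via the first-order characterization of concavity/convexity of $\omega^{\beta}$ plus Euler's identity $\nabla\omega(x)\cdot x=\tau\omega(x)$; (ii)$\Rightarrow$(iii) via Lemma~\ref{tr-det-inequality} applied with $C=\left(\omega(\nabla\varphi(x))/\omega(x)\right)^{\beta}$; and (iii)$\Rightarrow$(ii) by testing \eqref{Lam-1again} with the linear potential $\varphi(z)=y\cdot z$ and the matrix $M=C I_n$. Your admissibility check $y\cdot\mathbf n\le 0$ on $\partial E$ and all the exponent identities are correct, and coincide with the paper's computations.

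The genuine gap is your treatment of the degenerate case $1-n(1-\gamma)=0$, i.e.\ $\gamma=1-\frac1n$, which the hypotheses do permit (it forces $\tau=0$, since $1-\frac1n\ge 1-\frac1{n+\tau}$ and $\tau\ge 0$). Your parenthetical dismissal inverts the logic: the hypothesis does not ``force $\omega$ to be constant''; rather, in that case statement (i) \emph{asserts} constancy (the $+\infty$-mean convention), and the equivalence with (iii) still has to be proved. Your machinery cannot do it there, because $\beta$ is no longer finite and both of your key computations break down. The paper devotes its Case 1 to exactly this point: testing \eqref{Lam-1again} with $\varphi(x)=x\cdot y$ and $M=cI_n$, $c>0$ arbitrary, the inequality reduces to $n\left(\omega(y)/\omega(x)\right)^{1/n}c\le \frac{\nabla\omega(x)\cdot y}{\omega(x)}+nc$; dividing by $cn$ and letting $c\to\infty$ yields $\omega(y)\le\omega(x)$ for all $x,y\in E$, hence $\omega$ is constant, while the converse direction is just the AM--GM inequality $n(\det M)^{1/n}\le{\rm tr}(M)$. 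You should add this case (handling (i)$\Leftrightarrow$(iii) directly by the above argument), since it is not covered by, and is not a trivial consequence of, the rest of your proof.
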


\begin{proof} 

In the proof, we distinguish two cases, according to the value of $\gamma$: 
	
	\textit{Case 1:}  $\gamma=1-\frac{1}{n}$. 	By $1-\frac{1}{n}=\gamma \geq 1-\frac{1}{n+\tau}$, we obtain that $\tau\leq 0$. By assumption, we have $\tau\geq 0$; so $\tau=0$. In particular, this case  corresponds to  $p=\frac{1-\gamma}{1-n(1-\gamma)}=+\infty$. Thus  (i) is equivalent that $\omega$ is constant. 
	
We shall prove next that in this case \eqref{Lam-1again} is also equivalent to the fact that $\omega$ is constant. 
		For any fixed $y\in E$, let $\varphi(x)=x\cdot y$, $x\in E$; then  $\nabla \varphi(x)=y\in E$. By the convexity of $E$, we clearly have that $\nabla \varphi\cdot \textbf{n}=y\cdot \textbf{n} \leq 0$ on $\partial E$. Let us also choose $M=cI_n$ with $c>0$ arbitrarily. With the above choices, inequality  \eqref{Lam-1again} reduces to 
	\[	
	n\left(\frac{\omega(y)}{\omega(x)}\right)^{1/n}c\leq \frac{ \nabla \omega(x)\cdot y}{\omega (x)}+nc.
	\] 
	Dividing by $cn>0$ and letting $c\to \infty$, we obtain that $\omega(y)\leq \omega(x)$ for every $x,y\in E$; thus $\omega=$constant. 
	Conversely, if $\omega=$constant (thus $\tau=0$) and $\gamma=1-\frac{1}{n}$, inequality \eqref{Lam-1again} trivially holds, reducing to the well known inequality $n({\rm det}(M))^{1/n}\leq {\rm tr}(M).$ 


	\textit{Case 2:} 
	 $1\neq \gamma>1-\frac{1}{n}$. (Clearly, we cannot have $\gamma<1-\frac{1}{n}$ since it would imply $\tau<0$, a contradiction). 
	Note that if $\gamma <1$ then $\frac{1-\gamma}{1-n(1-\gamma)} >0$. Then (i) means that $\omega^{\frac{1-\gamma}{1-n(1-\gamma)}}$ is a concave function. Due to the homogeneity of $\omega$ and Euler's relation $\nabla \omega(x) \cdot x = \tau \omega(x)$ we can easily deduce that (i) and (ii) are equivalent. In the same way , if $\gamma >1$ then it follows that  $\frac{1-\gamma}{1-n(1-\gamma)} <0$ and in this case (i) says that $\omega^{\frac{1-\gamma}{1-n(1-\gamma)}}$ is a convex function. As before we can conclude the equivalence of (i) and (ii).
	
	We are now going to prove  the equivalence between (ii) and (iii).
	We assume the inequality \eqref{Lam-1again} holds. 
	As before, for any fixed $y\in E$, let $\varphi(x)=x\cdot y$, which is a convex function on $E$ and $\nabla \varphi(x)=y$. Thus,  by \eqref{Lam-1again} we have that
	\begin{equation}\label{Lam-2}
	\frac{1}{1-\gamma}\left(\frac{\omega(y)}{\omega(x)}\right)^{1-\gamma}({\rm det}(M))^{1-\gamma}\leq \frac{1}{1-\gamma}-(n+\tau)+\frac{ \nabla \omega(x)\cdot y}{\omega (x)}+{\rm tr}(M).
	\end{equation}
	Now, we fix both $x,y\in E$. 
	Let us choose the positive definite symmetric matrix $M=cI_n$ with $$c=\left(\frac{\omega(y)}{\omega(x)}\right)^\frac{1-\gamma}{1-n(1-\gamma)}>0.$$
	Then (\ref{Lam-2}) reduces to 
	\begin{equation}\label{Lam-3}
	\left(\frac{1}{1-\gamma}-n\right)\left(\frac{\omega(y)}{\omega(x)}\right)^\frac{1-\gamma}{1-n(1-\gamma)}\leq \frac{1}{1-\gamma}-(n+\tau)+\frac{ \nabla \omega(x)\cdot y}{\omega (x)},
	\end{equation}
	which is precisely \eqref{Lam-3-00}. 
	
	%

	%
	
	Conversely, by keeping the above notations, let us assume that \eqref{Lam-3-00} holds and fix an arbitrary positive-definite symmetric matrix $M$. Thus, by  Lemma \ref{tr-det-inequality} and  \eqref{Lam-3-00}  it follows that
	\begin{eqnarray*}
		\frac{1}{1-\gamma}\left(\frac{\omega(y)}{\omega(x)}\right)^{1-\gamma}({\rm det}(M))^{1-\gamma}&=&\frac{1}{1-\gamma}\left[\left(\frac{\omega(y)}{\omega(x)}\right)^\frac{1-\gamma}{1-n(1-\gamma)}\right]^{1-n(1-\gamma)}({\rm det}(M))^{1-\gamma}\\&\leq &  \left(\frac{1}{1-\gamma}-n\right)\left(\frac{\omega(y)}{\omega(x)}\right)^\frac{1-\gamma}{1-n(1-\gamma)}+{{\rm tr}(M)}\\&\leq & \frac{1}{1-\gamma}-(n+\tau)+\frac{ \nabla \omega(x)\cdot y}{\omega (x)}+ {\rm tr}(M),
	\end{eqnarray*}
	which concludes the proof of \eqref{Lam-1again}. \end{proof}
	
	We conclude this part observing that the limit of \eqref{Lam-1again} as $\gamma\to 1$  reduces to 
	\begin{equation}\label{eq:lamgamma1}
	\log\left(\frac{\omega(\nabla\varphi(x))}{\omega(x)}\det M\right)\leq-(n+\tau)+\frac{\nabla \omega(x)\cdot \nabla\varphi(x)}{\omega(x)}+{\rm tr}(M),
	\end{equation}
	where $\varphi$ and $M$ are the same objects as above. Taking this into account, one could state an analog of Proposition \ref{Lam-equivalence} for the limit case $\gamma\to 1.$
	
	\begin{proposition}\label{Lam-equivalence-log}
		Let $E\subseteq \mathbb R^n$ be an open convex cone,   $\omega\colon E \to (0,+\infty)$ be a homogeneous weight  with degree $\tau\geq 0$ and of class $\mathcal C^1$. Then the following  statements are equivalent: 
		\begin{itemize}
			\item[(i)] 	$\omega$ is  log-concave on $E;$
			\item[(ii)] for every $x,y\in E$ one has
			\begin{equation}\label{Lam-3-11}
			\log\left(\frac{\omega(y)}{\omega(x)}\right)\leq -\tau+\frac{\nabla \omega (x)\cdot y}{\omega (x)}
			;
			\end{equation}
			\item[(iii)] inequality \eqref{eq:lamgamma1} holds;
			\item[(iv)] $\omega$ is $\frac{1}{\tau}$-concave. 
		\end{itemize}
	\end{proposition}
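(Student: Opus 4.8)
The plan is to establish the four equivalences by using condition (ii), the pointwise inequality \eqref{Lam-3-11}, as a hub, mirroring the scheme of Proposition \ref{Lam-equivalence} but now in the limiting regime $\gamma\to 1$. Throughout I will repeatedly invoke Euler's identity $\nabla\omega(x)\cdot x=\tau\omega(x)$, valid for any $\tau$-homogeneous weight of class $\mathcal C^1$. First I would prove (i)$\Leftrightarrow$(ii). Since $\omega>0$ is $\mathcal C^1$, the function $h=\log\omega$ is $\mathcal C^1$ as well, and $\omega$ is log-concave exactly when $h$ is concave on the convex cone $E$. The first-order characterization of concavity says that $h$ is concave iff $h(y)\le h(x)+\nabla h(x)\cdot(y-x)$ for all $x,y\in E$; writing $\nabla h=\nabla\omega/\omega$ and substituting $\nabla\omega(x)\cdot x=\tau\omega(x)$ transforms this into precisely \eqref{Lam-3-11}. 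This settles the equivalence for every $\tau\ge 0$ uniformly.

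Next I would treat (ii)$\Leftrightarrow$(iii). For (iii)$\Rightarrow$(ii) I would, as in Case~2 of Proposition \ref{Lam-equivalence}, fix $y\in E$, take the affine map $\varphi(x)=x\cdot y$ (so that $\nabla\varphi\equiv y\in\overline E$ and $\nabla\varphi\cdot\textbf{n}=y\cdot\textbf{n}\le 0$ on $\partial E$ by convexity of $E$) and insert $M=I_n$ into \eqref{eq:lamgamma1}; since $\det I_n=1$ and ${\rm tr}(I_n)=n$, the right-hand side collapses to $-\tau+\nabla\omega(x)\cdot y/\omega(x)$ and the left-hand side to $\log(\omega(y)/\omega(x))$, which is exactly \eqref{Lam-3-11}. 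For the converse (ii)$\Rightarrow$(iii) I would use the elementary matrix inequality $\log\det M\le{\rm tr}(M)-n$ for positive-definite symmetric $M$ (the $\gamma\to 1$ limit of Lemma \ref{tr-det-inequality}, with equality iff $M=I_n$): adding this bound to \eqref{Lam-3-11} applied with $y=\nabla\varphi(x)$ reproduces \eqref{eq:lamgamma1}.

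Finally, for (i)$\Leftrightarrow$(iv) I would pass through (ii). The implication (iv)$\Rightarrow$(i) is immediate from Remark \ref{p-concave-monotonicity}(ii), since $1/\tau\ge 0$. For the quantitative direction I would write the first-order concavity condition for $F=\omega^{1/\tau}$ and, using Euler's identity once more, reduce it to the scale-invariant inequality $\tau\,(\omega(y)/\omega(x))^{1/\tau}\le\nabla\omega(x)\cdot y/\omega(x)$. Setting $t=(\omega(y)/\omega(x))^{1/\tau}$ and $Z=\nabla\omega(x)\cdot y/\omega(x)$, this reads $\tau t\le Z$; the elementary bound $1+\log t\le t$ gives $\tau(1+\log t)\le\tau t\le Z$, i.e. \eqref{Lam-3-11}, so (iv)$\Rightarrow$(ii). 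Conversely, applying \eqref{Lam-3-11} to $\lambda y$ and using the homogeneity $\omega(\lambda y)=\lambda^\tau\omega(y)$ yields $\tau\log(\lambda t)+\tau\le\lambda Z$ for all $\lambda>0$; the choice $\lambda=1/t$ removes the logarithm and recovers $\tau t\le Z$, i.e. the $1/\tau$-concavity of $\omega$.

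The main obstacle is the gap between the geometric-mean nature of log-concavity and the arithmetic-type $1/\tau$-concavity: the AM--GM inequality delivers only one of the two directions for free, and closing the loop crucially exploits the homogeneity of $\omega$ through the rescaling $y\mapsto\lambda y$ together with the fact that $E$ is a cone. A secondary point to address is the degenerate case $\tau=0$, where $1/\tau$ is read as $+\infty$ and both (i) and (iv) must be shown to reduce to ``$\omega$ is constant''; here the same rescaling applied to \eqref{Lam-3-11}, now with $\omega(\lambda y)=\omega(y)$, forces $\log(\omega(y)/\omega(x))\le 0$ as $\lambda\to 0^+$, and by symmetry in $x,y$ this gives constancy, consistent with the convention for $+\infty$-concavity in Remark \ref{p-concave-monotonicity}(i).
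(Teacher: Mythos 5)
Your proposal is correct, and for the equivalences (i)$\Leftrightarrow$(ii)$\Leftrightarrow$(iii) it coincides with the paper's proof: the paper also declares (i)$\Leftrightarrow$(ii) immediate (your spelled-out argument via the first-order concavity criterion for $\log\omega$ plus Euler's identity is exactly what is behind that remark), and it proves (iii)$\Rightarrow$(ii) with the affine test function $\varphi(x)=y\cdot x$ and $M=I_n$, and (ii)$\Rightarrow$(iii) with the bound $\log\det M\le{\rm tr}(M)-n$, precisely as you do. The genuine difference is in (i)$\Leftrightarrow$(iv): the paper does not prove this at all, but simply cites the authors' companion work \cite{BDK}, whereas you give a self-contained elementary argument. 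Your route through (ii) is sound: the first-order criterion for $F=\omega^{1/\tau}$ together with Euler's identity reduces $1/\tau$-concavity to $\tau\,(\omega(y)/\omega(x))^{1/\tau}\le\nabla\omega(x)\cdot y/\omega(x)$; then $1+\log t\le t$ gives (iv)$\Rightarrow$(ii), and the rescaling $y\mapsto\lambda y$ with the optimal choice $\lambda=1/t$ (legitimate since $E$ is a cone) upgrades the logarithmic inequality (ii) back to the power inequality, giving (ii)$\Rightarrow$(iv). Your explicit treatment of the degenerate case $\tau=0$, where both (i) and (iv) must collapse to constancy of $\omega$ via the limit $\lambda\to0^+$, is a point the paper leaves entirely to the cited reference. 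What your approach buys is a fully self-contained proposition; what the paper's citation buys is brevity. The only minor point shared by both your argument and the paper's is that in (ii)$\Rightarrow$(iii) one applies \eqref{Lam-3-11} with $y=\nabla\varphi(x)\in\overline E$ rather than $y\in E$; this is harmless by continuity, but worth a word if you want full rigor.
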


	\begin{proof} 	Again, the equivalence between (i) and (ii) is trivial. We now assume that inequality \eqref{eq:lamgamma1} holds. 
	Choosing $\varphi(x)= y\cdot x $ with $y\in E$, and $M=I_n$, inequality \eqref{eq:lamgamma1} implies precisely \eqref{Lam-3-11}. Conversely, 
	%
	assume  that \eqref{Lam-3-11} holds. Let $M$ be any  positive definite $(n\times n)$-matrix. Using the AM-GM inequality and the fact that $\log x\leq x-1$ for every $x>0,$ it follows that 
	\[
	\log(\det M)\leq {\rm tr}(M)-n.
	\] 
	Given $x,y\in E$,   by \eqref{Lam-3-11} and the latter inequality one has that
	\[
	\begin{aligned}
		\log\left(\frac{\omega(y)}{\omega(x)}\det M\right)=\log\left(\frac{\omega(y)}{\omega(x)}\right)+\log(\det M)\leq  -\tau+\frac{\nabla \omega (x)\cdot y}{\omega (x)}+{\rm tr}(M)-n,
	\end{aligned}
	\]
	which is exactly  inequality \eqref{eq:lamgamma1}.
	%
		Finally, the equivalence between (i) and (iv) can be found in our recent work \cite{BDK}.
\end{proof}

\subsection{Examples of weights satisfying the main condition} In this subsection we provide some classes of weights which satisfy condition (C). 

\begin{example} {\rm (Monomials; $\gamma<1$)} \rm 
Let $p>1$. Let $\max\{1-1/n, 1/{p'}\}<\gamma<1$ and  $\tau_i,\alpha_i\geq 0$, $i=1,...,n$; $\tau=\tau_1+...+\tau_n$ and $\alpha=\alpha_1+...+\alpha_n$ be such that
	\begin{equation}\label{assumptions-2-0}
	\frac{\alpha}{p}+ {\tau}\left(\frac{1}{p'}-\gamma\right)\leq 1-n(1-\gamma)	\ \ {\rm and}\ \  \beta_i:=\frac{\alpha_i}{p}+ {\tau_i}\left(\frac{1}{p'}-\gamma\right)\geq 0,\ \ i=1,...,n.
	\end{equation}
Let 
\begin{equation}\label{monom-cond}
\delta_i:=\frac{\tau_i}{p'}+\frac{\alpha_i}{p},\ \ 	i=1,...,n. 
\end{equation}
By definition $\delta_i\geq 0$, 	with  the property that if $\delta_i= 0$ for some $i\in \{1,...,n\}$ then clearly $\tau_i=\alpha_i=0$. We consider the convex cone  
	\begin{equation}\label{E-set-0}
		E=\left\{x=(x_1,...,x_n)\in \mathbb R^n: x_i>0 \ {\rm whenever}\  \delta_i> 0\right\},
	\end{equation} and the weights $\omega_1(x)=x_1^{\tau_1}\cdots x_n^{\tau_n},  $ $\omega_2(x)=x_1^{\alpha_1}\cdots x_n^{\alpha_n}$ and $\omega_3(x)=x_1^{\delta_1}\cdots x_n^{\delta_n}$ for every  $x\in E.$ 
	One can prove that the triplet $(\omega_1,\omega_2,\omega_3)$  satisfies   condition (C) on $E$ with 
	\[C_0=\prod_{i=1}^n\left(\frac{\beta_i}{(1-\gamma)\delta_i}\right)^\frac{\beta_i}{1-n(1-\gamma)} \ \ {\rm and}\ \  K=-\frac{1}{1-\gamma}+C_0\left(-n-\tau+\frac{1}{1-\gamma}\left(1+\frac{\tau-\alpha}{p}\right)\right),
	\]
	with the convention $0^0=1$.
Indeed, by using the above choices and the generalized AM-GM inequality, for every $x=(x_1,\dots,x_n)\in E$ and $y=(y_1,\dots,y_n)\in E$ we have 
	\begin{eqnarray*}
	LHS &\coloneqq&	\left(\frac 1{1-\gamma}-n\right)\left(\frac{\omega_2^{\frac 1p}(y)\omega_1^{\frac1{p'}-\gamma}(y)}{\omega_2^{\frac np(1-\gamma)}(x)\omega_1^{\left(1-\frac np\right)(1-\gamma)}(x)}\right)^{\frac1{1-n(1-\gamma)}}\\&=&	\frac {1-n(1-\gamma)}{1-\gamma}\left(\frac{\omega_2(x)}{\omega_1(x)}\right)^\frac{1}{p}\left(\frac{\omega_2^{\frac 1p}(y)\omega_1^{\frac1{p'}-\gamma}(y)}{\omega_2^{\frac 1p}(x)\omega_1^{\frac1{p'}-\gamma}(x)}\right)^{\frac1{1-n(1-\gamma)}}\\&=&C_0
 {(1-n(1-\gamma))}\left(\frac{\omega_2(x)}{\omega_1(x)}\right)^\frac{1}{p}
 \left(\frac{1}{1-\gamma}\right)^{1-\frac{\sum_{i=1}^n\beta_i}{1-n(1-\gamma)}}\prod_{i=1}^n\left(\frac{\delta_i}{\beta_i}\frac{y_i}{x_i}\right)^{\frac{\beta_i}{1-n(1-\gamma)}}\\&\leq& 
 C_0
 {(1-n(1-\gamma))}\left(\frac{\omega_2(x)}{\omega_1(x)}\right)^\frac{1}{p}
 \left(\frac{1}{1-\gamma}\left({1-\frac{\sum_{i=1}^n\beta_i}{1-n(1-\gamma)}}\right)+{\frac{1}{1-n(1-\gamma)}}\sum_{i=1}^n {\delta_i}\frac{y_i}{x_i}\right)\\&=& 
 \left(\frac 1{1-\gamma}+K\right)\frac{\omega_3(x)}{\omega_1(x)}+C_0\frac{\nabla(\omega_2^{\frac 1p}(x)\omega_1^{\frac{1}{p'}}(x))}{\omega_1(x)}\cdot y.
	\end{eqnarray*}
	As expected, when the three weights are equal, it turns out that $C_0=1$ and $K=-n-\tau$, where $\tau$ is the common degree of homogeneity of the weights.
	
		
\end{example}

\begin{example} {\rm ($\gamma>1$)} \rm 
	Let $E\subseteq \mathbb R^n$ be an open convex cone, and  $\omega_i\colon E\to (0,+\infty)$ be homogeneous weights  with degree $\tau_i\geq 0$ and of class $\mathcal C^1$, $i\in \{1,2,3\}$. Assume that $\gamma>\max\{1,\frac{\tau_3}{\tau_1}\}$, $\tau_3=\frac{\tau_2}{p}+\frac{\tau_1}{p'}$  and 
	\[
	\begin{aligned}
		&\inf_{x,y\in E\cap \mathbb S^{n-1}} \frac{\omega_1(x)}{\omega_3(x)}\cdot \left(\frac{\omega_2^{\frac 1p}(y)\omega_1^{\frac1{p'}-\gamma}(y)}{\omega_2^{\frac np(1-\gamma)}(x)\omega_1^{\left(1-\frac np\right)(1-\gamma)}(x)}\right)^{\frac1{1-n(1-\gamma)}}>0\\
		&\inf_{x,y\in E\cap \mathbb S^{n-1}}\frac{\nabla(\omega_2^{\frac 1p}(x)\omega_1^{\frac{1}{p'}}(x))}{\omega_3(x)}\cdot y>0.
	\end{aligned}
	\]
	Then the  triplet $(\omega_1,\omega_2,\omega_3)$  satisfies   condition {\rm (C)} on $E$ for some $K\in \mathbb R$ and $C_0>0$.
Indeed, since $\gamma>1$, rearranging \eqref{eq:conditionCalt}, we can see that it is enough to show that the function
	\[
	F(x,y)\coloneqq \left(\frac{1}{\gamma-1}+n\right)\frac{\omega_1(x)}{\omega_3(x)}\cdot \left(\frac{\omega_2^{\frac 1p}(y)\omega_1^{\frac1{p'}-\gamma}(y)}{\omega_2^{\frac np(1-\gamma)}(x)\omega_1^{\left(1-\frac np\right)(1-\gamma)}(x)}\right)^{\frac1{1-n(1-\gamma)}}+C_0\frac{\nabla(\omega_2^{\frac 1p}(x)\omega_1^{\frac{1}{p'}}(x))}{\omega_3(x)}\cdot y,
	\]
	has a positive lower bound on $E^2$. Consider $x_0,y_0\in E\cap \mathbb S^{n-1}$. Then, by assumption, we can find two constants $A, B>0$ such that
	\[
	\begin{aligned}
		F(\lambda x_0, \sigma y_0)\geq A \frac{\sigma^{[\frac{\tau_2}{p}+\tau_1(\frac1{p'}-\gamma)]\frac1{1-n(1-\gamma)}}}{\lambda^{[\tau_2\frac np(1-\gamma)+\tau_1(1-\frac{n}{p})(1-\gamma)]\frac1{1-n(1-\gamma)}+\tau_3-\tau_1}}+B\lambda^{\frac{\tau_2}p+\frac{\tau_1}{p'}-1-\tau_3}\sigma.
	\end{aligned}
	\]
	Since $\tau_3=\frac{\tau_2}{p}+\frac{\tau_1}{p'}$, a simple computation shows that 
%
	\[
	F(\lambda x_0,\sigma y_0)\geq A\left(\frac\lambda\sigma\right)^{[\tau_1(\gamma-\frac1{p'})-\frac{\tau_2}p]\frac1{1-n(1-\gamma)}}+B \left(\frac\sigma\lambda\right),
	\]
	so  $F$ has a positive lower bound whenever 
	$
	\tau_1\left(\gamma-\frac1{p'}\right)-\frac{\tau_2}{p}>0,
	$
	which is equivalent to our assumption $\gamma> \frac{\tau_3}{\tau_1}$.
\end{example}

\begin{example} {\rm ($\gamma\neq 1$)} \rm 
Let $p>1$, $1\neq \gamma\geq 1-\frac{1}{n}$,  $E\subseteq \mathbb R^n$ be an open convex cone, and  $\omega_i\colon E\to (0,+\infty)$ be homogeneous weights  with degree $\tau_i\geq 0$ and of class $\mathcal C^1$, $i\in \{1,2,3\}$. Assume in addition that: 
\begin{itemize}
	\item[(i)] $\omega_2^\frac{1}{p}\omega_1^{\frac{1}{p'}-\gamma}$ is $\frac{1-\gamma}{1-n(1-\gamma)}$-concave on $E;$ 
	\item[(ii)] if $\gamma >1$ then $1>(1-\gamma)\left(n+\frac{\tau_1}{p}+ {\tau_2}\left(\frac{1}{p'}-\gamma\right)\right);$
	\item[(iii)] $\omega_3\geq \omega_2^\frac{1}{p}\omega_1^{\frac{1}{p'}}$ on $E;$
	\item[(iv)] $\nabla \omega_1(x)\cdot y\geq 0$ for every $x,y\in E.$
\end{itemize}
Using Proposition \ref{Lam-equivalence} and some simple estimates, one can prove that  the triplet $(\omega_1,\omega_2,\omega_3)$  satisfies   condition (C) on $E$ with $C_0=1$ and $K=-n-\frac{\tau_1}{p}- {\tau_2}\left(\frac{1}{p'}-\gamma\right)$. The details are left to the interested reader. 
\end{example}

\noindent {\bf Acknowledgment.} 
The authors thank the anonymous Referee for all valuable comments and suggestions that significantly improved the presentation of the paper.

\end{document}